\def\Cl#1{\ensuremath{{\mathcal {#1}}}}
\def\z#1{\ensuremath{{{#1}}^{\ZZ}}}
\def\pv#1{\ensuremath{{\mathsf{#1}}}}
\def\Om#1#2{\ensuremath{\overline\Omega_{#1}{\pv{#2}}}}
\def\KerS#1{\ensuremath{\mathop{\rm Ker}{#1}}}
\def\ZZ{\ensuremath{\mathbb{Z}}}
\let\ov=\overline
\def\li#1{\ensuremath{\overleftarrow{#1}.\overrightarrow{#1}}}
\def\si#1{\ensuremath{\Sigma({\Cl #1})}}
\def\ori#1{\ensuremath{\overrightarrow{#1}}}
\def\ole#1{\ensuremath{\overleftarrow{#1}}}
\def\Mir{\ensuremath{{\operatorname{\Cl {M}}}}}
\newtheorem{Thm}{Theorem}[section]
\newtheorem{Prop}[Thm]{Proposition}
\newtheorem{Lemma}[Thm]{Lemma}
\newtheorem{Cor}[Thm]{Corollary}
\theoremstyle{remark}
\newtheorem{Rmk}[Thm]{Remark}}
\numberwithin{equation}{section}
\begin{document}

\title[A geometric interpretation of the Sch\"utzenberger group]{A geometric interpretation of the Sch\"utzenberger group
  of a minimal subshift}

\thanks{ %
  Work partially supported respectively by CMUP (UID/MAT/00144/2013)
  and CMUC (UID/MAT/00324/2013), which are funded by FCT (Portugal)
  with national (MEC) and European structural funds through the
  programs FEDER, under the partnership agreement PT2020.}

\author{Jorge Almeida}
\address{CMUP, Departamento de Matem\'atica,
     Faculdade de Ci\^encias, Universidade do Porto, 
 Rua do Campo Alegre 687, 4169-007 Porto, Portugal.}
\email{jalmeida@fc.up.pt}

\author{Alfredo Costa}
\address{CMUC, Department of Mathematics, University of Coimbra,
  3001-501 Coimbra, Portugal.}
\email{amgc@mat.uc.pt}

\subjclass[2010]{Primary 20M05. Secondary 20E18, 37B10, 57M05, 20M50}

\keywords{free profinite semigroup, profinite group, irreducible
  subshift, minimal subshift, Rauzy graph, return word, fundamental
  group, fundamental groupoid}

\begin{abstract}
  The first author has associated in a natural way a profinite group
  to each irreducible subshift. The group in question was initially
  obtained as a maximal subgroup of a free profinite semigroup. In the
  case of minimal subshifts, the same group is shown in the present
  paper to also arise from geometric considerations involving the
  Rauzy graphs of the subshift. Indeed, the group is shown to be
  isomorphic to the inverse limit of the profinite completions of the
  fundamental groups of the Rauzy graphs of the subshift. A further
  result involving geometric arguments on Rauzy graphs is a criterion
  for freeness of the profinite group of a minimal subshift based on
  the Return Theorem of Berth\'e et.~al.
\end{abstract}

\maketitle


\section{Introduction}

The importance of (relatively) free profinite semigroups in the study
of pseudovarieties of finite semigroups is well established since the
1980's, which provides a strong motivation to understand their
structure. The algebraic-topological structure of free profinite
semigroups is far more complex than that of free semigroups. For
instance, Rhodes and Steinberg showed that the (finitely generated)
projective profinite groups are precisely the closed subgroups of
(finitely generated) free profinite
semigroups~\cite{Rhodes&Steinberg:2008}.

In the last decade, a connection introduced by the first author with
the research field of symbolic dynamics provided new insight into the
structure of free profinite semigroups, notably in what concerns their
maximal subgroups~\cite{Almeida:2005c,Almeida:2003b,Almeida:2003cshort}.
This connection is made via the languages of finite blocks of
symbolic dynamical systems, also known as
subshifts~\cite{Lind&Marcus:1996}. In symbolic dynamics,
irreducible subshifts deserve special attention: they are the
ones which have a dense forward orbit. For each irreducible subshift
$\Cl X$ over a finite alphabet $A$, one may consider the topological
closure in the $A$-generated free profinite semigroup $\Om AS$ of the
language of finite blocks of $\Cl X$. This closure is a union of \Cl
J-classes, among which there is a minimum one, $J(\Cl X)$, in the \Cl
J-ordering \cite{Almeida&ACosta:2007a}. The \Cl J-class $J(\Cl X)$
contains (isomorphic) maximal subgroups, which, as an abstract
profinite group, the authors called in~\cite{Almeida&ACosta:2013} the
\emph{Sch\"utzenberger group of $\Cl X$}, denoted $G(\Cl X)$.

The approach used in~\cite{Almeida:2005c,Almeida&ACosta:2013} consists
in obtaining information about $G(\Cl X)$ using ideas, results and
techniques borrowed from the theory of symbolic dynamical systems. The
minimal subshifts, considered in those papers, are precisely the
subshifts \Cl X for which the \Cl J-class $J(\Cl X)$ consists of
$\mathcal J$-maximal regular elements of~$\Om AS$
\cite{Almeida:2005c}.

The subshifts considered in~\cite{Almeida:2005c,Almeida&ACosta:2013}
are mostly substitutive
systems~\cite{Queffelec:1987,Fogg:2002},
that is, subshifts
defined by (weakly) primitive substitutions. Substitutive subshifts
are minimal subshifts which are described by a finite computable
amount of data, which leads to various decision problems. The authors
showed in~\cite{Almeida&ACosta:2013} how to compute from a primitive
substitution a finite profinite presentation of the Sch\"utzenberger
group of the subshift defined by the substitution, and used this to
show that it is decidable whether or not a finite group is a
(continuous) homomorphic image of the subshift's Sch\"utzenberger
group. The first examples of maximal subgroups of free profinite
semigroups that are not relatively free profinite groups were also
found as Sch\"utzenberger groups of substitutive
systems~\cite{Almeida:2005c,Almeida&ACosta:2013}.

The Sch\"utzenberger group of the full shift $A^{\mathbb Z}$ is
isomorphic to the maximal subgroups of the minimum ideal of $\Om AS$
and was first identified in~\cite{Steinberg:2009}, with techniques
that were later extended to the general sofic case
in~\cite{ACosta&Steinberg:2011} taking into account the invariance of
$G(\Cl X)$ under conjugacy of symbolic dynamical
systems~\cite{Costa:2006}. This led to the main result
of~\cite{ACosta&Steinberg:2011} that $G(\Cl X)$ is a free profinite
group with rank $\aleph_0$ when $\Cl X$ is a non-periodic irreducible
sofic subshift.\footnote{Note that the minimal sofic subshifts are the
  periodic ones.} From the viewpoint of the structure of the group
$G(\Cl X)$, the class of irreducible sofic subshifts is thus quite
different from that of substitutive (minimal) subshifts.

Substitutive systems are a small part of the realm of minimal
subshifts, in the sense that substitutive systems have zero entropy
\cite{Queffelec:1987}, while there are minimal subshifts of entropy
arbitrarily close to that of the full shift
\cite{Damanik&Solomyak:2002}. Therefore, it would be interesting to
explore other techniques giving insight on the Sch\"utzenberger group
of arbitrary minimal subshifts. That is one of the main purposes of
this paper. We do it by exploring the Rauzy graphs of subshifts, a
tool that has been extensively used in the theory of minimal
subshifts. For each subshift~$\Cl X$ and integer $n$, the Rauzy graph
$\Sigma_n(\Cl X)$ is a De Bruijn graph where the vertices (words of
length~$n$) and edges (words of length~$n+1$) not in the language of
the subshift have been removed. This graph is connected if $\Cl X$ is
irreducible. In the irreducible case, we turn our attention to the
profinite completion $\hat\Pi_n(\Cl X)$ of the fundamental group of
$\Sigma_n(\Cl X)$. The subshift $\Cl X$ can be seen in a natural way
as an inverse limit of the graphs of the form $\Sigma_{2n}(\Cl X)$.
The main result of this paper
(Corollary~\ref{c:the-geometric-interpretation-reformulation}) is that
the induced inverse limit of the profinite groups $\hat\Pi_{2n}(\Cl
X)$ is $G(\Cl X)$, provided $\Cl X$ is minimal. We leave as an open
problem whether this result extends to arbitrary irreducible
subshifts.

The study of Rauzy graphs of a minimal subshift often appears
associated with the study of sets of return words, as in the proof of
the \emph{Return Theorem}
in~\cite{Berthe&Felice&Dolce&Leroy&Perrin&Reutenauer&Rindone:2015}. We
apply the Return Theorem, together with a technical result on return
words giving a sufficient condition for freeness of the
Sch\"utzenberger group of a minimal subshift, to show that if the
minimal subshift involves $n$ letters and satisfies the so-called tree
condition~\cite{Berthe&Felice&Dolce&Leroy&Perrin&Reutenauer&Rindone:2015},
then its Sch\"utzenberger group is a free profinite group of rank $n$
(Theorem~\ref{t:tree-are-free}). This result was obtained
in~\cite{Almeida:2005c} for the important special case of Arnoux-Rauzy
subshifts, with a different approach: the result was there first
proved for substitutive Arnoux-Rauzy subshifts, and then extended to
arbitrary Arnoux-Rauzy subshifts using approximations by substitutive
subshifts.

\section{Profinite semigroups, semigroupoids, and groupoids}

\subsection{Free profinite semigroups}

We refer to~\cite{Almeida:2003cshort}
as a useful introductory text about the theory of profinite
semigroups. In~\cite{Almeida:2002a} one finds an introduction to the
subject via the more general concept of profinite algebra.
We use the notation $\Om AS$ for the free profinite
semigroup generated by the set~$A$.
Recall that $\Om AS$ is
a profinite semigroup in which $A$ embeds and which is
characterized by the property that every continuous mapping
$\varphi\colon A\to S$ into a profinite semigroup $S$ extends
in a unique way to a continuous semigroup homomorphism
$\hat\varphi\colon \Om AS\to S$.
Replacing the word ``semigroup'' by ``group'', we
get the characterization of the free profinite group with basis~$A$,
which we denote by $\Om AG$. We shall use frequently the fact that
the discrete subsemigroup of $\Om AS$ generated by $A$ is the free
semigroup $A^+$, and that its elements are the isolated elements of
$\Om AS$ (for which reason the elements of~$A^+$ are said to
be \emph{finite}, while
those in the subsemigroup $\Om AS\setminus A^+$ are \emph{infinite}).
The free group generated by $A$, denoted $FG(A)$,
also embeds naturally into $\Om AG$, but its elements are not isolated.

\subsection{Free profinite semigroupoids}

Except stated otherwise,
by a \emph{graph} we mean a directed graph with possibly multiple edges.
Formally: for us a graph is a pair of disjoint sets $V$, of \emph{vertices},
and $E$, of \emph{edges}, together with two incidence maps
$\alpha$ and $\omega$ from $E$ to $V$, the \emph{source} and the \emph{target}.
An edge $s$ with source $x$ and target $y$
will sometimes be denoted $s\colon x\to y$.
Recall that a \emph{semigroupoid} is a graph endowed with a partial
associative operation, defined on consecutive edges
(cf.~\cite{Tilson:1987,Jones:1996,Almeida&Weil:1996}):
for $s\colon x\to y$ and $t\colon y\to z$, their composite is an edge
$st$ such that $st\colon x\to z$.
Alternatively, a semigroupoid may be seen a small category
where some local identities are possibly missing.

Semigroups can be seen as being the one-vertex semigroupoids.
If the set of loops of the semigroupoid $S$ rooted at a vertex $c$ 
is nonempty, then, for the composition law,
it is a semigroup (for us an empty set is not a semigroup), the  \emph{local semigroup of $S$ at~$c$}, denoted $S(c)$.

The
theory of topological/profinite semigroups inspires a
theory of topological/profinite semigroupoids, but
as seen in~\cite{Almeida&ACosta:2007a},
there are some
differences which have to be taken into account, namely
in the case of semigroupoids with an infinite
number of vertices.
To begin with, the very definition of profinite
semigroupoid is delicate. We use the following definition: a
compact semigroupoid $S$ is \emph{profinite} if, for every pair $u,v$ of distinct
elements of $S$, there is a continuous semigroupoid homomorphism
$\varphi\colon S\to F$ into a finite semigroupoid such that
$\varphi(u)\neq\varphi(v)$. There is an unpublished example due to
G.~Bergman (mentioned in \cite{Rhodes&Steinberg:2002}) of an
infinite-vertex semigroupoid that is profinite according to this
definition, but that is not an inverse limit of finite
semigroupoids. On the other hand, it is known that
a topological graph $\Gamma$ is an inverse limit of
finite graphs if and only if for every $u,v\in\Gamma$ there is a
continuous homomorphism of graphs $\varphi\colon \Gamma\to F$ into a
finite graph~$F$ such that $\varphi(u)\neq\varphi(v)$
(see~\cite{Ribes:1977} for a proof), in which case
$\Gamma$ is said to be profinite.

For another delicate feature of infinite-vertex profinite
semigroupoids,
let~$\Gamma$ be a subgraph of a
topological semigroupoid~$S$,
and let $\lceil \Gamma\rceil$
be the \emph{closed subsemigroupoid of $S$
generated by~$\Gamma$}, that is, $\lceil \Gamma\rceil$
is the intersection of all closed subsemigroupoids of $S$
that contain $\Gamma$. If $S$ has a finite number of vertices, then
$\lceil \Gamma\rceil$ is the topological closure
$\overline{\langle \Gamma\rangle}$
of the discrete
subsemigroupoid $\langle \Gamma\rangle$ of $S$ generated by $\Gamma$.
But if $S$ has an infinite number of vertices, then
$\overline{\langle \Gamma\rangle}$ may not be a semigroupoid and thus it is
strictly contained in $\lceil \Gamma\rceil$~\cite{Almeida&ACosta:2007a}.
If~$\Gamma$ is a profinite graph, then
the
\emph{free profinite semigroupoid generated by~$\Gamma$},
denoted $\Om {\Gamma}{Sd}$,
is a profinite semigroupoid, in which $\Gamma$ embeds as a closed
subgraph, characterized by the property that
every continuous graph homomorphism 
$\varphi\colon \Gamma\to F$ into a finite semigroupoid~$F$ extends
in a unique way to a continuous semigroupoid
homomorphism
$\hat\varphi\colon \Om {\Gamma}{Sd}\to F$.
It turns out that $\lceil \Gamma\rceil=\Om {\Gamma}{Sd}$.
The construction of
$\Om {\Gamma}{Sd}$ is given in~\cite{Almeida&ACosta:2007a} (where some
problems with the construction given in~\cite{Almeida&Weil:1996} are discussed), and
consists in a reduction to the case where $\Gamma$ is finite,
previously treated in~\cite{Jones:1996}.

The free semigroupoid generated by $\Gamma$,
denoted $\Gamma^+$,
is the graph whose vertices are those of $\Gamma$,
and whose edges are the paths of
$\Gamma$ with the obvious composition and incidence laws. The
semigroupoid $\Gamma^+$ embeds naturally in $\Om {\Gamma}{Sd}$, with its elements being topologically
isolated in $\Om {\Gamma}{Sd}$. Moreover, if $\Gamma$ is an inverse
limit $\varprojlim \Gamma_i$ of finite graphs, then
$\Gamma^+=\varprojlim \Gamma_i^+$~\cite{Almeida&ACosta:2007a}.
Also, one has a natural embedding of
$\Om {\Gamma}{Sd}$ in $\varprojlim \Om {\Gamma_i}{Sd}$~\cite{Almeida&ACosta:2007a}.
A problem that we believe remains open and is studied in~\cite{Almeida&ACosta:2007a}, is whether there exists
some example where $\Om {\Gamma}{Sd}\neq \varprojlim \Om
{\Gamma_i}{Sd}$.

Everything we said about semigroupoids has an analog for categories.
We shall occasionally
invoke the free category $\Gamma^\ast$, obtained from $\Gamma^+$ by
adding an empty path $1_v$ at each vertex $v$.

\subsection{Profinite completions of finite-vertex semigroupoids}

A congruence on a semigroupoid $S$ is an equivalence relation
$\theta$ on the set of edges of~$S$
such that $u\mathrel{\theta}v$ implies that
$u$ and $v$ are \emph{coterminal} (that is, they
have the same source and the same target), and
also that $xu\mathrel{\theta} xv$
and $uy\mathrel{\theta} vy$ whenever the products $xu,xv,uy,vy$ are defined.
The quotient $S/{\theta}$ is the semigroupoid with the same set of
vertices of $S$ and edges the classes $u/{\theta}$ with the natural
incidence and composition laws.  The relation that
identifies coterminal edges is a congruence.
Therefore,
if~$S$ has a finite number of vertices, the set $\Lambda$ of congruences
on $S$ such that $S/{\theta}$ is finite is nonempty.
Note that if the congruences $\theta$ and $\rho$
are such that $\theta\subseteq \rho$, then one has a natural semigroupoid
homomorphism $S/{\theta}\to S/{\rho}$. Hence, when $S$ has a
finite number of vertices, we may consider the inverse limit $\hat
S=\varprojlim_{\theta\in\Lambda} S/{\theta}$, which is a profinite
semigroupoid, called the \emph{profinite completion} of $S$.
Let $\iota$ be the natural mapping $S\to\hat S$.
Then $\iota(S)$ is a dense subsemigroupoid of $\hat S$ and $\hat S$ has the property that for every continuous semigroupoid
homomorphism $\varphi$ from $S$ into a profinite semigroupoid $T$
there is a unique continuous semigroupoid homomorphism
$\hat\varphi\colon \hat S\to T$
such that $\hat\varphi\circ\iota=\varphi$~\cite{Jones:1996}.
If $\Gamma$ is a finite-vertex graph, then $\Om {\Gamma}{Sd}$ is the
profinite completion of the free semigroupoid $\Gamma^+$~\cite{Jones:1996}.

\subsection{Profinite groupoids}

A groupoid is a (small) category in which every morphism
has an inverse.
The parallelism between the definitions of profinite semigroups and
profinite groups carries on to an obvious parallelism between
the definitions of topological/profinite semigroupoids and
topological/profinite groupoids.
As groupoids are special cases
of semigroupoids some care is
sometimes needed  when relating
corresponding concepts. The next lemma addresses one of such
situations. For its proof, recall the well known fact that
if $t$ is an element of a compact
semigroup $T$, then
the closed subsemigroup $\overline {\langle t\rangle}$
has a unique
idempotent,
denoted $t^\omega$; in case $T$ is profinite, one has
$t^\omega=\lim t^{n!}$~\cite{Almeida:2002a}.
The inverse of $t\cdot t^\omega$ in
 the maximal subgroup of~$\overline {\langle t\rangle}$
 is denoted $t^{\omega-1}$.

  \begin{Lemma}\label{l:profinite-groupoid-vs-semigroupoid-generated}
    Let $G$ be a compact groupoid and
    suppose that $A$ is a strongly connected subgraph that generates
    $G$ as a topological groupoid. Then $A$ also generates $G$ as a
    topological semigroupoid.
  \end{Lemma}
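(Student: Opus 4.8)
The plan is to show that the closed subsemigroupoid $\lceil A\rceil$ of $G$ generated by $A$ is already closed under the inversion of $G$. Granting this, $\lceil A\rceil$ will be a closed subsemigroupoid of $G$ that is stable under inverses and contains $A$; since for any $s\colon x\to y$ in $\lceil A\rceil$ we then also have $1_x=s\,s^{-1}\in\lceil A\rceil$, and strong connectivity guarantees at least one edge incident to each vertex, $\lceil A\rceil$ contains all the identities $1_v$ and is therefore a closed subgroupoid of $G$ containing $A$. As $A$ generates $G$ as a topological groupoid, this forces $\lceil A\rceil=G$, which is the assertion. Thus everything reduces to the single claim that $s^{-1}\in\lceil A\rceil$ for every edge $s\in\lceil A\rceil$.

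To establish this claim, I would fix $s\colon x\to y$ in $\lceil A\rceil$ and use strong connectivity to choose a finite path $p\colon y\to x$ in $A$. Being a path in $A$, it lies in $A^+\subseteq\lceil A\rceil$. Then $t:=sp$ is a \emph{loop} at $x$, so it belongs to the local semigroup $\lceil A\rceil(x)$, which is a closed subsemigroup of the local group $G(x)$. The idea is to recover the groupoid inverse $s^{-1}$ by composing the honestly finite path $p$ with a limit of powers of the loop $t$, the latter being available inside $\lceil A\rceil$ by the compact-semigroup machinery recalled before the statement.

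The crux is that $G(x)$ is a \emph{group}, so its only idempotent is $1_x$. Since $t^\omega$ is the unique idempotent of $\overline{\langle t\rangle}\subseteq G(x)$, we get $t^\omega=1_x$ and hence $t\,t^\omega=t$. By definition $t^{\omega-1}$ is the inverse of $t\,t^\omega=t$ in the maximal subgroup of $\overline{\langle t\rangle}$; as this inverse is in particular an inverse of $t$ in the group $G(x)$, uniqueness of inverses gives $t^{\omega-1}=t^{-1}$. Therefore $t^{-1}=t^{\omega-1}\in\overline{\langle t\rangle}\subseteq\lceil A\rceil$. Finally, writing $t^{-1}=(sp)^{-1}=p^{-1}s^{-1}$ and composing on the left with $p\in\lceil A\rceil$, and using $p\,p^{-1}=1_y$, we obtain $p\,t^{-1}=p\,p^{-1}s^{-1}=s^{-1}$, so that $s^{-1}\in\lceil A\rceil$, as needed.

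The main obstacle is conceptual rather than computational: an edge $s$ that is not a loop cannot be iterated, so there is no direct analogue of $\lim t^{n!}$ for $s$ itself. Strong connectivity is precisely what allows me to close $s$ into a loop $t=sp$ on which $t^\omega$ and $t^{\omega-1}$ make sense, while the fact that only the finite path $p$ (and never $p^{-1}$) is fed into the composite $p\,t^{-1}$ is what keeps the argument free of circularity. I would also verify the degenerate situations (a vertex with no incident edges, or a one-vertex graph) to make sure the identities $1_v$ are correctly accounted for, though these are routine.
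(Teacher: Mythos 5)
Your overall strategy---show that $\lceil A\rceil$ is closed under inversion, deduce that it is a closed subgroupoid containing $A$, and conclude $\lceil A\rceil=G$ by minimality---is the same as the paper's, and your local computation is sound and essentially identical to the paper's: in the local group $G(x)$ the only idempotent is $1_x$, so $t^\omega=1_x$ and $t^{\omega-1}=t^{-1}$ for a loop $t$ at $x$, and composing recovers $s^{-1}$ (the paper phrases this as $u(su)^{\omega-1}=s^{-1}$). However, there is a genuine gap at the crucial step ``fix $s\colon x\to y$ in $\lceil A\rceil$ and use strong connectivity to choose a finite path $p\colon y\to x$ in $A$.'' Strong connectivity of $A$ only provides paths between \emph{vertices of $A$}, and the endpoints $x,y$ of an edge of $\lceil A\rceil$ need not be vertices of $A$: in a compact groupoid with infinitely many vertices, $\lceil A\rceil$ contains accumulation points of nets of paths of $A$, and the endpoints of such edges lie only in the closure $\overline{V_A}$, which can strictly contain $V_A$. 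This is exactly the infinite-vertex subtlety that this section of the paper repeatedly warns about, and the lemma is stated for arbitrary compact groupoids, not finite-vertex ones.

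The paper's proof devotes its entire first half to this point: it first shows $\overline{V_A}=V_G$ (the full subgraph of $G$ on $\overline{V_A}$ is a closed subgroupoid containing $A$, hence equals $G$), and then, for an edge $s$ of an arbitrary closed subsemigroupoid $S\supseteq A$, it chooses nets $(a_i)$ and $(b_j)$ of vertices of $A$ converging to $\alpha(s)$ and $\omega(s)$, paths $u_{i,j}$ in $A$ from $b_j$ to $a_i$, and an accumulation point $u$ of the net $(u_{i,j})$; this $u$ belongs to $S$ (which is closed and contains all paths of $A$) and joins $\omega(s)$ to $\alpha(s)$, after which your computation goes through with $u$ in place of your finite path $p$. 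As written, your argument is complete only under the additional hypothesis that every vertex of $G$ is a vertex of $A$---for instance when $G$ has finitely many vertices, as in the paper's application in Lemma~\ref{l:projection-onto-fundamental-groupoid-for-strongly-connected-semigroupoids}---and the degenerate cases you propose to check at the end (isolated vertices, one-vertex graphs) are not where the difficulty lies. To repair the proof you must either replace $p$ by an accumulation point of paths as above, or prove inversion only for edges of $A$ itself and then invoke continuity of inversion in compact groupoids to push it to all of $\lceil A\rceil$; neither step appears in your write-up.
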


  \begin{proof}
    Denote by $V_A$ and $V_G$ the vertex sets of $A$ and $G$, respectively.
    Let $H$ be the subgraph of $G$
    with vertex set $\overline{V_A}$
    and whose edges are the edges of $G$ with source and target
    in $\overline{V_A}$.
    Clearly, $H$ is closed and a subgroupoid. Since
    $H$ contains $A$ and $A$ generates $G$ as a topological groupoid,
    we conclude that $H=G$
    and thus $\overline{V_A}=V_G$.

    Consider an arbitrary
    closed subsemigroupoid $S$ of $G$ containing $A$.
    Let~$s$ be an edge of $S$.
    Since $\overline{V_A}=V_G$, there are nets $(a_i)_{i\in I}$
    and $(b_j)_{j\in J}$ of elements of $V_A$
    respectively converging to $\alpha(s)$ and $\omega(s)$.
    Because $A$ is strongly
    connected, for each $(i,j)\in I\times J$
    there is some path $u_{i,j}$ in~$A$ from
    $(b_j)_{j\in J}$ to $(a_i)_{i\in I}$.
    Take an accumulation point $u$ of the net
    $(u_{i,j})_{(i,j)\in I\times J}$.
    Then $u$ is an element of $S$ such that $\alpha(u)=\omega(s)$
    and $\omega(u)=\alpha(s)$.
    In particular,
    we may consider the element $(su)^{\omega-1}$ of the local
    semigroup of $S$ at $\alpha(s)$.
    We claim that $u(su)^{\omega-1}=s^{-1}$.
    Indeed, $s\cdot u(su)^{\omega-1}=(su)^\omega$ is
    the local identity of~$G$ at~$\alpha(s)$, while
    $u(su)^{\omega-1}\cdot s=(us)^\omega$
    is the local identity at~$\omega(s)$.
    Hence $s^{-1}\in S$.
    Since $S$ is an arbitrary closed subsemigroupoid of $G$ containing $A$,
    we conclude that~$s^{-1}$ belongs to the closed
    subsemigroupoid~$K$ of~$G$
    generated by~$A$.
    Therefore, $K$ is a closed subgroupoid of~$G$ containing~$A$. Since $G$ is generated by $A$ as a topological groupoid,
    it follows that $K=G$.
  \end{proof}

  A \emph{groupoid congruence}
  is a semigroupoid congruence $\theta$ on a groupoid
  such that $u\mathrel{\theta}v$ implies $u^{-1}\mathrel{\theta}v^{-1}$.
  If $S$ is a compact groupoid,
  then all closed semigroupoid congruences on $S$ are groupoid congruences.
  Indeed, if $u,v\in S$  are coterminal edges
  then $v^{-1}=u^{-1}(vu^{-1})^{\omega-1}$,
  and if moreover $u\mathrel{\theta}v$, then
  $u^{-1}(vu^{-1})^k\mathrel{\theta} u^{-1}$ for every integer $k\geq 1$,
  whence $v^{-1}\mathrel{\theta}u^{-1}$.

  Replacing semigroupoid congruences by groupoid congruences,
  one gets the notion
  of \emph{profinite completion of a finite-vertex groupoid}
  analogous to the corresponding one for semigroupoids.
  These notions generalize the more familiar
  ones of profinite completion of a group and of a semigroup, since
  (semi)groups are the one-vertex (semi)groupoids.
  The following lemma relates these concepts.

  \begin{Lemma}\label{l:completion-vs-fundamental group}
  Let $G$ be a connected groupoid with finitely many vertices.
  Then the profinite completion of a local group of~$G$ is a
  local group of the profinite completion of~$G$.
\end{Lemma}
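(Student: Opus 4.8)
The plan is to compute the local group $(\hat G)(c)$ of the profinite completion directly as an inverse limit and to match it with the profinite completion $\widehat{G(c)}$ of the local group. Write $V$ for the (finite) vertex set of $G$. Every groupoid congruence $\theta$ leaves $V$ untouched, since it only identifies coterminal edges, so each quotient $G/\theta$ has vertex set $V$ and every connecting morphism $G/\theta\to G/\rho$ of the inverse system defining $\hat G$ restricts to the identity on $V$. Consequently, taking the local group at $c$ commutes with the inverse limit: a loop at $c$ in $\hat G=\varprojlim_\theta G/\theta$ is exactly a compatible family of loops at $c$ in the quotients $G/\theta$, so that $(\hat G)(c)=\varprojlim_\theta (G/\theta)(c)$. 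Moreover each $(G/\theta)(c)$ is the quotient $G(c)/\theta_c$ of the local group by the restriction $\theta_c$ of $\theta$ to the loops at $c$, a group congruence of finite index since $G(c)/\theta_c$ embeds into the finite groupoid $G/\theta$.

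It therefore suffices to prove that, as $\theta$ ranges over the finite groupoid congruences of $G$, the restrictions $\theta_c$ range over a cofinal family of finite-index group congruences of $G(c)$; the inverse limit above will then equal $\varprojlim_\sigma G(c)/\sigma=\widehat{G(c)}$. The key step is that every finite-index group congruence $\sigma$ on $G(c)$ is itself of the form $\theta_c$ for a suitable finite groupoid congruence $\theta$. To build $\theta$ I would use connectedness: choose for each vertex $v$ an arrow $p_v\colon c\to v$ in $G$, with $p_c=1_c$. Every arrow $g\colon u\to v$ then factors uniquely as $g=p_v\,h\,p_u^{-1}$ with $h=p_v^{-1}\,g\,p_u\in G(c)$, which identifies $G$ with the groupoid whose edges are the triples $(v,h,u)\in V\times G(c)\times V$, composition multiplying the middle group coordinates when the intermediate vertices match. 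Transporting $\sigma$ through this coordinatization --- declaring $(v,h,u)\mathbin{\theta}(v,h',u)$ exactly when $h\mathbin{\sigma}h'$ --- yields the desired relation.

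What remains is the verification that $\theta$ is well defined and has the required properties: compatibility with composition and with inversion follows because $\sigma$ is a group congruence (so $h\mathbin\sigma h'$ and $k\mathbin\sigma k'$ give $kh\mathbin\sigma k'h'$ and $h^{-1}\mathbin\sigma h'^{-1}$); the quotient $G/\theta$ is finite because both $V$ and $G(c)/\sigma$ are finite; and by construction $\theta_c=\sigma$. I expect the main obstacle to be precisely this extension step --- in particular, making sure the coordinatization is genuinely a groupoid isomorphism and that the transported relation is a bona fide groupoid congruence rather than merely a semigroupoid one. Once it is in place, cofinality is immediate and the identification $(\hat G)(c)=\widehat{G(c)}$ follows, exhibiting the profinite completion of the local group as the local group of the profinite completion.
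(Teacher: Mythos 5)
Your proof is correct, and its engine is the same as the paper's: the step you flag as the main obstacle --- extending a finite-index group congruence $\sigma$ on $G(c)$ to a groupoid congruence on all of $G$ with finite quotient, by conjugating along fixed connecting arrows $p_v\colon c\to v$ --- is exactly the construction at the heart of the paper's proof, where for coterminal edges $u,v\colon y\to z$ one declares $u\sim v$ precisely when the loops $p_yup_z^{-1}$ and $p_yvp_z^{-1}$ are identified under a finite quotient of the completed local group; read through your coordinatization $G\cong V\times G(c)\times V$, the two relations coincide. What genuinely differs is the surrounding architecture. The paper works with the canonical map: the universal property of the completion yields a continuous homomorphism $\psi\colon\widehat{G(x)}\to\hat G(x)$, surjectivity comes from density of the image of the local group in $\hat G(x)$, and the extended congruence is used only to separate points (showing $\psi(g)\neq1$ for $g\neq1$), so no inverse-limit bookkeeping is needed. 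You instead compute both sides as inverse limits, $(\hat G)(c)=\varprojlim_\theta(G/\theta)(c)=\varprojlim_\theta G(c)/\theta_c$ (legitimate, since groupoid congruences fix the finite discrete vertex set, so taking loops at $c$ commutes with the limit), and your extension step then says that the order-preserving map $\theta\mapsto\theta_c$ is onto the poset of finite-index congruences of $G(c)$, so the two inverse systems are mutually cofinal and the limits agree. Your route proves a slightly sharper fact (every finite-index congruence of the local group is the exact restriction of a finite groupoid congruence, not merely refined by one) and makes the structure theorem for connected groupoids explicit; its cost is that you must invoke the standard lemma that an order-preserving index map with cofinal image induces an isomorphism of inverse limits, and verify that the coordinatization is a genuine groupoid isomorphism --- both routine, as you anticipate. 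Either way the lemma follows; your argument is complete modulo those standard verifications.
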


\begin{proof}
  Denote by $\hat G$ the profinite completion of~$G$ and let $x$
  be a vertex of~$G$. We must show that the local group $\hat G(x)$ is
  the profinite completion $\widehat{G(x)}$ of the local group~$G(x)$.

  Consider the natural homomorphism $\lambda\colon G\to\hat G$. Note that it
  maps $G(x)$ into the profinite group $\hat G(x)$, which is
  generated, as a topological group, by $\lambda(G(x))$. Thus, the
  restriction $\kappa=\lambda|_{G(x)}$ induces a unique continuous
  homomorphism $\psi:\widehat{G(x)}\to\hat G(x)$, which is onto.

  Suppose that $g\in \widehat{G(x)}\setminus \{1\}$.
  Since $\widehat{G(x)}$
  is a profinite group, there exists a continuous homomorphism
  $\theta\colon \widehat{G(x)}\to H$ onto a finite group $H$ such that
  $\theta(g)\ne1$.
  For each vertex $y$ in~$G$,
  let $p_y\colon x\to y$ be an edge from~$G$. It is easy
  to check that the following relation is a congruence on~$G$: given
  two edges $u,v:y\to z$ in~$G$, $u\sim v$ if
  $\theta\circ\iota(p_yup_z^{-1})=\theta\circ\iota(p_yvp_z^{-1})$. Moreover, note that, in
  case $u,v\in G(x)$, $u\sim v$ if and only if
  $\theta\circ\iota(u)=\theta\circ\iota(v)$.
  Therefore, if $S=G/{\sim}$, then $S(x)$ is finite, whence,
  since $S$ is a connected groupoid, $S$ is finite.
  As $\hat G$ is the profinite completion of~$G$,
  it follows that the natural quotient mapping $\gamma\colon G\to S$
  factors through $\lambda$ as a continuous homomorphism
  $\gamma'\colon\hat G\to S$.
  The restriction $\hat G(x)\to S(x)$ of $\gamma'$ is
  denoted by~$\gamma''$.

  Noting that $\theta\circ\iota$ is onto because
  the image of $\iota$ is dense, and since
  \begin{equation*}
    \theta\circ\iota(u)=\theta\circ\iota(v)
    \iff
    u\sim v
    \iff
    \gamma(u)=\gamma(v),
  \end{equation*}
  there is an isomorphism $\varphi\colon H\to S(x)$ such that
  $\varphi\circ\theta\circ\iota=\gamma|_{G(x)}=\gamma''\circ\psi\circ\iota$.
  Again because
  the image of $\iota$ is dense,
  we deduce that
  $\varphi\circ\theta=\gamma''\circ\psi$.
  
  All these morphisms are represented in
  Diagram~\eqref{eq:diagram-local-local}.
      \begin{equation}
          \begin{split}
    \label{eq:diagram-local-local}
      \xymatrix{ %
    &&\\
    G %
    \ar[r]_\lambda %
    \ar@/^3em/[rr]_\gamma %
    & %
    \hat G %
    \ar[r]_{\gamma'} %
    & %
    S \\
    G(x) %
    \ar@{^(->}[u] %
    \ar[r]^\kappa %
    \ar[rd]_\iota %
    & %
    \hat G(x) %
    \ar@{^(->}[u] %
    \ar[r]^{\gamma''} %
    & %
    S(x) %
    \ar@{^(->}[u] %
    \\
    & %
    \widehat{G(x)} %
    \ar[u]_\psi %
    \ar[r]^\theta %
    &H\ar@{.>}[u]_\varphi
  }
  \end{split}  
\end{equation}
As $\theta(g)\neq 1$, we get $\gamma''\circ\psi(g)=\varphi\circ\theta(g)\neq 1$,whence $\psi(g)\neq 1$. Therefore,
  $\psi$ is an isomorphism of topological groups.
\end{proof}  
  
\subsection{The fundamental groupoid}
\label{sec:fundamental-groupoid}

For the reader's convenience, we write down a definition of
the fundamental groupoid of a graph.
Let $\Gamma$ be a graph.
Extend $\Gamma$ to a graph $\widetilde \Gamma$ by
injectively
associating to each edge $u$
a new formal inverse edge $u^{-1}$ with $\alpha(u^{-1})=\omega(u)$
and $\omega(u^{-1})=\alpha(u)$. One makes $(u^{-1})^{-1}=u$.
Graphs of the form $\widetilde \Gamma$ endowed with the mapping $u\mapsto u^{-1}$
on the edge set are precisely the graphs in the sense of J.-P.~Serre.
These are the graphs upon which the definition of fundamental groupoid
of a graph is built in~\cite{Lyndon&Schupp:1977}, the supporting
reference we give for the next lines.
Consider in the free category $\widetilde\Gamma^\ast$ the
congruence~$\sim$
generated by the identification of $uu^{-1}$ with $1_{\alpha(u)}$ and $u^{-1}u$
with $1_{\omega(u)}$, where $u$ runs over the set of edges of $\Gamma$.
The quotient $\Pi(\Gamma)=\widetilde\Gamma^\ast/{\sim}$ is a groupoid,
called the \emph{fundamental groupoid} of $\Gamma$.
Note that if $\varphi\colon\Gamma_1\to\Gamma_2$
is a homomorphism of graphs, then
the correspondence $\Pi(\varphi)\colon\Pi(\Gamma_1)\to\Pi(\Gamma_2)$
such that $\Pi(\varphi)(x/{\sim})=\varphi(x)/{\sim}$
is a well defined homomorphism of groupoids, and the correspondence
$\varphi\mapsto\Pi(\varphi)$ defines a functor from the category of graphs to the category of groupoids.

It is well known that the natural graph homomorphism
from $\Gamma^\ast$ to $\Pi(\Gamma)$ (that is, the restriction
to $\Gamma^\ast$ of the quotient mapping
$\widetilde\Gamma^\ast\to \Pi(\Gamma)$) is injective.
If $\Gamma$ is connected (as an undirected graph), then
the local groups of $\Pi(\Gamma)$ are isomorphic; their isomorphism
class is the \emph{fundamental group} of $\Gamma$.
It is also well known that if $\Gamma$ is a connected (finite) graph,
then its fundamental group is a (finitely generated) free group.

  \begin{Lemma}
    \label{l:projection-onto-fundamental-groupoid-for-strongly-connected-semigroupoids}
    Let $\Gamma$ be a strongly connected finite-vertex profinite graph.
    Then the natural continuous homomorphism
    from the free profinite semigroupoid $\Om {\Gamma}{Sd}$
    to the profinite completion of $\Pi(\Gamma)$,
    extending the natural graph homomorphism from $\Gamma$
    to $\Pi(\Gamma)$, is onto.\qed
  \end{Lemma}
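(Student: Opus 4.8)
The plan is to produce the homomorphism in the statement from the universal property of the free profinite semigroupoid, and then to deduce its surjectivity from Lemma~\ref{l:profinite-groupoid-vs-semigroupoid-generated}. First I would compose the natural graph homomorphism $\Gamma\to\Pi(\Gamma)$ with the canonical map $\Pi(\Gamma)\to\widehat{\Pi(\Gamma)}$ into the profinite completion, obtaining a graph homomorphism from $\Gamma$ into the finite-vertex profinite semigroupoid $\widehat{\Pi(\Gamma)}$. By the defining property of $\Om{\Gamma}{Sd}$, this extends uniquely to a continuous semigroupoid homomorphism $\Phi\colon\Om{\Gamma}{Sd}\to\widehat{\Pi(\Gamma)}$, which is the homomorphism under consideration. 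The task is then to show that $\Phi$ is onto.

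Next I would observe that $\Phi(\Om{\Gamma}{Sd})$, being the continuous image of a compact space, is closed, and since $\Phi$ is a homomorphism it is a closed subsemigroupoid of $\widehat{\Pi(\Gamma)}$. Writing $A$ for the image of $\Gamma$ under the graph homomorphism $\Gamma\to\widehat{\Pi(\Gamma)}$, the fact that $\Phi$ extends this homomorphism gives $A\subseteq\Phi(\Om{\Gamma}{Sd})$. Hence $\Phi$ will be surjective as soon as $A$ generates $\widehat{\Pi(\Gamma)}$ as a topological semigroupoid, that is, as soon as $\lceil A\rceil=\widehat{\Pi(\Gamma)}$.

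To obtain this, I would verify the hypotheses of Lemma~\ref{l:profinite-groupoid-vs-semigroupoid-generated} with $G=\widehat{\Pi(\Gamma)}$ and the subgraph $A$. The groupoid $\widehat{\Pi(\Gamma)}$ is compact. The subgraph $A$ is strongly connected, since a graph homomorphism carries directed paths to directed paths, so strong connectivity of $\Gamma$ transfers to its image. Finally, $A$ generates $\widehat{\Pi(\Gamma)}$ as a topological groupoid: every element of $\Pi(\Gamma)$ is the class of a path in $\widetilde\Gamma^\ast$, i.e.\ a groupoid product of edges of $\Gamma$ and their inverses, so the edges of $\Gamma$ generate $\Pi(\Gamma)$ as a groupoid; as the canonical image of $\Pi(\Gamma)$ is dense in its profinite completion, the closed subgroupoid generated by $A$ contains this dense image and therefore equals $\widehat{\Pi(\Gamma)}$. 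Lemma~\ref{l:profinite-groupoid-vs-semigroupoid-generated} then yields $\lceil A\rceil=\widehat{\Pi(\Gamma)}$, and combining this with the previous paragraph shows $\Phi$ is onto.

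The main obstacle is conceptual rather than computational: seeing that surjectivity of a semigroupoid homomorphism into a compact groupoid can be routed through the \emph{groupoid} generation statement supplied by Lemma~\ref{l:profinite-groupoid-vs-semigroupoid-generated}. The only point needing a little care is that the graph homomorphism $\Gamma\to\widehat{\Pi(\Gamma)}$ need not be injective, so $A$ must be handled as the possibly collapsed image subgraph; strong connectivity nevertheless survives, which is exactly what the lemma requires.
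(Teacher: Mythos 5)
Your proposal is correct and follows essentially the same route as the paper: both reduce surjectivity to Lemma~\ref{l:profinite-groupoid-vs-semigroupoid-generated}, after noting that the image of $\Gamma$ in $\widehat{\Pi(\Gamma)}$ is strongly connected and generates it as a topological groupoid (via density of the canonical image of $\Pi(\Gamma)$). The only cosmetic difference is that where you argue directly that the compact image of $\Om{\Gamma}{Sd}$ is a closed subsemigroupoid containing the image of $\Gamma$ (which works because the map is bijective on vertices), the paper cites its Lemma~\ref{l:image-of-semigroupoid-homomorphism} to identify that image with $\lceil h(\Gamma)\rceil$.
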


  To prove Lemma~\ref{l:projection-onto-fundamental-groupoid-for-strongly-connected-semigroupoids} one uses the following fact
  \cite[Corollary 3.20]{Almeida&ACosta:2007a}.

  \begin{Lemma}\label{l:image-of-semigroupoid-homomorphism}
    Let $\psi\colon S\to T$ be a countinuous homomorphism of compact
    semigroupoids. Let $X$ be a subgraph of $S$.
    Then $\psi(\lceil X \rceil)\subseteq \lceil\psi(X)\rceil$.
    Moreover, $\psi(\lceil X \rceil)= \lceil\psi(X)\rceil$
    if $\psi$ is injective on the set of vertices of $S$.
  \end{Lemma}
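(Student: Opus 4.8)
The plan is to prove the two assertions in turn, handling the inclusion first by means of a preimage argument, and then the reverse inclusion (under the vertex-injectivity hypothesis) by showing directly that the image is a closed subsemigroupoid. Throughout I would use only the defining property of $\lceil{-}\rceil$, namely that $\lceil Y\rceil$ is the smallest closed subsemigroupoid containing $Y$, together with the fact that $\psi$, being a semigroupoid homomorphism, respects the incidence maps $\alpha,\omega$ and the partial composition.

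For the inclusion $\psi(\lceil X\rceil)\subseteq\lceil\psi(X)\rceil$, I would consider the preimage $P=\psi^{-1}(\lceil\psi(X)\rceil)$. It is closed because $\psi$ is continuous and $\lceil\psi(X)\rceil$ is closed. It is a subsemigroupoid because whenever $u,v\in P$ are composable in $S$, their images $\psi(u),\psi(v)$ are composable in $T$ and $\psi(uv)=\psi(u)\psi(v)\in\lceil\psi(X)\rceil$, so $uv\in P$. Since $\psi(X)\subseteq\lceil\psi(X)\rceil$, we have $X\subseteq P$; by minimality of the generated closed subsemigroupoid this gives $\lceil X\rceil\subseteq P$, which is exactly the claimed inclusion.

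For the reverse inclusion when $\psi$ is injective on vertices, I would instead show that $Q=\psi(\lceil X\rceil)$ is itself a closed subsemigroupoid of $T$ containing $\psi(X)$; minimality of $\lceil\psi(X)\rceil$ then forces $\lceil\psi(X)\rceil\subseteq Q$, and combined with the first part this yields equality. That $Q$ contains $\psi(X)$ is immediate from $X\subseteq\lceil X\rceil$. That $Q$ is closed follows because $\lceil X\rceil$ is a closed subset of the compact semigroupoid $S$, hence compact, so its continuous image $Q$ is compact and therefore closed in the Hausdorff space $T$.

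The hard part, and the only place the hypothesis is used, is verifying that $Q$ is closed under composition. Given composable $a,b\in Q$, write $a=\psi(u)$ and $b=\psi(v)$ with $u,v\in\lceil X\rceil$. The composability $\omega(a)=\alpha(b)$ only tells us that $\psi(\omega(u))=\psi(\alpha(v))$, so a priori the vertices $\omega(u)$ and $\alpha(v)$ of $S$ may differ (both mapping to the same vertex of $T$), in which case $u$ and $v$ are not composable and there is no reason for $ab$ to lie in $Q$. Injectivity of $\psi$ on vertices removes exactly this obstruction: it forces $\omega(u)=\alpha(v)$, so the product $uv$ is defined in $\lceil X\rceil$ and $ab=\psi(uv)\in Q$. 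This shows $Q$ is a subsemigroupoid, completing the verification and hence the proof of the equality.
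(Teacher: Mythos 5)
Your proof is correct: the preimage argument (with $P=\psi^{-1}(\lceil\psi(X)\rceil)$ closed by continuity and a subsemigroupoid since $\psi$ commutes with $\alpha$, $\omega$ and composition) gives the first inclusion by minimality, and for the reverse inclusion you rightly use compactness of $\lceil X\rceil$ so that $Q=\psi(\lceil X\rceil)$ is closed in the Hausdorff space $T$, with vertex-injectivity invoked exactly where it is needed --- to lift composability of $\psi(u),\psi(v)$ back to $\omega(u)=\alpha(v)$ in $S$, which is precisely the obstruction making the inclusion strict in general. The paper itself gives no proof of this lemma, quoting it as Corollary~3.20 of the cited 2007 paper of Almeida and Costa, and your argument is the standard self-contained one that the citation covers; the only step left implicit is that $P$ and $Q$ are subgraphs (sources and targets of their edges lie in them), which is immediate from $\alpha\circ\psi=\psi\circ\alpha$ and $\omega\circ\psi=\psi\circ\omega$.
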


  \begin{proof}[Proof of
    Lemma~\ref{l:projection-onto-fundamental-groupoid-for-strongly-connected-semigroupoids}]
    Denote by
    $\hat\Pi(\Gamma)$ the profinite completion of $\Pi(\Gamma)$
    and by $h$ the natural
    continuous semigroupoid homomorphism
    $\Om {\Gamma}{Sd}\to \hat\Pi(\Gamma)$.
    By Lemma~\ref{l:image-of-semigroupoid-homomorphism}, the image of $h$
    is the closed subsemigroupoid of $\hat\Pi(\Gamma)$ generated by
    $h(\Gamma)$.
    Since $\hat\Pi(\Gamma)$ is
    generated by $h(\Gamma)$ as a profinite groupoid,
    it follows from Lemma~\ref{l:profinite-groupoid-vs-semigroupoid-generated},
    that $h$ is onto.
  \end{proof}

 \section{Subshifts and their connection with free profinite semigroups}

A subset $X$ of a semigroup $S$ is
\emph{factorial} if the factors of elements
of $X$ also belong to $X$.
The subset $X$ is \emph{prolongable} if
for every $s\in S$ there are $x,y\in X$
such that $xs,sy\in S$.
It is \emph{irreducible}
if for every $u,v\in X$ there is $w\in S$ such that $uwv\in X$.
Using standard compactness arguments, one can show
(see~\cite{ACosta&Steinberg:2011} for a proof) that
if $S$ is a compact semigroup and $X$ is a
nonempty, closed, factorial and irreducible subset
of $S$, then $X$ contains a regular
$\Cl J$-class, called the \emph{apex of $X$} and denoted $J(X)$, such that every element of $X$ is a factor of every element of~$J(X)$.

Let $A$ be a finite set. Endow $A^{\mathbb Z}$ with the product
topology, where $A$ is viewed as discrete space.
Let $\sigma$  be the homeomorphism $A^{\mathbb Z}\to A^{\mathbb Z}$
defined by $\sigma((x_i)_{i\in\mathbb Z})=(x_{i+1})_{i\in\mathbb Z}$,
the \emph{shift} mapping on $A^{\mathbb Z}$.
A \emph{subshift} of $\z A$ is
a nonempty closed subset $\Cl X$ of $\z A$ such that
$\sigma(\Cl X)=\Cl X$. A \emph{finite block} of an element
$x=(x_i)_{i\in\mathbb Z}$ of $\z A$ is a word
of the form $x_ix_{i+1}\ldots x_{i+n}$ (which is also denoted
by $x_{[i,i+n]}$)
for some $n\geq 0$. For a subset $\Cl X$ of $\z A$, denote by
$L(\Cl X)$ the set of finite blocks of elements of $\Cl X$. Then the
correspondence $\Cl X\mapsto L(\Cl X)$ is an isomorphism between the
poset of subshifts of $\z A$ and the poset of factorial, prolongable
languages of $A^+$~\cite[Proposition 1.3.4]{Lind&Marcus:1996}.
A subshift $\Cl X$ is \emph{irreducible} if $L(\Cl X)$ is irreducible.
We are interested in studying the topological closure of $L(\Cl X)$
in $\Om AS$, when $\Cl X$ is a subshift of $\z A$.
It was noticed in~\cite{Almeida&ACosta:2007a}
that $\overline{L(\Cl X)}$ is a
factorial and prolongable subset of $\Om AS$,
and that if $\Cl X$ is irreducible then $\ov{L(\Cl X)}$
is an irreducible subset of $\Om AS$.
Therefore, supposing $\Cl X$ is irreducible, we can consider
the apex $J(\Cl X)$ of $\ov{L(\Cl X)}$. Since
$J(\Cl X)$ is regular, it
has maximal subgroups, which are isomorphic as profinite groups;
we denote by $G(\Cl X)$ the corresponding abstract profinite
group.

In this paper we concentrate our attention on
an important class of irreducible subshifts, the
\emph{minimal} subshifts, that is, those that do
not contain proper subshifts.
This class includes the \emph{periodic} subshifts, finite subshifts
$\Cl X$ for which there is a positive integer $n$ (called a \emph{period})
and $x\in\z A$ such that
$\sigma^n(x)=x$ and $\Cl X=\{\sigma^k(x)\mid 0\leq k<n\}$.
It is well known that a subshift $\Cl X$ is minimal if and only if $L(\Cl X)$
is \emph{uniformly recurrent}, that is, if and only if
for every $u\in L(\Cl X)$ there is an integer $n$
such that every word of $L(\Cl X)$ with length at least $n$
has $u$ as a factor (cf.~\cite[Theorem 1.5.9]{Lothaire:2001}).

For a subshift $\Cl X$ of $\z A$, denote by $\Mir(\Cl X)$ the set
of elements $u$ of $\Om AS$ such that all finite factors of $u$ belong
to $L(\Cl X)$.  One has $\overline{L(\Cl X)}\subseteq \Mir(\Cl X)$,
and there are simple examples of irreducible
subshifts where this inclusion is strict~\cite{Costa:2006}.
In what follows,
a \emph{maximal regular element} of $\Om AS$ is a regular
element of $\Om AS$ that is $\Cl J$-equivalent with its regular
factors. The maximal regular elements of $\Om AS$ are precisely the
elements of $\Om AS\setminus A^+$ all of whose proper factors belong to $A^+$.

\begin{Thm}\label{t:JX-in-minimal-case}
  Let $\Cl X$ be a minimal subshift.
  Then $\overline{L(\Cl X)}=\Mir(\Cl X)$
  and $\overline{L(\Cl X)}\setminus A^+=J(\Cl X)$.
  The correspondence $\Cl X\mapsto J(\Cl X)$ is a bijection between the
  set of minimal subshifts of $\z A$ and the set of
  $\Cl J$-classes of maximal regular elements of $\Om AS$.
\end{Thm}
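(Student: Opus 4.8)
The plan is to establish the three assertions in sequence, using the
characterization of minimal subshifts via uniform recurrence and the
factorial, prolongable, irreducible structure of $\ov{L(\Cl X)}$ inside
$\Om AS$.

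First I would prove the inclusion $\Mir(\Cl X)\subseteq\ov{L(\Cl X)}$
(the reverse inclusion being already noted in the text). Let
$u\in\Mir(\Cl X)$, so every finite factor of $u$ lies in $L(\Cl X)$. If
$u\in A^+$ this is immediate, so assume $u$ is infinite. The key is
uniform recurrence: given any finite factor $w$ of $u$ and any word
$v\in L(\Cl X)$, uniform recurrence of $L(\Cl X)$ forces every
sufficiently long word of $L(\Cl X)$ to contain $v$ as a factor, and in
particular the finite factors of $u$ eventually contain each fixed
$v\in L(\Cl X)$. I would use this to approximate $u$ by finite blocks of
$\Cl X$: since $A^+$ is dense in $\Om AS$ and the finite factors of $u$
all lie in $L(\Cl X)$, a standard net/compactness argument yields
$u\in\overline{L(\Cl X)}$. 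This already gives the first equality
$\ov{L(\Cl X)}=\Mir(\Cl X)$.

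Next I would identify $\ov{L(\Cl X)}\setminus A^+$ with $J(\Cl X)$. One
inclusion is clear: $J(\Cl X)\subseteq\ov{L(\Cl X)}$ and $J(\Cl X)$
consists of infinite elements (a minimal subshift that is not periodic
has no finite blocks serving as apex, and in the periodic case the apex
is still infinite as an element of $\Om AS\setminus A^+$). For the
reverse inclusion, take an infinite $u\in\ov{L(\Cl X)}$; I must show
$u\in J(\Cl X)$, i.e.\ $u$ is $\Cl J$-equivalent to every element of the
apex. Using uniform recurrence again, every element of $\ov{L(\Cl X)}$
is a factor of every infinite element: given infinite $u,w\in\ov{L(\Cl
X)}$, each finite factor of $u$ appears as a factor of the long finite
blocks approximating $w$, so passing to the limit shows $u$ is a factor
of $w$. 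Mutual factorization then gives $\Cl J$-equivalence of all
infinite elements, so $\ov{L(\Cl X)}\setminus A^+$ is a single $\Cl
J$-class, necessarily the apex $J(\Cl X)$, which is regular.

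Finally, for the bijection, I would construct the inverse
correspondence. Given a $\Cl J$-class $J$ of maximal regular elements of
$\Om AS$, set $\Cl X_J$ to be the subshift whose language is the set of
finite factors of elements of $J$; the maximality condition (all proper
factors lie in $A^+$) guarantees this language is factorial and
prolongable, and I would check it is uniformly recurrent, hence defines
a minimal subshift. The two constructions $\Cl X\mapsto J(\Cl X)$ and
$J\mapsto\Cl X_J$ are mutually inverse by the second equality already
proved. \textbf{The main obstacle} I anticipate is the uniform-recurrence
argument showing that every infinite element of $\ov{L(\Cl X)}$ is a
factor of every other: one must handle the passage from ``each finite
factor eventually appears'' to a genuine factorization in the profinite
semigroup, which requires a careful compactness argument extracting a
convergent net of witnessing factorizations $w=x\,u'\,y$ with $u'$
converging to $u$. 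The verification that $\Cl X_J$ is uniformly
recurrent (equivalently, that the factors of a fixed maximal regular
element recur with bounded gaps) is the other delicate point, and is
where the regularity and $\Cl J$-maximality of $J$ must be used
essentially.
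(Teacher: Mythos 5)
A preliminary remark: the paper itself does not prove Theorem~\ref{t:JX-in-minimal-case}; it imports it from \cite{Almeida:2005c}, noting that the equalities $\overline{L(\Cl X)}=\Mir(\Cl X)=J(\Cl X)\cup L(\Cl X)$ can alternatively be derived with the profinite-semigroupoid tools of \cite{Almeida&ACosta:2007a} that are recalled in Section~4. Judged on its own, your proposal has a genuine gap, located exactly at the hard core of the theorem: the inclusion $\Mir(\Cl X)\subseteq\overline{L(\Cl X)}$. Your key step --- ``since $A^+$ is dense in $\Om AS$ and the finite factors of $u$ all lie in $L(\Cl X)$, a standard net/compactness argument yields $u\in\overline{L(\Cl X)}$'' --- is not a proof, and no standard argument of this kind exists. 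Density of $A^+$ gives a net of words converging to $u$, but these words need not belong to $L(\Cl X)$: all one can say is that they eventually lie in each clopen set $\Mir_n(\Cl X)$, i.e.\ their \emph{short} factors are in $L(\Cl X)$. Conversely, the finite factors of $u$, which do lie in $L(\Cl X)$, need not converge to $u$: an infinite pseudoword is in general not an accumulation point of its own finite factors. Indeed, the two premises you invoke hold for \emph{every} subshift, whereas the paper records in Section~3 that there are irreducible subshifts with $\overline{L(\Cl X)}\subsetneq\Mir(\Cl X)$; so minimality must enter in an essential, non-routine way. Your actual use of uniform recurrence does not supply this: it shows that every element of $\overline{L(\Cl X)}\setminus A^+$ is a factor of $u$, that is, $u\leq_{\Cl J}J(\Cl X)$, but membership in $J(\Cl X)$ also needs the reverse relation ($u$ must be a factor of elements of $J(\Cl X)$), and that direction is immune to the compactness arguments you describe. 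For the same reason, the repair you anticipate for your ``main obstacle'' --- extracting factorizations $w=xu'y$ with $u'$ converging to $u$ --- cannot be carried out when the witnesses $u'$ are finite factors of $u$: one can force $u'\to u$ only if $u$ is already known to be a limit of elements of $L(\Cl X)$, which begs the question. The actual proofs are non-elementary: either the arguments of \cite{Almeida:2005c}, or, as in the proof of Theorem~\ref{t:isomorphism} in this paper, the identification of $\Mir(\Cl X)$ with the set of labels of edges of $\varprojlim\hat\Sigma_{2n}(\Cl X)$ combined with Theorem~\ref{t:free-profinite-semigroupoids-in-minimal-case} (for minimal $\Cl X$, finite paths are dense in that inverse limit), whence every element of $\Mir(\Cl X)$ is a limit of labels of finite paths, which lie in $L(\Cl X)$.

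Two further points. In your second step the same conflation occurs but is repairable: for $u,w\in\overline{L(\Cl X)}\setminus A^+$, run the uniform-recurrence argument not with the finite factors of $u$ but with a net $(u_j)$ of elements of $L(\Cl X)$ converging to $u$, which exists precisely because $u\in\overline{L(\Cl X)}$; for each fixed $j$, compactness gives $w=x_ju_jy_j$, and an accumulation point of $(x_j,u_j,y_j)$ then gives $w=xuy$ because $u_j\to u$. This correctly shows that $\overline{L(\Cl X)}\setminus A^+$ is a single $\Cl J$-class, hence equal to the apex $J(\Cl X)$. Finally, in the bijection step you defer exactly the point that needs work: that the language of finite factors of a $\Cl J$-class of maximal regular elements is uniformly recurrent. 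This requires a genuine argument --- the standard one extracts, by a Ramsey-type compactness argument on prefixes, an infinite idempotent factor from any infinite pseudoword, and plays it against $\Cl J$-maximality --- and without it, and without the first equality, the proposed inverse correspondence is not established.
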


Theorem~\ref{t:JX-in-minimal-case} is from~\cite{Almeida:2005c}.
In~\cite{Almeida&ACosta:2007a},
an approach whose tools are recalled in the next section,
distinct from that of~\cite{Almeida:2005c}, was used to deduce the equalities 
$\overline{L(\Cl X)}=\Mir(\Cl X)=J(\Cl X)\cup L(\Cl X)$,
when $\Cl X$ is minimal.

A fact that we shall use quite often is that every element of
$\Om AS\setminus A^+$
has a unique prefix in $A^+$
with length~$k$,
and a unique suffix in $A^+$ with length~$k$,
for every $k\geq 1$ (cf.~\cite[Section 5.2]{Almeida:1994a}).
Let $\ZZ^+_0$ and $\ZZ^-$ be respectively the sets of nonnegative integers and of negative integers.
For $u\in\Om AS\setminus A^+$,
we denote by $\ori u$ the
unique element $(x_i)_{i\in\ZZ^+_0}$ of $A^{\mathbb Z^+_0}$ such
that $x_{[0,k]}$ is a prefix of $u$, for every $k\geq 0$,
and by
$\ole u$ the
unique element $(x_i)_{i\in\ZZ^-}$ of $A^{\mathbb Z^-}$ such
that $x_{[-k,-1]}$ is a suffix of $u$, for every $k\geq 1$.
Finally, we denote by $\li u$ the element of
$\z A$ that restricts in $A^{\mathbb Z^-}$
to $\ole u$ and in $A^{\mathbb Z^+_0}$
to $\ori u$.

The part of the next lemma about
Green's relations $\mathcal R$ and $\mathcal L$ was observed in~\cite{Almeida:2003a}
and in~\cite[Lemma 6.6]{Almeida&ACosta:2007a}.
The second part, about the $\mathcal H$ relation, is an easy
consequence of the first part, and it
is proved in a more general context in~\cite[Lemma 5.3]{Almeida&ACosta:2012}.

\begin{Lemma}\label{l:parametrization-of-R-L-classes}
  Let $\Cl X$ be a minimal subshift.  
  Two elements $u$ and $v$ of $J(\Cl X)$ are
  $\Cl R$-equivalent (respectively, $\Cl L$-equivalent) if and only if
  $\ori u=\ori v$ (respectively, $\ole u=\ole v$).
  Moreover, if $x\in\Cl X$,
  then the $\Cl H$-class $G_x$
formed by the elements $u$ of $J(\Cl X)$
such that $\li u=x$ is a maximal subgroup of $J(\Cl X)$.
\end{Lemma}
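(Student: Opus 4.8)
The plan is to establish the statements for $\Cl R$ and $\Cl L$ first and then deduce the $\Cl H$-part. For the $\Cl R$-statement the forward implication is immediate: if $u\mathrel{\Cl R}v$ in $J(\Cl X)$ then $v\in u\,\Om AS^1$, so either $v=u$ or $v=us$ for some $s$; as $u$ is infinite its finite prefixes are unchanged by multiplication on the right, whence $\ori u=\ori v$. For the converse I would use that $J(\Cl X)$ is a regular $\Cl J$-class (Theorem~\ref{t:JX-in-minimal-case}) together with the stability of the compact semigroup $\Om AS$. Assuming $\ori u=\ori v$, pick by regularity an idempotent $e$ with $e\mathrel{\Cl R}u$; by the forward implication $\ori e=\ori u=\ori v$, and since $e$ is a left identity on its own $\Cl R$-class we have $eu=u$. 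It then suffices to prove that $ev=v$: this gives $v=ev\in e\,\Om AS^1$, and because $e$ and $v$ lie in the common $\Cl J$-class $J(\Cl X)$, stability promotes this to $v\mathrel{\Cl R}e\mathrel{\Cl R}u$.

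Everything thus reduces to the following assertion, which I expect to be the main obstacle: \emph{if $e$ is an idempotent of $J(\Cl X)$ and $w\in J(\Cl X)$ satisfies $\ori e=\ori w$, then $ew=w$.} Observe that $ew$ and $w$ automatically share the same $\ori{}$ and the same $\ole{}$, hence the same associated bi-infinite word; the difficulty is precisely that an element of $J(\Cl X)$ is \emph{not} determined by that data (the relevant $\Cl H$-classes are in general nontrivial groups), so idempotency of $e$ must here be combined with minimality. The idea is to represent $e$ and $w$ as limits of words of $L(\Cl X)$ sharing arbitrarily long common prefixes and to invoke the uniform recurrence of $L(\Cl X)$ (which characterizes minimality) to show that pre-multiplying by $e$ cannot alter $w$; this is the substance of the results quoted from \cite{Almeida:2003a} and \cite[Lemma~6.6]{Almeida&ACosta:2007a}. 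The $\Cl L$-statement I would not prove separately: word reversal is a continuous anti-automorphism of $\Om AS$ carrying $\overline{L(\Cl X)}$ onto $\overline{L(\Cl X')}$ for the (again minimal) reversed subshift $\Cl X'$, exchanging $\ori{}$ with $\ole{}$ and $\Cl R$ with $\Cl L$, so the $\Cl L$-statement for $\Cl X$ is the $\Cl R$-statement for $\Cl X'$.

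For the $\Cl H$-part I would first combine the two previous statements: for $u,v\in J(\Cl X)$ one has $\li u=\li v$ if and only if $\ori u=\ori v$ and $\ole u=\ole v$, that is, if and only if $u\mathrel{\Cl R}v$ and $u\mathrel{\Cl L}v$, i.e. $u\mathrel{\Cl H}v$. Hence $G_x$, once known to be nonempty, is exactly an $\Cl H$-class of $J(\Cl X)$, and it is a maximal subgroup precisely when it contains an idempotent. So it remains to show that for every $x\in\Cl X$ there is an idempotent of $J(\Cl X)$ with $\li{}$ equal to $x$. Consider $Y:=\{\li e: e\in J(\Cl X),\ e=e^2\}\subseteq\Cl X$. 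The set of idempotents of $\Om AS$ is closed, being the equalizer of $s\mapsto s^2$ and the identity; $J(\Cl X)=\overline{L(\Cl X)}\setminus A^+$ is compact; and $\li{}$ is continuous on the infinite pseudowords, each finite prefix and suffix being locally constant there. As regularity of $J(\Cl X)$ furnishes at least one idempotent, $Y$ is a nonempty compact, hence closed, subset of $\Cl X$.

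Finally I would verify that $Y$ is shift-invariant. Given an idempotent $e$ with $\li e=x$, write $e=ae_1$ where $a=x_0$ is the length-one prefix of $e$; then $g:=(e_1a)^\omega$ is an idempotent with $\ori g=\ori{e_1}=x_1x_2\cdots$ and $\ole g=\cdots x_{-1}x_0$, so that $\li g=\sigma(x)$, and since every finite factor of $g$ is a factor of $x$ one has $g\in\Mir(\Cl X)=\overline{L(\Cl X)}$, whence $g\in J(\Cl X)$. Thus $\sigma(Y)\subseteq Y$. As $\Cl X$ is minimal, the forward orbit of any point of $Y$ is dense in $\Cl X$ and contained in $Y$; since $Y$ is closed this forces $Y=\Cl X$. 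Consequently every $G_x$ with $x\in\Cl X$ contains an idempotent, and is therefore a maximal subgroup of $J(\Cl X)$, as asserted.
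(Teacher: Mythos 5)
Your overall architecture is sound: the forward implication for $\Cl R$ is proved correctly, deducing the $\Cl L$-statement from the $\Cl R$-statement via the reversal anti-automorphism of $\Om AS$ is legitimate, and your treatment of the $\Cl H$-part is correct once the first part is available (though it is more roundabout than necessary: since Theorem~\ref{t:JX-in-minimal-case} keeps products of elements of $G_x$ inside $J(\Cl X)$, one gets $G_xG_x\subseteq G_x$ directly, and an $\Cl H$-class meeting its own square is a maximal subgroup, with no need for your closed shift-invariant set of idempotents). The genuine gap sits exactly where the mathematical content of the lemma lies: the converse of the $\Cl R$-statement. You correctly reduce it to the claim that $\ori e=\ori w$ with $e$ idempotent and $e,w\in J(\Cl X)$ forces $ew=w$, you correctly flag this as the main obstacle --- and then you do not prove it. ``Represent $e$ and $w$ as limits of words sharing long prefixes and invoke uniform recurrence'' is a statement of intent, not an argument; nothing in your text explains how uniform recurrence prevents left multiplication by $e$ from altering $w$. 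Deferring the substance to \cite{Almeida:2003a} and \cite[Lemma~6.6]{Almeida&ACosta:2007a} mirrors what the paper itself does (it gives no proof of this lemma, only citations), but it leaves your proposal without a proof of the one step that is not routine.

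The gap can be closed with tools the paper has already quoted, in a way that makes your idempotent reduction unnecessary. Given $u,v\in J(\Cl X)$ with $\ori u=\ori v$, let $p_n$ be the common prefix of length $n$ and write $u=p_nu_n$, $v=p_nv_n$, using the unique factorization of an infinite pseudoword over a finite prefix. Pass to a subnet so that $(p_n,u_n,v_n)$ converges to some $(p,u',v')$; then $u=pu'$ and $v=pv'$. The element $p$ is infinite (the lengths $|p_n|$ tend to infinity and finite words are isolated), and every finite factor of $p$ is a factor of some $p_n$, hence of $u$, hence lies in $L(\Cl X)$; therefore $p\in\Mir(\Cl X)\setminus A^+=J(\Cl X)$ by Theorem~\ref{t:JX-in-minimal-case}. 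This is the only place where minimality is used, and note that it enters through Theorem~\ref{t:JX-in-minimal-case} rather than through any direct manipulation with uniform recurrence, contrary to what your sketch suggests. Now $u\leq_{\Cl R}p$ and $u\mathrel{\Cl J}p$, so stability of $\Om AS$ gives $u\mathrel{\Cl R}p$; likewise $v\mathrel{\Cl R}p$, whence $u\mathrel{\Cl R}v$. With this paragraph inserted in place of your appeal to the literature, your proposal becomes a complete proof.
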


We retain for the rest of the paper the notation $G_x$ given in
Lemma~\ref{l:parametrization-of-R-L-classes}.

\section{Free profinite semigroupoids generated by
  Rauzy graphs}

Let $\Cl X$ be a subshift of $\z A$.
The \emph{graph of $\Cl X$} is the graph $\Sigma(\Cl X)$
having~$\Cl X$ as the set of vertices and where the edges are precisely
the pairs $(x,\sigma(x))$, with source and target being
respectively equal to $x$ and $\sigma(x)$.
The graph $\Sigma(\Cl X)$ is a compact graph,
with the topology on the edge set being naturally induced by
that of $\Cl X$.

Denote by $L_n(\Cl X)$ the set of elements of $A^+$
with length $n$.
The \emph{Rauzy graph of order $n$} of $\Cl X$,
denoted $\Sigma_n(\Cl X)$, is the graph defined by the following data:
the set of vertices is $L_n(\Cl X)$, the set of edges is
$L_{n+1}(\Cl X)$, and
incidence of edges in vertices is given by
$$a_1a_2\cdots a_n %
    \xrightarrow{a_1a_2\cdots a_na_{n+1}}a_2\cdots a_na_{n+1},$$
    where $a_i\in A$.

\begin{Rmk}\label{r:rauzy-strongly-connected}
  If $\Cl X$ is irreducible,
  then $\Sigma_{n}(\Cl X)$ is strongly connected.
\end{Rmk}    

In the case of a Rauzy graph of even order $2n$, we consider
a function $\mu_n$, called \emph{central labeling}, assigning to
each edge $a_1a_2\cdots a_{2n}a_{2n+1}$ ($a_i\in A$) its middle letter $a_{n+1}$.

\begin{Rmk}\label{r:words-recognized-by-sigma-n}
  Extending the labeling $\mu_n$
  as a semigroupoid homomorphism $\Sigma_{2n}(\Cl X)^+\to A^+$,
one sees that the set of images of
paths of $\Sigma_{2n}(\Cl X)$ by that homomorphism
is the set of elements of $A^+$ whose factors
of length at most $2n+1$ belong to $L(\Cl X)$.
\end{Rmk}

For $m\geq n$, we define a graph homomorphism
    $p_{m,n}\colon \Sigma_{2m}(\Cl X)\to\Sigma_{2n}(\Cl X)$
    as follows: if $w\in L_{2m}(\Cl X)\cup L_{2m+1}(\Cl X)$
    and if $w=vuv'$
    with $v,v'\in A^{m-n}$,
    then $p_{m,n}(w)=u$.
Note that $p_n$ preserves the central labeling, that is,
$\mu_n\circ p_{m,n}(w)=\mu_{m}(w)$ for every edge $w$
of $\Sigma_{2m}(\Cl X)$.
The family of onto graph homomorphisms $\{p_{m,n}\,|\,n\leq m\}$
defines an inverse system of compact graphs.
The corresponding inverse limit $\varprojlim \Sigma_{2n}(\Cl X)$
will be identified with $\Sigma(\Cl X)$
since the mapping from $\Sigma(\Cl X)$ to $\varprojlim\Sigma_{2n}(\Cl X)$
sending $x\in\Cl X$ to $(x_{[-n,n-1]})_n$
and $(x,\sigma(x))$ to $(x_{[-n,n]})_n$
is a continuous graph isomorphism.
The projection $\Sigma(\Cl X)\to \Sigma_{2n}(\Cl X)$
is denoted by $p_n$. Let $\mu$ be the mapping defined on the set of edges of
$\Sigma(\Cl X)$ by assigning $x_0$ to $(x,\sigma(x))$.
Then $\mu=\mu_n\circ p_n$, for every $n\geq 1$.

We proceed with the setting of~\cite{Almeida&ACosta:2007a}.
Like in that paper, denote by $\hat\Sigma_{2n}(\Cl X)$
and by $\hat \Sigma(\Cl X)$
the free profinite semigroupoids generated respectively by
$\Sigma_{2n}(\Cl X)$ and by $\Sigma(\Cl X)$.
The graph homomorphism
$p_{m,n}\colon\Sigma_{2m}(\Cl X)\to\Sigma_{2n}(\Cl X)$ extends
uniquely to a continuous homomorphism
$\hat p_{m,n}\colon\hat\Sigma_{2m}(\Cl X)
\to \hat\Sigma_{2n}(\Cl X)$
of compact semigroupoids.
This establishes an inverse limit
$\varprojlim \hat\Sigma_{2n}(\Cl X)$ in the category of
compact semigroupoids, in which
the graph $\Sigma(\Cl X)=\varprojlim \Sigma_{2n}(\Cl X)$ naturally
embeds.
The canonical projection
$\varprojlim \hat\Sigma_{2n}(\Cl X)\to \hat\Sigma_{2k}(\Cl X)$
is denoted~$\hat p_k$.
Recall that the free profinite semigroupoid $\hat\Sigma(\Cl X)$ also embeds in
$\varprojlim \hat\Sigma_{2n}(\Cl X)$,
and that we do not know of any example where
the inclusion is strict.

\begin{Thm}[{\cite{Almeida&ACosta:2007a}}]\label{t:free-profinite-semigroupoids-in-minimal-case}
  If $\Cl X$ is a minimal subshift then
  $\hat\Sigma(\Cl X)=\varprojlim \hat\Sigma_{2n}(\Cl X)
  =\overline{\Sigma(\Cl X)^+}$.
\end{Thm}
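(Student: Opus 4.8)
The plan is to collapse the chain of inclusions
$\overline{\Sigma(\Cl X)^+}\subseteq\hat\Sigma(\Cl X)\subseteq\varprojlim\hat\Sigma_{2n}(\Cl X)$,
where the first inclusion holds because $\overline{\Sigma(\Cl X)^+}=\overline{\langle\Sigma(\Cl X)\rangle}$ is contained in the closed subsemigroupoid $\lceil\Sigma(\Cl X)\rceil=\hat\Sigma(\Cl X)$ generated by $\Sigma(\Cl X)$, and the second is the natural embedding recalled just before the statement. Thus it suffices to prove the reverse inclusion $\varprojlim\hat\Sigma_{2n}(\Cl X)\subseteq\overline{\Sigma(\Cl X)^+}$, that is, that $\Sigma(\Cl X)^+$ is \emph{dense} in $\varprojlim\hat\Sigma_{2n}(\Cl X)$. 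Once this is shown all three objects coincide, and since the common value is the whole inverse limit, $\overline{\Sigma(\Cl X)^+}$ is automatically a closed subsemigroupoid, as required.

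Because the topology of the inverse limit is generated by the projections $\hat p_N$ and the levels are cofinal, density reduces to a statement at each fixed level. Write $P_N:=p_N(\Sigma(\Cl X)^+)$ for the set of level-$N$ paths that are projections of paths of $\Sigma(\Cl X)$; by Remark~\ref{r:words-recognized-by-sigma-n} these are exactly the paths of $\Sigma_{2N}(\Cl X)$ whose defining word actually lies in $L(\Cl X)$, whereas a general path of $\Sigma_{2M}(\Cl X)$ corresponds only to a word all of whose factors of length $2M+1$ belong to $L(\Cl X)$ (an ``$M$-admissible'' word). Density then amounts to: for every continuous homomorphism $\phi$ of $\hat\Sigma_{2N}(\Cl X)$ onto a finite semigroupoid $F$ and every coherent $w=(w_n)\in\varprojlim\hat\Sigma_{2n}(\Cl X)$, one has $\phi(w_N)\in\phi(P_N)$. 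Here coherence is used crucially: since $w_N=\hat p_{M,N}(w_M)$ and $\Sigma_{2M}(\Cl X)^+$ is dense in $\hat\Sigma_{2M}(\Cl X)$, the value $\phi(w_N)$ is realized, for \emph{every} $M\ge N$, by an $M$-admissible path. So the task is to pass from ``$M$-admissible for all $M$'' to ``genuinely in $L(\Cl X)$'' while keeping the image in the fixed finite $F$ unchanged.

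This passage from local to global admissibility is the heart of the matter and the step I expect to be the main obstacle; it is precisely the path-level analogue of the equality $\Mir(\Cl X)=\overline{L(\Cl X)}$ of Theorem~\ref{t:JX-in-minimal-case}, and it is exactly where minimality (uniform recurrence) must enter, which is consistent with the irreducible case being left open. One sub-case is immediate: if for arbitrarily large $M$ the value $\phi(w_N)$ is realized by $M$-admissible paths of \emph{bounded} length, then finiteness of the alphabet forces a single word $W$ to be $M$-admissible for infinitely many $M$; every finite factor of $W$ then lies in $L(\Cl X)$, whence $W\in L(\Cl X)$ and $\phi(w_N)\in\phi(P_N)$.

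The difficulty is the unbounded case, which I would treat by a shortening argument driven by uniform recurrence and idempotents. In a long $M$-admissible word a vertex $v\in L_{2N}(\Cl X)$ must recur with a repeated value in $F$ of the corresponding prefix, and I would replace the infix between two such occurrences by a genuine return word of $v$ carrying the same value in $F$: such genuine return words exist and, by a Ramsey/idempotent-pumping argument carried out inside an actual point of $\Cl X$ (using that minimality makes the set of return words finite and the gaps bounded), can be taken to realize an idempotent, so that the replacement does not disturb the image in $F$. Iterating these replacements should turn the $M$-admissible word into one lying in $L(\Cl X)$ while preserving its $\phi$-image, yielding $\phi(w_N)\in\phi(P_N)$ and completing the density argument. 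Arranging the replacement so that it simultaneously restores genuineness and preserves the image in $F$ is the delicate point on which the whole proof rests.
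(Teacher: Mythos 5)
A preliminary remark: the paper you were given never proves this statement --- it is imported, with proof, from \cite{Almeida&ACosta:2007a} --- so there is no in-paper argument to compare yours against, and I can only judge the proposal on its own merits. Up to the point you yourself flag as delicate, your framework is correct: the collapse of the chain $\overline{\Sigma(\Cl X)^+}\subseteq\hat\Sigma(\Cl X)\subseteq\varprojlim\hat\Sigma_{2n}(\Cl X)$ to a single density claim, the level-wise reformulation through continuous homomorphisms onto finite semigroupoids (legitimate, since $\hat\Sigma_{2N}(\Cl X)$ is the profinite completion of the free semigroupoid over a finite-vertex graph), the identification of the crux as the path-level analogue of $\Mir(\Cl X)=\overline{L(\Cl X)}$, and the bounded-length sub-case are all sound.

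The unbounded case, however, contains a genuine gap, in fact two. First, the surgery step is circular. Writing the admissible path as $P_1P_2P_3$ with $P_2$ a loop at $v$ and $\phi(P_1P_2)=\phi(P_1)$, replacing $P_2$ by a genuine loop $r$ preserves the total image only if $\phi(P_1)\phi(r)=\phi(P_1)$. Your Ramsey/pumping argument inside a point of $\Cl X$ does yield genuine loops at $v$ with idempotent $\phi$-image, but it yields \emph{some} idempotent $f$ of the local semigroup of $F$ at $\phi(v)$, determined by the return structure of that point and entirely unrelated to $\phi(P_1)$ or $\phi(P_2)$; nothing forces $\phi(P_1)f=\phi(P_1)$, since that local semigroup may contain many idempotents none of which acts as a right identity on $\phi(P_1)$. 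Producing a genuine loop whose $\phi$-value is prescribed (or even merely absorbable by a prescribed left context) is essentially the statement being proved. Second, even granting image preservation at every step, the iteration cannot terminate in a word of $L(\Cl X)$: splicing a genuine loop into an $M$-admissible word matches the two sides only along the length-$2N$ occurrence of $v$, so the certified admissibility of the result collapses from $M$ to $N$ --- across the junctions only factors of length at most $2N+1$ are guaranteed to lie in $L(\Cl X)$. Since $uw,wv\in L(\Cl X)$ never implies $uwv\in L(\Cl X)$, no matter how long the overlap $w$ is (this failure is exactly what separates $\Mir_n(\Cl X)$ from $\overline{L(\Cl X)}$ and is the entire content of the theorem), ``iterating these replacements'' merely returns you to the class of arbitrary level-$N$ paths from which you started, with no mechanism for genuineness ever to appear. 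A correct argument must construct the approximating path globally inside a single point of $\Cl X$, so that membership in $L(\Cl X)$ is automatic, while controlling the image in $F$; it cannot be obtained by local surgery on admissible words.
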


In~\cite{Almeida&ACosta:2007a} one finds examples
of irreducible subshifts $\Cl X$
for which one has $\overline{\Sigma(\Cl X)^+}\neq \hat\Sigma(\Cl X)$.

Viewing $A$ as a virtual one-vertex graph, whose edges are the elements
of~$A$,
the graph homomorphism
$\mu_n\colon\Sigma_{2n}(\Cl X)\to A$
    extends in a unique way to a continuous semigroupoid homomorphism
    $\hat\mu_n\colon\hat\Sigma_{2n}(\Cl X)\to\Om AS$.
    The equality $\mu_m=\mu_n\circ p_{m,n}$ yields
    $\hat\mu_n\circ \hat p_{m,n}=\hat\mu_m$, when $m\geq n\geq 1$,
    and so we may consider the continuous
    semigroupoid
    homomorphism
    $\hat\mu\colon\varprojlim\hat\Sigma_{2n}(\Cl X)\to\Om AS$
    such that $\hat\mu=\hat\mu_n\circ\hat p_n$ for every~$n\geq 1$.
    Recall that a graph homomorphism
    is \emph{faithful} if distinct coterminal edges have distinct images.
    It turns out that $\hat\mu_n$ is
    faithful (cf.~\cite[Proposition 4.6]{Almeida&ACosta:2007a})
    and therefore so is $\hat\mu$.

    Let us now turn our attention to the images of $\hat\mu_n$
    and $\hat\mu$.
    For a positive integer $n$, let
$\Mir_n(\Cl X)$ be the set of all elements $u$ of $\Om AS$
such that all factors of $u$ with length at most $n$ belong
to $L(\Cl X)$.
 
  \begin{Lemma}\label{l:labels-of-hatsigma2n}
    Let $\Cl X$ be a subshift.
    For every positive integer $n$,
    the equality $\hat\mu_n(\hat\Sigma_{2n}(\Cl X))=\Mir_{2n+1}(\Cl X)$ holds.
  \end{Lemma}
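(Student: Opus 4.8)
The plan is to reduce the statement to a density argument inside $\Om AS$ and then to identify $\Mir_{2n+1}(\Cl X)$ as the topological closure of its finite part. First I would observe that, since $\Sigma_{2n}(\Cl X)$ is a finite graph, the free profinite semigroupoid $\hat\Sigma_{2n}(\Cl X)$ is the profinite completion of the free semigroupoid $\Sigma_{2n}(\Cl X)^+$, so that $\Sigma_{2n}(\Cl X)^+$ is dense in $\hat\Sigma_{2n}(\Cl X)$. As $\hat\mu_n$ is continuous and $\hat\Sigma_{2n}(\Cl X)$ is compact, the image $\hat\mu_n(\hat\Sigma_{2n}(\Cl X))$ is compact, hence closed in the Hausdorff space $\Om AS$, and it equals the closure of $\hat\mu_n(\Sigma_{2n}(\Cl X)^+)=\mu_n(\Sigma_{2n}(\Cl X)^+)$. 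By Remark~\ref{r:words-recognized-by-sigma-n} this last set is exactly $\Mir_{2n+1}(\Cl X)\cap A^+$. Thus the lemma reduces to the equality
\[
\overline{\Mir_{2n+1}(\Cl X)\cap A^+}=\Mir_{2n+1}(\Cl X).
\]

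For the inclusion $\subseteq$ I would show that $\Mir_{2n+1}(\Cl X)$ is closed. The key topological input is that, for a fixed finite word $v$, the condition ``$v$ is a factor of $u$'' is clopen in $\Om AS$: the language of finite words containing $v$ as a factor is recognizable, so its closure in $\Om AS$ is clopen, and one checks that this closure coincides with $\{u\in\Om AS: u=xvy \text{ for some } x,y\in(\Om AS)^1\}$ by approximating $x$ and $y$ by finite words in one direction and extracting convergent subnets from a net of finite factorizations in the other. Since $\Mir_{2n+1}(\Cl X)$ is the intersection, over the finitely many words $v$ of length at most $2n+1$ not belonging to $L(\Cl X)$, of the complements of these clopen sets, it is closed; together with the trivial inclusion $\Mir_{2n+1}(\Cl X)\cap A^+\subseteq\Mir_{2n+1}(\Cl X)$ this yields $\overline{\Mir_{2n+1}(\Cl X)\cap A^+}\subseteq\Mir_{2n+1}(\Cl X)$.

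For the reverse inclusion I would leverage the same clopenness to obtain continuity of the bounded-length factor maps: for each $j$, the assignment sending $u$ to its set of factors of length $j$ is a continuous map into the finite discrete space $2^{A^j}$, being a boolean combination of the clopen conditions above. Given $u\in\Mir_{2n+1}(\Cl X)$, I pick a net of finite words $w_i\to u$, which exists because $A^+$ is dense in $\Om AS$. By continuity, for each $j\le 2n+1$ the set of factors of $w_i$ of length $j$ eventually equals that of $u$, which is contained in $L(\Cl X)$; since only finitely many values of $j$ must be controlled, for large $i$ all factors of $w_i$ of length at most $2n+1$ lie in $L(\Cl X)$, that is, $w_i\in\Mir_{2n+1}(\Cl X)\cap A^+$. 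Hence $u$ lies in the closure of $\Mir_{2n+1}(\Cl X)\cap A^+$, completing the argument.

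The main obstacle is precisely the topological input isolated above, namely the clopenness in $\Om AS$ of the factor conditions (equivalently, the continuity of the bounded-length factor maps); once this is in hand, everything else is a formal consequence of compactness, of the density of $\Sigma_{2n}(\Cl X)^+$ in $\hat\Sigma_{2n}(\Cl X)$, and of Remark~\ref{r:words-recognized-by-sigma-n}.
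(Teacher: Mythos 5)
Your proof is correct and follows essentially the same route as the paper's: reduce to the equality $\overline{\Mir_{2n+1}(\Cl X)\cap A^+}=\Mir_{2n+1}(\Cl X)$ via density of $\Sigma_{2n}(\Cl X)^+$ in $\hat\Sigma_{2n}(\Cl X)$, compactness, and Remark~\ref{r:words-recognized-by-sigma-n}, and then conclude from the fact that $\Mir_{2n+1}(\Cl X)$ is clopen and $A^+$ is dense in $\Om AS$. The only difference is that the paper simply asserts the clopenness of $\Mir_{2n+1}(\Cl X)$, whereas you supply the (correct) justification via the clopenness of factor conditions coming from recognizability.
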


  \begin{proof}
                We clearly have
                $\hat\mu_n(\Sigma_{2n}(\Cl X)^+)=\Mir_{2n+1}(\Cl X)\cap A^+$
                (cf.~Remark~\ref{r:words-recognized-by-sigma-n}).
                Noting that $\Mir_{2n+1}(\Cl X)$ is closed and open,
                that $A^+$ is  dense in $\Om AS$,
          and that $\Sigma_{2n}(\Cl X)^+$ is dense in
      $\hat\Sigma_{2n}(\Cl X)$, the lemma  follows immediately.
    \end{proof}

Note that
  $\Mir_1(\Cl X)
  \supseteq
  \Mir_2(\Cl X)
  \supseteq
  \Mir_3(\Cl X)
  \supseteq
  \cdots$  
  and $\Mir(\Cl X)=\bigcap_{n\geq 1}\Mir_n(\Cl X)$.
  Therefore, the image of $\hat\mu$ is contained in
  $\Mir(\Cl X)$, by Lemma~\ref{l:labels-of-hatsigma2n}.
  One actually has $\hat\mu(\varprojlim \hat\Sigma_{2n}(\Cl X))=\Mir(\Cl X)$
  (cf.~\cite[Proposition 4.5]{Almeida&ACosta:2007a}),
  but we shall not need this fact.

  The next two lemmas where observed
    in \cite[Lemmas 4.2 and 4.3]{Almeida&ACosta:2007a}.
    We introduce some notation. We denote by
    $|u|$ the length of a word in $A^+$, and let $|u|=+\infty$
    for $u\in\Om AS\setminus A^+$.

    \begin{Lemma}\label{l:facto-obvio}
      Consider a subshift $\Cl X$.  
      Let $q\colon x_{[-n,n-1]}\to y_{[-n,n-1]}$ be an edge of
      $\hat\Sigma_{2n}(\Cl X)$, where $x,y\in\Cl X$.
  Let $u=\hat\mu_n(q)$.
  If $k=\min\{|u|,n\}$ then $x_{[0,k-1]}$ is a prefix
  of $u$ and $y_{[-k,-1]}$ is a suffix of $u$.
\end{Lemma}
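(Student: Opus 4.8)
The plan is to establish the statement first for finite paths, by directly reading off the central labeling, and then to obtain the general case for an arbitrary edge $q$ of $\hat\Sigma_{2n}(\Cl X)$ by a density-and-continuity argument. What makes the reduction possible is that $\Sigma_{2n}(\Cl X)$ is a finite graph, so that $\hat\Sigma_{2n}(\Cl X)$ is the profinite completion of the free semigroupoid $\Sigma_{2n}(\Cl X)^+$ and the finite paths form a dense subset; moreover the source and target maps take values in the finite discrete vertex set $L_{2n}(\Cl X)$, and $\hat\mu_n$ is continuous. The single combinatorial observation I would isolate is that, viewing an edge $e$ of $\Sigma_{2n}(\Cl X)$ as a word of length $2n+1$, its central label $\mu_n(e)$ is at once the $(n+1)$-th letter of its source $\alpha(e)$ and the $n$-th letter of its target $\omega(e)$.

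For a finite path $p=e_1\cdots e_\ell$ from $x_{[-n,n-1]}$ to $y_{[-n,n-1]}$, with intermediate vertices $v_0,\dots,v_\ell$, I would use that consecutive vertices overlap in $2n-1$ letters: $v_j$ arises from $v_{j-1}$ by deleting the first letter and appending one at the end, so the $i$-th letter of $v_j$ is the $(i+j)$-th letter of $v_0$ whenever $i+j\le 2n$. Reading the $j$-th letter of $w=\mu_n(p)$ as the $(n+1)$-th letter of $v_{j-1}$ then identifies it with $x_{j-1}$ for $j\le n$, giving $x_{[0,\min\{\ell,n\}-1]}$ as a prefix of $w$; reading symmetrically from the target end (the $j$-th letter of $w$ as the $n$-th letter of $v_j$) identifies the last letters of $w$ with $y_{-1},y_{-2},\dots$, giving $y_{[-\min\{\ell,n\},-1]}$ as a suffix. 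Since $k=\min\{|w|,n\}=\min\{\ell,n\}$, this is precisely the claim for finite paths.

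To pass to an arbitrary $q$, I would write $q=\lim q_i$ with $q_i\in\Sigma_{2n}(\Cl X)^+$; since the vertex set is finite and discrete, one may assume every $q_i$ runs from $x_{[-n,n-1]}$ to $y_{[-n,n-1]}$, and continuity of $\hat\mu_n$ gives $u=\lim u_i$ with $u_i=\mu_n(q_i)$. If $u\in A^+$ then $q$ is itself a finite path and the finite case applies; otherwise $u$ is infinite, $k=n$, and — because each set $\{v\in A^+:|v|\le m\}$ is finite, hence closed, and excludes $u$ — the lengths $|u_i|$ diverge, so I may assume $|u_i|\ge n$. By the finite case the length-$n$ prefix and suffix of every $u_i$ are $x_{[0,n-1]}$ and $y_{[-n,-1]}$. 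Since taking the length-$n$ prefix (respectively suffix) is realized by a continuous homomorphism of $\Om AS$ onto the finite semigroup of words of length at most $n$ under prefix- (respectively suffix-) truncated concatenation — agreeing on infinite elements with the unique length-$n$ prefix (suffix) whose existence was recalled above — these values persist in the limit, yielding the claim. I expect the one genuinely delicate point to be this infinite case: it is essential that the approximating finite paths eventually share both endpoints with $q$ and eventually have length at least $n$, the latter resting on the divergence of $|u_i|$ that is forced by the topological isolation of finite words in $\Om AS$.
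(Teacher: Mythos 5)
Your proposal is correct. Note that the paper itself contains no proof of this lemma, which is quoted from \cite[Lemmas 4.2 and 4.3]{Almeida&ACosta:2007a}; your argument --- the overlap computation identifying $\mu_n(e)$ with the $(n+1)$-th letter of $\alpha(e)$ and the $n$-th letter of $\omega(e)$, giving the claim for finite paths, followed by density of $\Sigma_{2n}(\Cl X)^+$ in $\hat\Sigma_{2n}(\Cl X)$, continuity of $\hat\mu_n$, discreteness of the finite vertex set, and persistence of length-$n$ prefixes and suffixes in the limit via finite quotients of $\Om AS$ (with the cases $u\in A^+$ and $u$ infinite correctly separated, the former forcing $q$ to be a finite path by isolation of finite words and finiteness of the set of paths of a given length) --- is precisely the standard argument that the citation alludes to.
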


\begin{Lemma}\label{l:vertices-eti}
  Consider a subshift $\Cl X$.
  Let $q\colon x\to y$ be an edge of $\varprojlim \hat\Sigma_{2n}(\Cl X)$.
  Let $u=\hat\mu(q)$. If
  $u\in\Om AS\setminus A^+$ then
  $\ori u=(x_i)_{i\in\ZZ^+_0}$ and
  $\ole u=(y_i)_{i\in\ZZ^-}$.
  If $u\in A^+$ then
  $q$ is the unique edge of $\si X^+$ from $x$ to $\sigma^{|u|}(x)$.
\end{Lemma}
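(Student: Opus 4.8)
The plan is to reduce both cases to the finite Rauzy graphs through the projections $\hat p_n$, using $\hat\mu=\hat\mu_n\circ\hat p_n$. Write $q_n=\hat p_n(q)$; recalling that $p_n$ sends a vertex $x\in\Cl X$ to $x_{[-n,n-1]}$, the edge $q_n$ of $\hat\Sigma_{2n}(\Cl X)$ runs from $x_{[-n,n-1]}$ to $y_{[-n,n-1]}$ and satisfies $\hat\mu_n(q_n)=u$ for every $n$. When $u\in\Om AS\setminus A^+$, I would simply invoke Lemma~\ref{l:facto-obvio} at each order $n$. Since $|u|=+\infty$, the integer $k=\min\{|u|,n\}$ equals $n$, so the lemma gives that $x_{[0,n-1]}$ is a prefix and $y_{[-n,-1]}$ a suffix of $u$, for all $n\ge1$. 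By the defining property of $\ori u$ and $\ole u$, this yields at once $\ori u=(x_i)_{i\in\ZZ^+_0}$ and $\ole u=(y_i)_{i\in\ZZ^-}$.

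The case $u\in A^+$ is the substantial one, and its crux is to show that $q\in\Sigma(\Cl X)^+$, that is, that $q$ is a finite path. Granting this, the conclusion is immediate: every vertex $z$ of $\Sigma(\Cl X)$ emits the single edge $(z,\sigma(z))$, so the unique path of length $j$ starting at $x$ is $x\to\sigma(x)\to\cdots\to\sigma^j(x)$, whose image under $\hat\mu$ is the word $x_{[0,j-1]}$ of length $j$; comparison with $\hat\mu(q)=u$ forces $j=|u|$ and identifies $q$ with the path from $x$ to $\sigma^{|u|}(x)$ asserted in the statement. Since $\Sigma(\Cl X)^+=\varprojlim\Sigma_{2n}(\Cl X)^+$, it suffices to prove that each $q_n$ lies in $\Sigma_{2n}(\Cl X)^+$, knowing only that $\hat\mu_n(q_n)=u\in A^+$.

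For this finiteness at order $n$ I would introduce a length homomorphism. Let $\lambda\colon\Om AS\to\Om{\{c\}}S$ be induced by sending every letter of $A$ to the generator $c$, and $\lambda_n\colon\hat\Sigma_{2n}(\Cl X)\to\Om{\{c\}}S$ by sending every edge of $\Sigma_{2n}(\Cl X)$ to $c$; as $\mu_n$ labels each edge by a single letter, the two agree on generators and hence $\lambda\circ\hat\mu_n=\lambda_n$. The elementary fact I would first record is that $w\in A^+$ if and only if $\lambda(w)\in c^+$ (the finite, isolated powers of $c$): for infinite $w$ the unique prefix of length $m$ gives a factorization $w=p_ms_m$, whence $\lambda(w)\in c^m\,\Om{\{c\}}S$ for every $m$, so $\lambda(w)$ cannot be a finite power of $c$. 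Applying this to $u$ gives $\lambda_n(q_n)=\lambda(u)\in c^+$. Finally, if $q_n$ were not a finite path it would be a limit of finite paths whose lengths are necessarily unbounded, since a convergent net of paths of bounded length is eventually constant ($\Sigma_{2n}(\Cl X)$ being finite and its finite paths isolated); then $\lambda_n(q_n)$ would be a limit of powers $c^m$ with $m\to\infty$, hence an infinite element of $\Om{\{c\}}S$, contradicting $\lambda_n(q_n)\in c^+$. Thus $q_n\in\Sigma_{2n}(\Cl X)^+$, as required.

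I expect this last finiteness step to be the only real obstacle; Case~1 and the determination of the target in Case~2 are purely formal once it is in hand, following from Lemma~\ref{l:facto-obvio} and the one-out-edge structure of $\Sigma(\Cl X)$.
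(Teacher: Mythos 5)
Your proof is correct. Note that this paper does not actually prove Lemma~\ref{l:vertices-eti}: it is quoted, together with Lemma~\ref{l:facto-obvio}, from \cite[Lemmas 4.2 and 4.3]{Almeida&ACosta:2007a}, so there is no internal proof to compare against; your argument stands as a sound self-contained derivation. The first case is handled exactly as one would expect: project via $\hat p_n$, use $\hat\mu=\hat\mu_n\circ\hat p_n$, and apply Lemma~\ref{l:facto-obvio} with $k=n$ for every $n$. You also correctly isolate the crux of the second case, namely that $\hat\mu_n(q_n)\in A^+$ forces $q_n\in\Sigma_{2n}(\Cl X)^+$, and the reduction through $\Sigma(\Cl X)^+=\varprojlim\Sigma_{2n}(\Cl X)^+$ is legitimate because the connecting graph homomorphisms preserve path length. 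One simplification worth noting: the detour through the one-generator semigroup $\Om{\{c\}}S$ is unnecessary. Since $\mu_n$ assigns a single letter to each edge, the label of a finite path has length equal to the length of the path; so if $q_n$ were not a finite path, it would (by your own subnet argument, using that bounded-length path sets are finite) be a limit of finite paths $w_i$ whose lengths tend to infinity, and then $u=\hat\mu_n(q_n)=\lim\mu_n(w_i)$ would be a limit of words of unbounded length. But $u\in A^+$ is isolated in $\Om AS$, so the net $(\mu_n(w_i))$ would have to be eventually equal to $u$, of fixed length $|u|$ --- a contradiction reached one homomorphism earlier than in your version, with the same isolation argument. Finally, your reading of the conclusion (that $q$ is the unique path of length $|u|$ starting at $x$, which necessarily ends at $\sigma^{|u|}(x)$) is the right interpretation of the statement's slightly loose phrase ``the unique edge of $\si X^+$ from $x$ to $\sigma^{|u|}(x)$'', since in the periodic case coterminal finite paths of different lengths do exist.
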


    We denote by $\Pi_{2n}(\Cl X)$
    the fundamental groupoid of $\Sigma_{2n}(\Cl X)$,
    and by $h_n$ the natural homomorphism
$\Sigma_{2n}(\Cl X)\to \Pi_{2n}(\Cl X)$.
The graph homomorphism
$p_{m,n}\colon \Sigma_{2m}(\Cl X)\to \Sigma_{2n}(\Cl X)$
induces the groupoid
homomorphism $q_{m,n}=\Pi(p_{m,n})\colon \Pi_{2m}(\Cl X)\to \Pi_{2n}(\Cl X)$,
characterized by the equality $q_{m,n}\circ h_m=h_n\circ p_{m,n}$.
Let $\hat\Pi_{2n}(\Cl X)$ be the profinite completion of
$\Pi_{2n}(\Cl X)$,
and  let $\hat h_n\colon \hat\Sigma_{2n}(\Cl X)\to \hat\Pi_{2n}(\Cl X)$
and $\hat q_{m,n}\colon \hat\Pi_{2m}(\Cl X)\to \hat\Pi_{2n}(\Cl X)$ be the
natural homomorphisms respectively induced by $h_n$ and $q_{m,n}$.
Then the following diagram commutes:
  \begin{equation}\label{eq:definition-of-h}
    \begin{split}
          \xymatrix@C=4em@R=3em{
   \hat\Sigma_{2m}(\Cl X)
   \ar[d]_{\hat h_m}
   \ar[r]^{\hat p_{m,n}}
   &
   \hat\Sigma_{2n}(\Cl X)   
   \ar[d]^{\hat h_n}
   \\
   \hat\Pi_{2m}(\Cl X)
   \ar[r]_{\hat q_{m,n}}
   &\hat\Pi_{2n}(\Cl X).
    }
    \end{split}
  \end{equation}
  The family $(\hat q_{m,n})_{m,n}$ defines an inverse system of profinite
  groupoids.
We denote by $\hat h$ the
continuous semigroupoid homomorphism from $\varprojlim\hat\Sigma_{2n}(\Cl X)$
to $\varprojlim\hat\Pi_{2n}(\Cl X)$ established by the
commutativity of Diagram~\eqref{eq:definition-of-h}.

For the remainder of this paper, we need to deal with the local
semigroups of the various semigroupoids defined in this section.
Given $n$, we denote respectively by
$\Sigma_{2n}(\Cl X,x)^+$, $\hat\Sigma_{2n}(\Cl X,x)$,
$\Pi_{2n}(\Cl X,x)$, $\hat\Pi_{2n}(\Cl X,x)$ the local semigroups at vertex
$p_{2n}(x)=x_{[-n,n-1]}$ of $\Sigma_{2n}(\Cl X)^+$,
$\hat\Sigma_{2n}(\Cl X)$, $\Pi_{2n}(\Cl X)$ and $\hat\Pi_{2n}(\Cl X)$.

\begin{Rmk}\label{r:profinite-completion-of-local-group}
  If $\Cl X$ is irreducible, then
  $\hat\Pi_{2n}(\Cl X,x)$ is the profinite completion of
  the fundamental group of the strongly connected
  graph $\Sigma_{2n}(\Cl X)$
  (cf.~Lemma~\ref{l:completion-vs-fundamental group}).
\end{Rmk}

\section{Return words in the study
  of \protect{$G(\Cl X)$ in the minimal case}}

Consider a subshift $\Cl X$ of $\z A$.
Let $u\in L(\Cl X)$.
The \emph{return words}\footnote{What we call \emph{return words}
  is sometimes in the literature
  designated \emph{first return words},
  as is the case of
  the article~\cite{Berthe&Felice&Dolce&Leroy&Perrin&Reutenauer&Rindone:2015},
  which is cited later in this paper.
  The terminology that we adopt appears
  for instance
  in~\cite{Durand:1998,Durand&Host&Skau:1999,Balkova&Pelantova&Steiner:2008}.}
of $u$ in~$\Cl X$ are the elements of the set $R(u)$
of words $v\in A^+$ such that $vu\in L(\Cl X)\cap uA^+$
and such that $u$ occurs in $vu$ only as both prefix and suffix.
The characterization
of minimal subshifts via the notion of uniform recurrence
yields that the subshift $\Cl X$ is minimal if and only if,
for every $u\in L(\Cl X)$, the set  $R(u)$ is finite.

Let $n\geq 0$ be such that $|u|\geq n$. Consider words $u_1$ and $u_2$
with $u=u_1u_2$ and $|u_1|=n$.
Let $R(u_1,u_2)$ be the set of words $v$
such that $u_1vu_2\in L(\Cl X)$ and $u_1v\in R(u)u_1$.
In other words, we have $R(u_1,u_2)=u_1^{-1}(R(u)u_1)$. In particular,
$R(u_1,u_2)$ and $R(u)$ have the same cardinality.
The elements of $R(u_1,u_2)$ are callled
in~\cite{Almeida&ACosta:2013}
\emph{$n$-delayed return words of~$u$} in~\Cl X, and
\emph{return words of $u_1.u_2$} in \cite{Durand&Host&Skau:1999}.
Note that $R(u_1,u_2)$ is a code (actually, a circular
code~\cite[Lemma 17]{Durand&Host&Skau:1999}).

Fix $x\in \Cl X$. Denote by $R_n(x)$ the set
$R(x_{[-n,-1]},x_{[0,n-1]})$.
Clearly, if $\Cl X$ is a periodic subshift with period $N$,
then the elements of $R_n(x)$ have length at most $N$.
On the other hand, we have the following result.

\begin{Lemma}[{cf.~\cite[Lemma
    3.2]{Durand:1998}}]\label{l:length-of-return-words-tends-to-infinity}
  If $\Cl X$ is a minimal non-periodic subshift
  then $\lim_{n\to\infty}\min \{|r|: r\in R_n(x)\}=\infty$ for every
  $x\in\Cl X$.\footnote{Lemma~\ref{l:length-of-return-words-tends-to-infinity}
is taken from~\cite[Lemma 3.2]{Durand:1998},
but the limit which appears explicitly
in~\cite[Lemma 3.2]{Durand:1998} 
    is  $\lim_{n\to\infty}\min \{|r|: r\in R(x_{[0,n-1]})\}=\infty$.
    However, $R(z,t)$ is clearly contained in
    the subsemigroup of $A^+$ generated by $R(t)$.
  In particular,
    $\min \{|r|: u\in R_n\}\geq \min \{|r|: u\in R(x_{[0,n-1]})\}$,
  and so our formulation of
  Lemma~\ref{l:length-of-return-words-tends-to-infinity}
  follows immediately from the one in~\cite[Lemma 3.2]{Durand:1998}.}
\end{Lemma}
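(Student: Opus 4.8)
The plan is to deduce the statement from the cited result of Durand \cite[Lemma~3.2]{Durand:1998}, which establishes the analogous limit for the ordinary return words of the prefixes $x_{[0,n-1]}$, by comparing lengths. Fix $x\in\Cl X$, and for each $n$ set $z=x_{[-n,-1]}$ and $t=x_{[0,n-1]}$, so that $u=zt=x_{[-n,n-1]}$ and $R_n(x)=R(z,t)$.

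The key step is the combinatorial containment asserting that every $n$-delayed return word of $u$ is a nonempty product of ordinary return words of the shorter word $t$. I would argue this dynamically: by definition $v\in R(z,t)$ exactly when $zv=wz$ for some $w\in R(u)$; here $w$ records the gap between two consecutive occurrences of $u$ in $\Cl X$, and $v$ is the corresponding word read from the distinguished position (where $t$ begins) of one occurrence to that of the next. Since at every occurrence of the pointed word $z.t$ the factor $t$ begins at the distinguished position, while there may be further occurrences of $t$ strictly between two consecutive distinguished positions, the word $v$ splits at precisely those intermediate occurrences of $t$ into a concatenation of gaps between consecutive occurrences of $t$, each of which is by definition an element of $R(t)$. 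Hence $R_n(x)=R(z,t)$ is contained in the subsemigroup of $A^+$ generated by $R(t)$.

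It then follows that each element of $R_n(x)$, being a nonempty product of return words of $t$, has length at least that of the shortest return word of $t$, so that
\begin{equation*}
  \min\{|r|:r\in R_n(x)\}\;\geq\;\min\{|r|:r\in R(x_{[0,n-1]})\}.
\end{equation*}
Letting $n\to\infty$ and invoking \cite[Lemma~3.2]{Durand:1998}, which asserts that the right-hand side tends to infinity because $\Cl X$ is minimal and non-periodic, gives $\lim_{n\to\infty}\min\{|r|:r\in R_n(x)\}=\infty$, as required. The one delicate point is the containment established in the second paragraph; it rests on the observation that $t$ is the suffix of $u$ occurring at the distinguished position of each pointed occurrence, so that a return of the pointed word is an integer concatenation of returns of $t$. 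Everything else is immediate from the definitions and the cited limit.
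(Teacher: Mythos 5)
Your proof is correct and follows essentially the same route as the paper's own justification (given in its footnote): invoke Durand's Lemma~3.2 for the ordinary return words of the prefixes $x_{[0,n-1]}$, and reduce to it via the containment of $R(z,t)=R_n(x)$ in the subsemigroup of $A^+$ generated by $R(t)$, which forces $\min\{|r|:r\in R_n(x)\}\geq\min\{|r|:r\in R(x_{[0,n-1]})\}$. The only difference is that you spell out the splitting of a delayed return word at the intermediate occurrences of $t$, a point the paper dismisses as ``clearly'' true; your elaboration of it is accurate.
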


Let $u\in R_n(x)$.
The word $w=x_{[-n,-1]}ux_{[0,n-1]}$
belongs to $L(\Cl X)$. Its prefix
and its suffix of length $2n$
is the word $x_{[-n,n-1]}$.
Hence, the graph $\Sigma_{2n}(\Cl X)$ has a cycle $s$
rooted at the vertex $x_{[-n,n-1]}$
such that $\mu_n(s)=u$. Since $\mu_n$ is faithful, we may
therefore define a function
$\lambda_n\colon R_n(x)\to\Sigma_{2n}(\Cl X,x)^+$ such that
$\mu_n\circ\lambda_n$ is the identity $1_{R_n(x)}$ on $R_n(x)$.

To extract consequences from these facts at the level
of the free profinite semigroup $\Om AS$, we use the following
theorem from \cite{Margolis&Sapir&Weil:1995}.

\begin{Thm}\label{t:codes-generate-free-profinite-groups}
  If $X$ is a finite code of $A^+$, then the closed subsemigroup of $\Om AS$
  generated by $X$ is a profinite semigroup freely generated by $X$.
\end{Thm}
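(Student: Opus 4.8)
The plan is to identify $\overline{\langle X\rangle}$ with the free profinite semigroup on a set in bijection with $X$. Fix an alphabet $B$ together with a bijection $b\mapsto x_b$ onto $X$. This map extends uniquely to a continuous homomorphism $\varphi\colon\Om BS\to\Om AS$, and since $\Om BS=\overline{B^+}$ and $\varphi$ is continuous, its image is exactly the closed subsemigroup $\overline{\langle X\rangle}$ generated by $X$. Because $X$ is a code, the restriction $\varphi|_{B^+}\colon B^+\to A^+$ is injective with image $X^+$, so $\varphi$ restricts to an isomorphism $B^+\to X^+$ of free semigroups. It therefore suffices to prove that $\varphi$ is injective; then $\varphi$ is a continuous bijective homomorphism of compact semigroups onto $\overline{\langle X\rangle}$, hence an isomorphism, exhibiting the latter as the free profinite semigroup on $X$.

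To prove injectivity, since $\Om BS$ is profinite it is enough to show that every continuous homomorphism $\psi\colon\Om BS\to F$ into a finite semigroup $F$ factors as $\psi=\theta\circ\varphi$ for some continuous homomorphism $\theta\colon\overline{\langle X\rangle}\to F$: granting this, two distinct points of $\Om BS$ are separated by some such $\psi$, hence by $\varphi$. Fix $\psi$. Using the \emph{decoding} homomorphism $d\colon X^+\to B^+$ that sends a word to its unique factorization over $X$ (well defined and multiplicative precisely because $X$ is a code), put $f=\psi|_{B^+}\circ d\colon X^+\to F$. This is a homomorphism satisfying $f\circ\varphi|_{B^+}=\psi|_{B^+}$, so it remains to extend $f$ continuously to $\overline{X^+}=\overline{\langle X\rangle}$.

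The core of the argument, and the step I expect to be the main obstacle, is to show that $f$ is \emph{recognizable} over $A$, in the sense that it factors through the trace on $X^+$ of a continuous homomorphism into a finite semigroup. Concretely, I would produce a finite semigroup $T$ and a continuous homomorphism $\pi\colon\Om AS\to T$ such that $f=g\circ\pi|_{X^+}$ for some map $g$ defined on the subsemigroup $\pi(X^+)$ of $T$. This is where the hypothesis that $X$ is a \emph{finite} code is essential: $X^+$ is then a rational language and the parsing of a word into its $X$-factorization is computed by a finite transducer, whose transition semigroup, suitably combined with $\psi$, furnishes $T$, $\pi$ and $g$. That $g$ is a homomorphism on $\pi(X^+)$ follows from the multiplicativity of $f$, since $g(\pi(u)\pi(v))=g(\pi(uv))=f(uv)=f(u)f(v)$.

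Granting this, the extension is immediate. As $T$ is finite, $\pi(\overline{X^+})\subseteq\overline{\pi(X^+)}=\pi(X^+)$, so $\theta=g\circ\pi|_{\overline{\langle X\rangle}}$ is a well-defined continuous homomorphism $\overline{\langle X\rangle}\to F$ with $\theta\circ\varphi|_{B^+}=\psi|_{B^+}$; by density of $B^+$ in $\Om BS$ this yields $\theta\circ\varphi=\psi$, completing the proof. The remaining verifications, namely that $d$ is multiplicative, that the image of $\varphi$ is $\overline{\langle X\rangle}$, and the continuity bookkeeping, are routine compactness arguments.
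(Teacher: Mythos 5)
The paper itself contains no proof of this statement: Theorem~\ref{t:codes-generate-free-profinite-groups} is imported from \cite{Margolis&Sapir&Weil:1995}, so there is no internal argument to compare yours against; your proposal must be judged on its own merits. Its overall architecture is correct and well organized: the identification of the image of $\varphi$ with $\overline{\langle X\rangle}$, the reduction of the theorem to injectivity of $\varphi$, and the further reduction of injectivity to factoring every continuous homomorphism $\psi\colon\Om BS\to F$ ($F$ finite) through $\varphi$ are all sound, as is the final extension argument via the finite quotient $T$. The step you single out as the main obstacle---recognizability of $f=\psi|_{B^+}\circ d$---is indeed the crux, but it can be closed much more directly than by your transducer sketch: for each $t\in F$, the fiber is $f^{-1}(t)=\varphi\bigl(\psi^{-1}(t)\cap B^+\bigr)$, the image of a regular language of $B^+$ under the homomorphism $B^+\to A^+$, $b\mapsto x_b$, hence a regular language of $A^+$ by the standard closure of regular languages under homomorphic images (finiteness of $X$ makes $B$ finite; the code property is what makes $f$ well defined and the fibers pairwise disjoint). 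Taking $\pi\colon A^+\to T$ to be a homomorphism onto a finite semigroup recognizing the finitely many languages $f^{-1}(t)$ simultaneously (for instance the product of their syntactic homomorphisms), and then its unique continuous extension $\Om AS\to T$, yields exactly the factorization $f=g\circ\pi|_{X^+}$ that you require; your verification that $g$ is multiplicative on $\pi(X^+)$ and the concluding density argument then go through as written. So the proposal is correct once this step is filled in, and the finite-transducer machinery you invoke, while workable, is unnecessary.
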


Assuming that $\Cl X$ is a minimal subshift,
as we do throughout along this section, the
code $R_n(x)$ is finite.
Therefore it follows from
Theorem~\ref{t:codes-generate-free-profinite-groups}
that the profinite subsemigroup $\ov{\langle R_n(x)\rangle}$ of $\Om AS$
is freely generated by $R_n(x)$,
and so the mapping $\lambda_n$ extends in a unique way to a
continuous homomorphism
$\hat\lambda_n\colon \ov{\langle R_n(x)\rangle}\to \hat\Sigma_{2n}(\Cl X,x)$
of profinite semigroups.
Note that the following equality holds by definition of
$\lambda_n$:
\begin{equation}\label{eq:mu_n_lambda_n}
  \hat\mu_n\circ\hat\lambda_n=1_{\ov{\langle R_n(x)\rangle}}.
\end{equation}

If $m\geq n$, then the inclusion $R_m(x)\subseteq \langle R_n(x)\rangle$
clearly holds.

\begin{Lemma}\label{l:intersection-limit-of-return-words}
  Let $\Cl X$ be a minimal non-periodic subshift and let
  $x\in\Cl  X$. Then
  we have  $\bigcap_{n\ge 1} \ov{\langle  R_n(x)\rangle}=G_x$.
\end{Lemma}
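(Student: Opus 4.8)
The plan is to prove the two inclusions separately. Throughout write $P_n=\ov{\langle R_n(x)\rangle}$ and $v=x_{[-n,n-1]}=p_{2n}(x)$; recall that $R_m(x)\subseteq\langle R_n(x)\rangle$ for $m\ge n$, so the family $(P_n)_n$ is decreasing and $\bigcap_nP_n$ is a closed subsemigroup of $\Om AS$. For the inclusion $\bigcap_nP_n\subseteq G_x$, the starting observation is that, by \eqref{eq:mu_n_lambda_n}, $\hat\mu_n$ is a left inverse of $\hat\lambda_n$, so $P_n=\hat\mu_n(\hat\lambda_n(P_n))\subseteq\hat\mu_n(\hat\Sigma_{2n}(\Cl X))=\Mir_{2n+1}(\Cl X)$ by Lemma~\ref{l:labels-of-hatsigma2n}. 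Hence any $g\in\bigcap_nP_n$ lies in $\bigcap_n\Mir_{2n+1}(\Cl X)=\Mir(\Cl X)=\ov{L(\Cl X)}$, by Theorem~\ref{t:JX-in-minimal-case}. Such a $g$ must moreover be infinite: a finite word lying in $P_n$ belongs to $\langle R_n(x)\rangle$ (finite words are isolated), hence is a nonempty product of elements of $R_n(x)$, which is impossible once $n$ is large enough that $\min\{|r|:r\in R_n(x)\}>|g|$, by Lemma~\ref{l:length-of-return-words-tends-to-infinity}. Therefore $g\in\ov{L(\Cl X)}\setminus A^+=J(\Cl X)$.

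It remains, for this first inclusion, to identify $\ori g$ and $\ole g$. For each $n$ the edge $\hat\lambda_n(g)$ is a loop of $\hat\Sigma_{2n}(\Cl X)$ at the vertex $v$, and $\hat\mu_n(\hat\lambda_n(g))=g$ is infinite; applying Lemma~\ref{l:facto-obvio} with $k=n$ shows that $x_{[0,n-1]}$ is a prefix and $x_{[-n,-1]}$ a suffix of $g$. Letting $n\to\infty$ gives $\ori g=(x_i)_{i\ge0}$ and $\ole g=(x_i)_{i<0}$, that is, $\li g=x$, whence $g\in G_x$ by Lemma~\ref{l:parametrization-of-R-L-classes}.

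For the reverse inclusion $G_x\subseteq P_n$ (with $n$ fixed), I would first record that $\hat\mu_n$ restricts to an isomorphism of the local semigroup $\hat\Sigma_{2n}(\Cl X,x)$ onto $P_n$: it is injective there because it is faithful and all loops at $v$ are coterminal, while its image is $\ov{\hat\mu_n(\Sigma_{2n}(\Cl X,x)^+)}=\ov{\langle R_n(x)\rangle}=P_n$, since the labels of the loops of $\Sigma_{2n}(\Cl X)^+$ at $v$ are exactly the products of return words. Thus $g\in P_n$ if and only if $g$ is the $\hat\mu_n$-image of a loop at $v$, and it suffices to produce such a loop. Using that $\Cl X$ is minimal, the biinfinite word $x$ decomposes through the successive occurrences of $v$ as $\ole x=\cdots r_{-2}r_{-1}$ and $\ori x=r_0r_1\cdots$ with each $r_i\in R_n(x)$, and the finite products $x_{[m_a,m_b-1]}=r_a\cdots r_{b-1}$ (where the $m_i$ are the positions of the occurrences) are precisely the labels of the loops at $v$. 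The plan is to approximate $g$ by such labels taken along occurrences of $v$ at which $x$ agrees with itself on long two-sided windows, which exist by uniform recurrence, so that the limit has the prescribed $\ori{}=(x_i)_{i\ge0}$ and $\ole{}=(x_i)_{i<0}$.

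The hard part will be this last step: showing that the loop labels realize \emph{every} element of $G_x$, not merely some of them. The difficulty is genuine, since for a single $n$ an infinite element of $P_n$ is only required to share with $x$ a prefix and a suffix of length $n$, so $P_n$ is far larger than $G_x$ and the naive one-sided limits of the products $r_0\cdots r_{b-1}$ need not exhaust the $\Cl H$-class. I expect to resolve it by passing, through the isomorphism $\Om{R_n(x)}S\cong P_n$ furnished by Theorem~\ref{t:codes-generate-free-profinite-groups}, to the derived (return) subshift $\Cl Y$ over the alphabet $R_n(x)$ that codes the occurrences of $v$ in $x$: the subshift $\Cl Y$ is again minimal, the coding embeds $\ov{L(\Cl Y)}$ into $\ov{L(\Cl X)}$ and hence $J(\Cl Y)$ into $J(\Cl X)$, and it carries the maximal subgroup of $J(\Cl Y)$ at the point coding $x$ into $G_x$. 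The crux is to prove that this last map is onto, which identifies $G_x$ with a (closed) subgroup of $P_n$ and so yields $G_x\subseteq P_n$ for every $n$, completing the proof.
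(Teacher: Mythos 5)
Your proof of the inclusion $\bigcap_{n\ge1}\ov{\langle R_n(x)\rangle}\subseteq G_x$ is correct and close in spirit to the paper's: the paper obtains $I\subseteq\Mir(\Cl X)$ and excludes finite words in the same way, but then identifies $\li g=x$ by the elementary observation that, for $m$ large, $R_m(x)\subseteq x_{[0,n-1]}A^+\cap A^+x_{[-n,-1]}$, whereas you route this through $\hat\lambda_n$ and Lemma~\ref{l:facto-obvio}; both work.

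The reverse inclusion $G_x\subseteq\ov{\langle R_n(x)\rangle}$ is where your proposal breaks down, and the very first step you ``record'' is false. The labels of the loops of $\Sigma_{2n}(\Cl X)^+$ at $v=x_{[-n,n-1]}$ are \emph{not} exactly the products of return words: by the reasoning of Remark~\ref{r:words-recognized-by-sigma-n}, a word $w$ labels such a loop if and only if every factor of length at most $2n+1$ of $x_{[-n,-1]}\,w\,x_{[0,n-1]}$ lies in $L(\Cl X)$, whereas membership in $\langle R_n(x)\rangle$ forces the \emph{whole} word $x_{[-n,-1]}\,w\,x_{[0,n-1]}$ to lie in $L(\Cl X)$; the Rauzy graph only sees the subshift of finite type approximating $\Cl X$ at level $2n+1$. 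Concretely, for the Fibonacci subshift with $n=1$ and $x_{[-1,0]}=aa$ one has $R_1(x)=\{aba,ababa\}$, and $w=abababa$ labels a loop at $aa$ (its windows $aab,aba,bab,baa$ are all legal), yet $w\notin\langle R_1(x)\rangle$, since every product of two return words contains $aa$ while $w$ does not (indeed $a\,w\,a\notin L(\Cl X)$ because $ababab\notin L(\Cl X)$). As finite words are isolated, $w\notin\ov{\langle R_1(x)\rangle}$ either, so $\hat\mu_n(\hat\Sigma_{2n}(\Cl X,x))$ strictly contains $\ov{\langle R_n(x)\rangle}$: your claimed isomorphism fails, and with it the pivot of your strategy, namely the equivalence ``$g\in P_n$ iff $g$ is the $\hat\mu_n$-image of a loop at $v$''. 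Consequently, producing a loop labelled by a given $g\in G_x$ (which is essentially the content of Theorem~\ref{t:isomorphism}) would prove nothing about membership in $P_n$. Beyond this, the derived-subshift argument you sketch is explicitly left open --- you say yourself that ``the crux is to prove that this last map is onto''. Note that the paper does not reprove this inclusion either: it quotes it from \cite[Lemma 5.1]{Almeida&ACosta:2013}, so what your proposal leaves unproven is precisely the content of that citation; any honest repair must approximate elements of $G_x$ by products of return words themselves (compare Proposition~\ref{p:number-of-generators}), not merely by labels of loops in a fixed Rauzy graph.
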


\begin{proof}  
  Denote by $I$ the intersection 
  $\bigcap_{n\ge 1} \ov{\langle R_n(x)\rangle}$.
    The inclusion $G_x\subseteq I$ appears
    in~\cite[Lemma 5.1]{Almeida&ACosta:2013}.
    Let us show the reverse inclusion.  
  If $w$ is an element of $\langle R_n(x)\rangle$, then
  it labels a closed path of
  $\Sigma_{2n}(\Cl X)$ at $x_{[-n,n-1]}$.
  Therefore, every factor of $w$ of length at most $2n+1$ 
  belongs to $L(\Cl X)$.
  Since $w$ is an arbitrary element of $\langle R_n(x)\rangle$,
  this implies that
  every factor of length at
  most $2n+1$ of an element of $\ov{\langle R_n(x)\rangle}$
  belongs to $L(\Cl X)$. Therefore, if
  $u\in I$,
  then every finite factor of $u$ belongs to $L(\Cl X)$.
  On the other hand,
  by Lemma~\ref{l:length-of-return-words-tends-to-infinity}
  the elements of $I$ do not belong to $A^+$.
  We conclude
  from Theorem~\ref{t:JX-in-minimal-case}
  that $I\subseteq J(\Cl X)$.
  Let $n>0$.
  By Lemma~\ref{l:length-of-return-words-tends-to-infinity},
  there is $m>n$ such that the length of every element of $R_m(x)$
  is greater than $n$.
  Since the elements of $R_m(x)$ label closed paths at
  $x_{[-m,m-1]}$, we know that
  $R_m(x)\subseteq x_{[0,n-1]}A^+\cap A^+ x_{[-n,-1]}$.
  Hence, we have
  $I\subseteq \ov{\langle R_m(x)\rangle}\subseteq x_{[0,n-1]}\Om
  AS\cap \Om AS x_{[-n,-1]}$. Since $n$ is arbitrary, we deduce
  from the definition of $G_x$ that $I\subseteq G_x$.
\end{proof}

If $\Cl X=\{x\}$ is the singleton periodic subshift
given by $x=\cdots aaa.aaa\cdots$, then $R_n(x)=\{a\}$ for all $n$,
and Lemma~\ref{l:intersection-limit-of-return-words} does not hold in
this case. However, denoting by $\ov{\langle R_n(x)\rangle}_\infty$
the profinite semigroup $\ov{\langle R_n(x)\rangle}\setminus A^+$,
we get the following result,
which can be easily seen
to apply to periodic subshifts.

\begin{Lemma}\label{l:reformulation-intersection-limit-of-return-words}
  Let $\Cl X$ be a minimal subshift and let $x\in \Cl X$. Then
  we have
  $\bigcap_{n\ge 1} \ov{\langle  R_n(x)\rangle}_\infty=G_x$.\qed
\end{Lemma}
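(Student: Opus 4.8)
The plan is to treat the non-periodic and periodic cases separately, since the singleton example discussed before the statement shows that it is precisely the periodic case that forces the passage from $\ov{\langle R_n(x)\rangle}$ to $\ov{\langle R_n(x)\rangle}_\infty$.

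First I would dispose of the non-periodic case, where the result is essentially a restatement of Lemma~\ref{l:intersection-limit-of-return-words}. Every element of $G_x$ lies in $J(\Cl X)$, which by Theorem~\ref{t:JX-in-minimal-case} is disjoint from $A^+$; hence $G_x\setminus A^+=G_x$. Since $\ov{\langle R_n(x)\rangle}_\infty=\ov{\langle R_n(x)\rangle}\setminus A^+$ by definition, and intersecting over $n$ commutes with deleting $A^+$, one gets
\[
  \bigcap_{n\ge1}\ov{\langle R_n(x)\rangle}_\infty
  =\Bigl(\bigcap_{n\ge1}\ov{\langle R_n(x)\rangle}\Bigr)\setminus A^+
  =G_x\setminus A^+=G_x,
\]
the middle equality being Lemma~\ref{l:intersection-limit-of-return-words}. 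I expect no obstacle here.

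The substance is the periodic case; let $N$ be the minimal period of $\Cl X$ and put $w=x_{[0,N-1]}$. The key computational step is to show that $R_n(x)=\{w\}$ for all sufficiently large $n$. Once the window $x_{[-n,n-1]}$ is long enough to have minimal period exactly $N$ (which I would justify by a Fine--Wilf argument: a factor of the $N$-periodic sequence $x$ of length $\ge 2N-1$ cannot acquire a strictly smaller period without $x$ itself having one), this window recurs in $x$ precisely at shifts that are multiples of $N$; reading the central labels of the loop of $\Sigma_{2n}(\Cl X)$ joining two consecutive recurrences then yields the single return word $w$. Consequently $\ov{\langle R_n(x)\rangle}=\ov{\langle w\rangle}$ for $n$ large, and, as the sets $\ov{\langle R_n(x)\rangle}_\infty$ form a decreasing sequence (because $R_m(x)\subseteq\langle R_n(x)\rangle$ for $m\ge n$) that is eventually constant equal to $\ov{\langle w\rangle}_\infty$, it suffices to prove $\ov{\langle w\rangle}_\infty=G_x$.

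For the inclusion $\ov{\langle w\rangle}_\infty\subseteq G_x$ I would argue that every infinite element $u$ of $\ov{\langle w\rangle}$ is a limit of powers of $w$; its finite factors are then factors of powers of $w=x_{[0,N-1]}$ and hence lie in $L(\Cl X)$, while its prefixes and suffixes are $x_{[0,k-1]}$ and $x_{[-k,-1]}$ by periodicity, so $\li u=x$. Theorem~\ref{t:JX-in-minimal-case} places $u$ in $J(\Cl X)$ and Lemma~\ref{l:parametrization-of-R-L-classes} then gives $u\in G_x$. The reverse inclusion $G_x\subseteq\ov{\langle w\rangle}$ --- which I expect to be the main obstacle --- I would obtain directly: given $g\in G_x$, pick a net of finite words of $L(\Cl X)$ converging to $g$, each being a factor $x_{[a,b]}$ of $x$. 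Since $\li g=x$, the element $g$ has prefix $w$ and suffix $x_{[-N,-1]}$, and as these conditions of fixed length are clopen, words close enough to $g$ must share them. The minimality of $N$ makes $w$ primitive, so a length-$N$ window determines the position modulo $N$, forcing $a\equiv0$ and $b\equiv N-1\pmod N$; hence such an approximant equals $x_{[0,b-a]}=w^{(b-a+1)/N}$, a power of $w$. Thus $g$ is a limit of powers of $w$ and lies in the closed set $\ov{\langle w\rangle}$. Combining the two inclusions gives $\ov{\langle w\rangle}_\infty=G_x$, completing the periodic case. The delicate points to get right are the identification of the return words (the Fine--Wilf threshold and the primitivity of $w$) and the continuity argument that approximants of $g$ inherit its length-$N$ prefix and suffix.
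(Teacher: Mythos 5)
Your proof is correct and takes essentially the route the paper intends: the paper states this lemma without proof, the non-periodic case being exactly your reduction to Lemma~\ref{l:intersection-limit-of-return-words} (since $G_x\subseteq J(\Cl X)$ is disjoint from $A^+$), and the periodic case being dismissed as something that ``can be easily seen''. Your computation that $R_n(x)=\{x_{[0,N-1]}\}$ for all $n\geq N$ (via Fine--Wilf and primitivity of $x_{[0,N-1]}$) and that $\ov{\langle x_{[0,N-1]}\rangle}_\infty=G_x$ is a correct and complete filling-in of that omitted verification.
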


  We shall consider the
 inverse systems
 with connecting morphisms the inclusions
 $i_{m,n}\colon\ov{\langle R_m(x)\rangle}\to \ov{\langle
   R_n(x)\rangle}$
 and
$i_{m,n}|\colon\ov{\langle R_m(x)\rangle}_\infty\to
\ov{\langle R_n(x)\rangle}_\infty$.
 Note that we can identify $G_x$ with
  $\varprojlim \ov{\langle R_n(x)\rangle}_\infty$ via
  Lemma~\ref{l:reformulation-intersection-limit-of-return-words}
  (each $g\in G_x$ is identified with the sequence $(g)_{n\ge 1}$).
  Also, one has $G_x\subseteq \varprojlim\ov{\langle R_n(x)\rangle}$, with
  equality in
  the non-periodic case, as seen
  in Lemma~\ref{l:intersection-limit-of-return-words}.

Let $m\geq n$, and let $r\in \ov{\langle R_m(x)\rangle}$.
Then, the equalities
  $$\hat\mu_n(\hat p_{m,n}\circ\hat\lambda_m(r))
  =\hat\mu_m(\hat\lambda_m(r))=r=\hat\mu_n(\hat\lambda_n(r))$$
  hold by~\eqref{eq:mu_n_lambda_n}.
  Since $\hat\mu_n$ is faithful, this shows that the
  following diagram commutes:  
  \begin{equation}\label{eq:definition-of-lambda}
    \begin{split}
          \xymatrix@C=4em@R=3em{
   \ov{\langle R_m(x)\rangle}
   \ar[d]_{\hat\lambda_m}
   \ar@{^{(}->}[r]^{i_{m,n}}
   &
   \ov{\langle R_n(x)\rangle}
   \ar[d]^{\hat\lambda_n}
   \\
   \hat\Sigma_{2m}(\Cl X,x)
   \ar[r]_{\hat p_{m,n}}
   &\hat\Sigma_{2n}(\Cl X,x).
    }
    \end{split}
  \end{equation}
  The commutativity of Diagram~\eqref{eq:definition-of-lambda}
  yields the existence of the homomorphism
  $\hat\lambda=\varprojlim \hat\lambda_n$ from
  $\varprojlim\ov{\langle  R_n(x)\rangle}$ to
  $\varprojlim \hat\Sigma_{2n}(\Cl X,x)$.
  Note that $\varprojlim \hat\Sigma_{2n}(\Cl X,x)$
  is the local semigroup $\hat\Sigma(\Cl X,x)$ of
  $\hat\Sigma(\Cl X)$ at vertex $x$
  (cf.~ Theorem~\ref{t:free-profinite-semigroupoids-in-minimal-case}).

   Let $\hat\Sigma_\infty(\Cl X)$ be
   the subgraph of $\hat\Sigma(\Cl X)\setminus \Sigma(\Cl X)^+$
   obtained by deleting the edges in $\Sigma(\Cl X)^+$.

   \begin{Rmk}\label{r:coincide-in-non-periodic-case}
    When $\Cl X$ is a
   minimal non-periodic subshift, the local semigroup
   $\hat\Sigma_\infty(\Cl X,x)$
   of $\hat\Sigma_\infty(\Cl X)$ at $x$ coincides with
   $\hat\Sigma(\Cl X,x)$.  
   \end{Rmk}
   
   It turns out that $\hat\Sigma_\infty(\Cl X,x)$
   is a profinite group whenever $\Cl X$ is minimal.
   Indeed, the following theorem was announced in~\cite{Almeida:2003b} and
   shown in~\cite[Theorem 6.7]{Almeida&ACosta:2007a}.

   \begin{Thm}\label{t:sigma-infty-is-a-connected-groupoid}
     Let $\Cl X$ be a minimal subshift.
     Then $\hat\Sigma_\infty(\Cl X)$ is a profinite connected groupoid.
   \end{Thm}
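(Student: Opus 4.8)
The plan is to treat $\hat\Sigma_\infty(\Cl X)$ as a compact subsemigroupoid of the free profinite semigroupoid $\hat\Sigma(\Cl X)$ and to analyze it through the faithful homomorphism $\hat\mu$, using that by Lemma~\ref{l:vertices-eti} an edge is infinite exactly when its label lies in $\Om AS\setminus A^+$, and that a \emph{loop} at $x$ is labelled by an element of the maximal subgroup $G_x$. First I would verify the basic structure. The finite paths $\Sigma(\Cl X)^+$ are topologically isolated in $\hat\Sigma(\Cl X)$, so they form an open set; hence the set of infinite edges is closed and, together with the closed vertex set $\Cl X$, gives a compact subgraph. Since $A^+$ is factorial, $\Om AS\setminus A^+$ is an ideal of $\Om AS$, so the product of two composable infinite edges is again infinite; thus $\hat\Sigma_\infty(\Cl X)$ is a closed subsemigroupoid of $\hat\Sigma(\Cl X)$, and therefore profinite. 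Once it is shown to be a groupoid, its being a \emph{profinite} groupoid is automatic, since closed semigroupoid congruences on a compact groupoid are groupoid congruences.

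Next I would establish strong connectivity. Fixing $x,y\in\Cl X$, minimality makes the forward orbit of $x$ dense, so there is a sequence $k_m\to\infty$ with $\sigma^{k_m}(x)\to y$. The length-$k_m$ path $c_m$ from $x$ to $\sigma^{k_m}(x)$ in $\Sigma(\Cl X)^+$ has $\hat\mu(c_m)=x_{[0,k_m-1]}$, and any accumulation point $c$ of $(c_m)$ is an edge from $x$ to $y$ whose label is an accumulation point of words of unbounded length, hence infinite (finite words being isolated). So between any two vertices there is an infinite edge; in particular each vertex carries an infinite loop.

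Then comes the core of the argument, the groupoid property. The restriction of $\hat\mu$ to the local semigroup $\hat\Sigma_\infty(\Cl X,x)$ of infinite loops at $x$ is injective (faithfulness on coterminal edges) and, by Lemma~\ref{l:vertices-eti}, lands inside $G_x$. Its image is then a nonempty closed subsemigroup of the compact group $G_x$, hence a closed subgroup, so $\hat\Sigma_\infty(\Cl X,x)$ is isomorphic to it and is a profinite group; its identity $1_x$ satisfies $\hat\mu(1_x)=e_x$, the identity of $G_x$. That each $1_x$ is a genuine local identity follows once more from faithfulness: for $t\colon x\to y$ one has $\ori{\hat\mu(t)}=\ori x=\ori{e_x}$, so $e_x$ and $\hat\mu(t)$ are $\Cl R$-equivalent by Lemma~\ref{l:parametrization-of-R-L-classes} and $e_x\,\hat\mu(t)=\hat\mu(t)$, giving $1_xt=t$; dually $t1_y=t$. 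Thus $\hat\Sigma_\infty(\Cl X)$ is a compact category. For invertibility, given $t\colon x\to y$ I would pick an infinite edge $r\colon y\to x$ from strong connectivity; then $tr$ and $rt$ are infinite loops lying in the local groups at $x$ and $y$, so $r(tr)^{-1}$ is a right inverse and $(rt)^{-1}r$ a left inverse of $t$, whence $t$ is invertible. Hence every edge is invertible and $\hat\Sigma_\infty(\Cl X)$ is a connected profinite groupoid.

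The step I expect to be most delicate is precisely the passage from the $\Cl J$-class $J(\Cl X)$ back to $\hat\Sigma_\infty(\Cl X)$. The homomorphism $\hat\mu$ is faithful but far from globally injective: distinct points of $\Cl X$ may share a forward (or a backward) ray, so many edges carry the same label in $J(\Cl X)$, and one cannot simply import the Sch\"utzenberger-groupoid structure of $J(\Cl X)$ along $\hat\mu$. The remedy is to argue strictly vertex-locally, where coterminality makes $\hat\mu$ injective and Lemma~\ref{l:vertices-eti} pins loops down inside the honest groups $G_x$, and to reach the non-loop edges only through connectivity; getting the local identities and the inverses to be recognized as such by the faithful but non-injective $\hat\mu$ is the real content of the proof.
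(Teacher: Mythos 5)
On this statement the paper offers no internal proof to compare against: it quotes the result from \cite[Theorem 6.7]{Almeida&ACosta:2007a}, adding only the two remarks needed to upgrade ``connected groupoid'' to ``profinite (topological) groupoid''. Your proposal therefore necessarily takes a different route: it reconstructs a self-contained proof from results quoted in the paper, and in its overall architecture it is correct. Moreover, it runs on exactly the machinery that the paper itself deploys later in the proof of Theorem~\ref{t:isomorphism} (faithfulness of $\hat\mu$, Lemma~\ref{l:vertices-eti}, Lemma~\ref{l:parametrization-of-R-L-classes}, density of $\Sigma(\Cl X)^+$ from Theorem~\ref{t:free-profinite-semigroupoids-in-minimal-case}): closedness of the set of infinite edges, strong connectivity from dense forward orbits, local semigroups embedded by $\hat\mu$ as closed subsemigroups of the compact groups $G_x$ (hence profinite groups), and then identities and inverses of arbitrary edges produced through connectivity and verified after applying the faithful $\hat\mu$; the two-sided inverse computation with $r(tr)^{-1}$ and $(rt)^{-1}r$ is sound. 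What this buys is independence from the external reference, plus a clear picture of exactly where minimality is used.

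Three steps need tightening. First, ``lands inside $G_x$ by Lemma~\ref{l:vertices-eti}'' is not a consequence of that lemma alone: it only pins down the rays $\ori{\hat\mu(s)}$ and $\ole{\hat\mu(s)}$, whereas membership in $G_x$ also requires $\hat\mu(s)\in J(\Cl X)$. For that you need that labels of infinite edges lie in $\Mir(\Cl X)\setminus A^+$ (via Lemma~\ref{l:labels-of-hatsigma2n}, or via density of finite paths) together with Theorem~\ref{t:JX-in-minimal-case}, which identifies $\Mir(\Cl X)\setminus A^+$ with $J(\Cl X)$ precisely in the minimal case; this is the essential use of minimality beyond connectivity (for merely irreducible subshifts the identification fails, and with it the whole groupoid property), and the same citation is needed for the $\Cl R$-equivalence computation $e_x\hat\mu(t)=\hat\mu(t)$ on non-loop edges. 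Second, your closedness argument leans on the isolation of finite paths; the paper does assert this in its preliminaries, but it cannot hold verbatim in the infinite-vertex setting: when $\Cl X$ is infinite and minimal, the edge set of $\Sigma(\Cl X)$ is a Cantor set, so length-one paths are not isolated in $\hat\Sigma(\Cl X)$. What is true, and all you need, is that the set of finite edges is \emph{open}: by Lemma~\ref{l:vertices-eti} it equals $\hat\mu^{-1}(A^+)$, and $A^+$ is open in $\Om AS$ because $A$ is finite. Third, the congruence fact you invoke for the profiniteness upgrade does not quite do the job, since semigroupoid congruences keep the (infinite) vertex set, so kernel quotients are never finite; the correct one-line argument is the one the paper gives right after the theorem, namely that the subsemigroupoid generated by the image of a groupoid under a homomorphism into a finite semigroupoid is itself a groupoid, which yields separation by finite groupoids. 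All three repairs are direct applications of results already in the paper, so your proof is complete once they are inserted.
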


   It should be noted that the notion of profiniteness for
   semigroupoids is being taken as compactness plus residual
   finiteness in the category of semigroupoids. If the semigroupoid
   turns out to be a groupoid, one may ask whether profiniteness in
   the category of groupoids is an equivalent property. The answer is
   affirmative since it is easy to verify that, if $\varphi:G\to S$ is
   a semigroupoid homomorphism and $G$ is a groupoid, then the
   subsemigroupoid of~$S$ generated by $\varphi(G)$ is a groupoid.

   In the statement of \cite[Theorem 6.7]{Almeida&ACosta:2007a}, it is
   only indicated that $\hat\Sigma_\infty(\Cl X)$ is a connected
   groupoid but we note that, if a compact semigroupoid is a groupoid,
   then edge inversion and the mapping associating to each vertex the
   identity at that vertex are continuous operations. Thus,
   $\hat\Sigma_\infty(\Cl X)$ is in fact a topological groupoid.
   
   A preliminary version of the next theorem was also announced
   in~\cite{Almeida:2003b}, and a proof appears in the doctoral
   thesis~\cite{Costa:2007}. We present here a different proof, based
   on Lemma~\ref{l:intersection-limit-of-return-words}.
  
  \begin{Thm}\label{t:isomorphism}
    For every minimal subshift $\Cl X$ and every $x\in\Cl X$,
    the restriction $\hat\lambda|\colon G_x
    \to\hat\Sigma_\infty(\Cl X,x)$ is an isomorphism.
    Its inverse is the restriction
    $\hat\mu|\colon\hat\Sigma_\infty(\Cl X,x)\to G_x$.
  \end{Thm}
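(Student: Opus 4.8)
The plan is to prove the statement first at each finite level $n$ and then pass to the inverse limit. The key assertion at level $n$ is that $\hat\lambda_n\colon\ov{\langle R_n(x)\rangle}\to\hat\Sigma_{2n}(\Cl X,x)$ is an isomorphism whose inverse is the restriction of $\hat\mu_n$. That $\hat\mu_n$ is a left inverse of $\hat\lambda_n$ is exactly~\eqref{eq:mu_n_lambda_n}, so $\hat\lambda_n$ is injective, and it remains to prove that it is onto. Since $\hat\lambda_n$ is continuous, since $\ov{\langle R_n(x)\rangle}=\overline{\langle R_n(x)\rangle}$ is compact and, by Theorem~\ref{t:codes-generate-free-profinite-groups}, topologically generated by $R_n(x)$, its image is the closed subsemigroup $\overline{\langle\lambda_n(R_n(x))\rangle}$ of $\hat\Sigma_{2n}(\Cl X,x)$.

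First I would observe that $\lambda_n(R_n(x))$ is precisely the set of first-return loops at the vertex $v=x_{[-n,n-1]}$: this is the standard correspondence between return words and first returns in the Rauzy graph, and it is exactly what the construction preceding~\eqref{eq:mu_n_lambda_n} encodes, the defining property of a return word $u$, namely that $x_{[-n,n-1]}$ occurs in the block $x_{[-n,-1]}\,u\,x_{[0,n-1]}$ only as a prefix and as a suffix, being precisely the condition that the associated loop does not pass through $v$ internally. As every closed path at $v$ factors uniquely as a product of first-return loops, this gives $\langle\lambda_n(R_n(x))\rangle=\Sigma_{2n}(\Cl X,x)^+$. Because the vertex set of $\Sigma_{2n}(\Cl X)$ is finite, the local semigroup $\hat\Sigma_{2n}(\Cl X,x)$ is clopen in $\hat\Sigma_{2n}(\Cl X)$, so the dense subset $\Sigma_{2n}(\Cl X)^+$ meets it in a dense subset, namely $\Sigma_{2n}(\Cl X,x)^+$. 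Hence $\overline{\langle\lambda_n(R_n(x))\rangle}=\overline{\Sigma_{2n}(\Cl X,x)^+}=\hat\Sigma_{2n}(\Cl X,x)$, so $\hat\lambda_n$ is a continuous bijection of compact spaces and therefore an isomorphism, inverted by the restriction of $\hat\mu_n$ by~\eqref{eq:mu_n_lambda_n}.

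Next I would track the finite and infinite parts. The isolated elements of $\ov{\langle R_n(x)\rangle}$ are those of $\langle R_n(x)\rangle=\ov{\langle R_n(x)\rangle}\cap A^+$, and $\hat\lambda_n$ carries them bijectively onto the discrete loops $\Sigma_{2n}(\Cl X,x)^+$; by bijectivity it thus restricts to an isomorphism $\ov{\langle R_n(x)\rangle}_\infty\to\hat\Sigma_{2n}(\Cl X,x)\setminus\Sigma_{2n}(\Cl X,x)^+$, still inverted by $\hat\mu_n$. In particular, inside the local semigroup, $\hat\mu_n$ takes a value in $A^+$ exactly on $\Sigma_{2n}(\Cl X,x)^+$. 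Combining this with Lemma~\ref{l:vertices-eti} (a loop $q$ at $x$ lies in $\Sigma(\Cl X)^+$ if and only if $\hat\mu(q)$ is finite) and the identity $\hat\mu=\hat\mu_n\circ\hat p_n$, a loop $q$ belongs to $\hat\Sigma_\infty(\Cl X,x)$ if and only if $\hat p_n(q)\in\hat\Sigma_{2n}(\Cl X,x)\setminus\Sigma_{2n}(\Cl X,x)^+$ for every $n$. Using Theorem~\ref{t:free-profinite-semigroupoids-in-minimal-case} to identify $\hat\Sigma(\Cl X,x)=\varprojlim\hat\Sigma_{2n}(\Cl X,x)$, this yields $\hat\Sigma_\infty(\Cl X,x)=\varprojlim\bigl(\hat\Sigma_{2n}(\Cl X,x)\setminus\Sigma_{2n}(\Cl X,x)^+\bigr)$, with connecting morphisms the restrictions of the $\hat p_{m,n}$.

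Finally I would pass to the limit. By Lemma~\ref{l:reformulation-intersection-limit-of-return-words} one identifies $G_x$ with $\varprojlim\ov{\langle R_n(x)\rangle}_\infty$, the connecting morphisms being the inclusions $i_{m,n}|$. The commutativity of Diagram~\eqref{eq:definition-of-lambda}, restricted to the infinite parts, shows that $(\hat\lambda_n|)_n$ is a morphism of inverse systems between the two towers just described, so its limit $\hat\lambda|=\varprojlim\hat\lambda_n|\colon G_x\to\hat\Sigma_\infty(\Cl X,x)$ is an isomorphism whose inverse is $\varprojlim\hat\mu_n|=\hat\mu|$. (In the non-periodic case one may bypass the passage to infinite parts, since then $G_x=\varprojlim\ov{\langle R_n(x)\rangle}$ by Lemma~\ref{l:intersection-limit-of-return-words} and $\hat\Sigma_\infty(\Cl X,x)=\hat\Sigma(\Cl X,x)$ by Remark~\ref{r:coincide-in-non-periodic-case}, so that $\hat\lambda=\varprojlim\hat\lambda_n$ restricts to the desired map directly.) The step I expect to demand the most care is the level-$n$ surjectivity of $\hat\lambda_n$, that is, the verification that the first-return loops generate the local semigroup densely; once that is in place, the rest is bookkeeping with the two inverse systems.
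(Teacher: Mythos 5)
Your reduction to the finite levels breaks down at exactly the step you flagged as delicate: the claim that $\lambda_n(R_n(x))$ is \emph{precisely} the set of first-return loops of $\Sigma_{2n}(\Cl X)$ at $v=x_{[-n,n-1]}$, hence that $\langle\lambda_n(R_n(x))\rangle=\Sigma_{2n}(\Cl X,x)^+$. Only the inclusion $\subseteq$ holds. A first-return loop at $v$ corresponds to a word all of whose factors of length at most $2n+1$ lie in $L(\Cl X)$ (Remark~\ref{r:words-recognized-by-sigma-n}), but that word need not itself belong to $L(\Cl X)$, whereas membership of $w$ in $R_n(x)$ requires $x_{[-n,-1]}wx_{[0,n-1]}\in L(\Cl X)$. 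Concretely, let $\Cl X$ be the Fibonacci subshift and $n=1$, with $x\in\Cl X$ chosen so that $x_{-1}x_0=aa$. Then $R_1(x)=\{aba,ababa\}$, while the first-return loops at the vertex $aa$ of $\Sigma_2(\Cl X)$ are the paths $aab\cdot aba\cdot(bab\cdot aba)^k\cdot baa$ for all $k\geq 0$: infinitely many of them, and for $k\geq 2$ their words contain $ababab\notin L(\Cl X)$. By uniqueness of the factorization of loops into first-return loops, such a loop does not lie in $\langle\lambda_1(R_1(x))\rangle$, and since finite paths are isolated in $\hat\Sigma_{2}(\Cl X)$ it does not lie in the closure either. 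So $\hat\lambda_n$ is injective but \emph{not} onto: your level-$n$ statement is false, not merely unproven. (The mismatch is systematic: by minimality $R_n(x)$ is finite, while the Rauzy graph, being a subshift-of-finite-type over-approximation of $\Cl X$, generally has infinitely many first-return loops at $v$.) The same example defeats the claimed level-$n$ isomorphism of infinite parts: $(abababa)^{\omega}$ is a label of an element of $\hat\Sigma_2(\Cl X,x)\setminus\Sigma_2(\Cl X,x)^+$, but it is not in $\ov{\langle R_1(x)\rangle}$, since $ababab$ is not a factor of any word of $\{aba,ababa\}^+$.

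This is precisely why the paper proves surjectivity only at the limit level, where minimality genuinely enters. By Theorem~\ref{t:free-profinite-semigroupoids-in-minimal-case}, every $s\in\hat\Sigma_\infty(\Cl X,x)$ is a limit of a net of finite paths of the \emph{limit} graph $\Sigma(\Cl X)$, and, unlike paths of the finite Rauzy graphs, finite paths of $\Sigma(\Cl X)$ have labels in $L(\Cl X)$; hence $\hat\mu(s)\in\ov{L(\Cl X)}\setminus A^+=J(\Cl X)$ by Theorem~\ref{t:JX-in-minimal-case}, and in fact $\hat\mu(s)\in G_x$ by Lemma~\ref{l:vertices-eti}. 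Then $\hat\mu(s)=\hat\mu(\hat\lambda(\hat\mu(s)))$ together with faithfulness of $\hat\mu$ gives $s=\hat\lambda(\hat\mu(s))$, which is the surjectivity of $\hat\lambda|_{G_x}$. Your final inverse-system bookkeeping is sound as far as it goes, but it cannot be fed with level-wise isomorphisms, because there are none; to rescue your tower strategy you would still need to show that every compatible family $(s_n)_n$ lies in the image, which amounts to the paper's limit argument rather than to any statement about a single Rauzy graph.
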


  \begin{proof}
    By Lemma~\ref{l:reformulation-intersection-limit-of-return-words},
    we know that $G_x=\bigcap_{n\ge 1} \ov{\langle  R_n(x)\rangle}_\infty$,
    and so from~\eqref{eq:mu_n_lambda_n}
    we deduce that $\hat\mu\circ\hat\lambda(g)=g$ for every $g\in G_x$.
    This shows in particular that $\hat\lambda(g)$  must be an infinite
    path whenever $g\in G_x$,
    whence $\hat\lambda(G_x)$ is indeed contained in
    $\hat\Sigma_\infty(\Cl X,x)$. It also shows
    that the restriction
     $\hat\mu|\colon\hat\lambda(G_x)\to G_x$ is onto. Such a restriction is also injective,
    as  $\hat\lambda(G_x)\subseteq \hat\Sigma(\Cl X,x)$
    and $\hat\mu$ is faithful.
    Therefore, all it remains to show is
    the equality $\hat\lambda(G_x)=\hat\Sigma_\infty(\Cl X,x)$.
    
    Let $s\in\hat\Sigma_\infty(\Cl X,x)$
    and let $g=\hat\mu(s)$.
    By Theorem~\ref{t:free-profinite-semigroupoids-in-minimal-case},
    $s$ is the limit of a net of finite paths of
    the graph $\Sigma(\Cl X)$. Since the labeling $\hat\mu$
    of finite paths clearly belongs to $L(\Cl X)$, we 
    have $g=\hat\mu(s)\in\overline{L(\Cl X)}\setminus A^+$ by
    continuity of~$\hat\mu$. It follows that $\hat\mu(s)\in J(\Cl X)$
    by~Theorem~\ref{t:JX-in-minimal-case}.
    Since $s$ is a loop rooted at~$x$,
    applying Lemma~\ref{l:vertices-eti},
    we conclude that $\hat\mu(s)\in G_x$.
    Hence, we have $\hat\mu(s)=g=\hat\mu(\hat \lambda (g))$.
  As $\hat\mu$ is faithful, we get
  $s=\hat\lambda (g)$, concluding the proof.
  \end{proof}

The notion of isomorphism between subshifts is called \emph{conjugacy}.
If $\Cl X$ and~$\Cl Y$ are conjugate subshifts, then
$\Sigma(\Cl X)$ and $\Sigma(\Cl Y)$ are isomorphic, which combined
Theorem~\ref{t:isomorphism} leads to the following result.

\begin{Cor}\label{c:invariance}
  If $\Cl X$ and $\Cl Y$ are conjugate minimal subshifts, then
  the profinite groups $G(\Cl X)$ and $G(\Cl Y)$ are isomorphic.\qed
\end{Cor}

Actually, a more general result was proved in~\cite{Costa:2006}
using different techniques:
if $\Cl X$ and $\Cl Y$ are conjugate irreducible subshifts, then
the profinite groups $G(\Cl X)$
  and $G(\Cl Y)$ are isomorphic.

\section{An application: a sufficient condition for freeness}

In this section, we establish the next theorem, where $FG(A)$ denotes the
free group generated by $A$.

\begin{Thm}\label{t:return-words-form-basis}
  Let $\Cl X$ be a minimal non-periodic subshift, and take $x\in \Cl
  X$. Let $A$ be the set of letters occurring in $\Cl X$. Suppose
  there is a subgroup $K$ of $FG(A)$ and an infinite set $P$ of
  positive integers such that, for every $n\in P$, the set $R_n(x)$ is
  a free basis of $K$. Let $\overline{K}$ be the topological closure
  of~$K$ in $\Om AG$. Then the restriction to $G_x$ of the canonical
  projection $p_{\pv G}\colon \Om AS\to\Om AG$ is a continuous
  isomorphism from $G_x$ onto $\overline{K}$.
\end{Thm}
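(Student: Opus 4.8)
Fix $n\in P$. By Theorem~\ref{t:codes-generate-free-profinite-groups} the closed subsemigroup $\ov{\langle R_n(x)\rangle}$ is the free profinite semigroup $\Om{R_n(x)}S$ on the finite code $R_n(x)$. Since $\Om AG$ is a profinite group, the restriction of $p_{\pv G}$ to $\ov{\langle R_n(x)\rangle}$ factors through the maximal group quotient: it equals $\rho_n\circ c_n$, where $c_n\colon\Om{R_n(x)}S\to\Om{R_n(x)}G$ is the canonical projection to the free profinite group on $R_n(x)$ and $\rho_n\colon\Om{R_n(x)}G\to\Om AG$ is induced by sending each return word to its image in $FG(A)\subseteq\Om AG$, so that $\Im{\rho_n}=\overline K$. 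The hypothesis that $R_n(x)$ is a basis of the finitely generated subgroup $K\le FG(A)$ lets us invoke Marshall Hall's theorem: a finitely generated subgroup of a free group is closed in the profinite topology and inherits its own full profinite topology, so the inclusion $K\hookrightarrow FG(A)$ induces an embedding of profinite completions. Consequently $\rho_n$ is an isomorphism of $\Om{R_n(x)}G$ onto $\overline K$. As $G_x\subseteq\ov{\langle R_n(x)\rangle}$, we obtain $p_{\pv G}(G_x)\subseteq\overline K$ and $p_{\pv G}|_{G_x}=\rho_n\circ(c_n|_{G_x})$, whence it suffices to prove that $c_n|_{G_x}\colon G_x\to\Om{R_n(x)}G$ is an isomorphism.

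\textbf{Surjectivity.} This part is soft and uses all of $P$. By Lemma~\ref{l:intersection-limit-of-return-words}, $G_x=\bigcap_{m\ge1}\ov{\langle R_m(x)\rangle}$, and since $R_{m+1}(x)\subseteq\langle R_m(x)\rangle$ the sets $C_m:=\ov{\langle R_m(x)\rangle}$ form a decreasing chain of compacta. For a continuous map between compact Hausdorff spaces one has $p_{\pv G}(\bigcap_m C_m)=\bigcap_m p_{\pv G}(C_m)$: if $y\in\bigcap_m p_{\pv G}(C_m)$, the sets $p_{\pv G}^{-1}(y)\cap C_m$ are nonempty, nested and compact, so their intersection $p_{\pv G}^{-1}(y)\cap G_x$ is nonempty. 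Moreover $p_{\pv G}(C_m)$ is the closed subsemigroup of $\Om AG$ generated by $R_m(x)$, which, in a profinite group, coincides with the closed subgroup $\overline{\langle R_m(x)\rangle}$; these decrease and equal $\overline K$ for the cofinally many $m\in P$, so $\bigcap_m p_{\pv G}(C_m)=\overline K$. Hence $p_{\pv G}(G_x)=\overline K$, and by the reduction this says exactly that $c_n|_{G_x}$ is onto $\Om{R_n(x)}G$.

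\textbf{Injectivity (the main obstacle).} Combining the two previous steps, $c_n|_{G_x}$ is a continuous epimorphism of the profinite group $G_x$ onto the free profinite group $\Om{R_n(x)}G$ of finite rank $d:=|R_n(x)|$; recall also that $G_x$, being a closed subgroup of $\Om AS$, is projective by the Rhodes--Steinberg description~\cite{Rhodes&Steinberg:2008}. The plan is to upgrade this epimorphism to an isomorphism through the generation bound $d(G_x)\le d$. Indeed, if $G_x$ is topologically generated by $d$ elements then any continuous epimorphism $\phi\colon G_x\to\Om{R_n(x)}G$ is injective: for every finite group $Q$, precomposition with $\phi$ injects $\mathrm{Epi}(\Om{R_n(x)}G,Q)$ into $\mathrm{Epi}(G_x,Q)$, while $|\mathrm{Epi}(G_x,Q)|\le\#\{\text{generating }d\text{-tuples of }Q\}=|\mathrm{Epi}(\Om{R_n(x)}G,Q)|$; the resulting bijection forces $\KerS\phi$ into every open normal subgroup, hence $\KerS\phi=1$.

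Thus the crux is the inequality $d(G_x)\le d$ (equivalently, that $c_n$ embeds $G_x$ into $\Om{R_n(x)}G$). I expect this to be the genuinely hard point, since a priori a maximal subgroup of $\ov{\langle R_n(x)\rangle}$ can have much larger rank. The approach I would pursue exploits the hypothesis once more: since $\rho_n$ is an isomorphism and $\rho_n$ factors the central labeling through the fundamental groupoid, the return loops $\lambda_n(R_n(x))$ must form a \emph{basis} of $\pi_1(\Sigma_{2n}(\Cl X),x)$, so that $\hat\Pi_{2n}(\Cl X,x)\cong\Om{R_n(x)}G$ has rank $d$ for all $n\in P$. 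Feeding this into the inverse presentation of $G_x$ over the return-word alphabets (Lemma~\ref{l:reformulation-intersection-limit-of-return-words}), whose connecting maps are onto, a compactness argument on the spaces of generating $d$-tuples should produce a compatible generating $d$-tuple of $G_x$, yielding $d(G_x)\le d$ and completing the proof that $p_{\pv G}|_{G_x}$ is an isomorphism onto $\overline K$ with inverse the corestriction of $\hat\mu$-type data.
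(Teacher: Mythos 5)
Your overall architecture is the same as the paper's, and most of it is sound. The surjectivity step is essentially the paper's own argument: it too combines Lemma~\ref{l:intersection-limit-of-return-words} with a nested-compact-sets argument (its Lemma~\ref{l:intersection-of-onto}) to get $p_{\pv G}(G_x)=\overline K$. Your Marshall Hall step, identifying $\overline K$ as a free profinite group with basis $R_n(x)$, is exactly the paper's Proposition~\ref{p:coulbois-et-al} (Coulbois--Sapir--Weil). And your counting argument, showing that a continuous epimorphism from a profinite group generated by $d$ elements onto a free profinite group of rank $d$ is injective, is correct; the paper runs the equivalent Hopfian argument (an onto continuous endomorphism of a finitely generated profinite group is an isomorphism, \cite[Proposition 2.5.2]{Ribes&Zalesskii:2000}).

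The genuine gap is exactly where you place it, and you do not close it: the bound $d(G_x)\le d$. The paper supplies this as Proposition~\ref{p:number-of-generators}, quoted from \cite{Almeida&ACosta:2013}: since $|R_n(x)|=M$ for all $n\in P$, \emph{any} accumulation point in $(\Om AS)^M$ of the sequence of return-word tuples generates $G_x$ as a closed subsemigroup. This is a substantive result, not a routine compactness observation, and your sketch for replacing it fails as written. Lemma~\ref{l:reformulation-intersection-limit-of-return-words} presents $G_x$ as $\bigcap_{n}\ov{\langle R_n(x)\rangle}_\infty$, an inverse limit whose connecting maps $i_{m,n}$ are \emph{inclusions}, not surjections; the standard compactness argument on spaces of generating $d$-tuples requires surjective connecting maps, and a decreasing intersection of $d$-generated closed subsemigroups need not be $d$-generated --- nothing forces an accumulation point of the successive generating tuples to generate the whole intersection (that is precisely what Proposition~\ref{p:number-of-generators} asserts and what must be proved). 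A surjective inverse system computing $G_x$ does exist, namely $G_x\cong\varprojlim\hat\Pi_{2n}(\Cl X,x)$, and your observation that $\hat\Pi_{2n}(\Cl X,x)\cong\Om{R_n(x)}G$ for $n\in P$ can indeed be justified (first-return loops generate the fundamental group of a strongly connected graph, and Hopficity of finitely generated free groups then makes them a basis when their labels form one --- your appeal to ``$\rho_n$ is an isomorphism'' does not by itself give this). But the isomorphism $G_x\cong\varprojlim\hat\Pi_{2n}(\Cl X,x)$ is the paper's main theorem (Corollary~\ref{c:the-geometric-interpretation-reformulation}), proved in the following section with far heavier machinery; it is not available to you here, and while using it would not be circular (Section 7 of the paper does not depend on Theorem~\ref{t:return-words-form-basis}), your proposal neither proves it nor cites anything that does. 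As it stands, the injectivity half of your proof rests on an unproved claim.
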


The following proposition, taken
from~\cite[Proposition 5.2]{Almeida&ACosta:2013},
plays a key role in the proof of Theorem~\ref{t:return-words-form-basis}.

\begin{Prop}\label{p:number-of-generators}
  Let $\Cl X$ be a minimal non-periodic subshift of \z A and let
  $x\in\Cl X$. Suppose there are $M\geq 1$
  and strictly increasing sequences
  $(p_n)_n$
  and
  $(q_n)_n$
  of
  positive integers 
  such that $R(x_{[-p_n,-1]},x_{[0,q_n]})$ has exactly $M$ elements
  $r_{n,1},\ldots,r_{n,M}$, for every~$n$. Let $(r_{1},\ldots,r_{M})$
  be an arbitrary accumulation point of the sequence
  $(r_{n,1},\ldots,r_{n,M})_{n}$ in $(\Om AS)^M$. Then
  $\overline{\langle r_{1},\ldots,r_{M}\rangle}$ is the maximal
  subgroup $G_x$ of $J(\Cl X)$.
\end{Prop}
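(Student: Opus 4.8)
The plan is to prove the two inclusions $\overline{\langle r_1,\dots,r_M\rangle}\subseteq G_x$ and $G_x\subseteq\overline{\langle r_1,\dots,r_M\rangle}$ separately. Throughout I write $S_n=R(x_{[-p_n,-1]},x_{[0,q_n]})=\{r_{n,1},\dots,r_{n,M}\}$, $C_n=\overline{\langle S_n\rangle}$ and $H=\overline{\langle r_1,\dots,r_M\rangle}$, and I fix a subnet along which $(r_{n,1},\dots,r_{n,M})\to(r_1,\dots,r_M)$ in $(\Om AS)^M$. First I would show that each $r_i\in G_x$, which gives $H\subseteq G_x$ since $G_x$ is a closed subgroup of $\Om AS$. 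As $S_n\subseteq\langle R(x_{[0,q_n]})\rangle$ (the asymmetric analogue of the inclusion already invoked for $R_n(x)$), the minimal length of the elements of $S_n$ is bounded below by $\min\{|r|:r\in R(x_{[0,q_n]})\}$, which tends to $\infty$ by Lemma~\ref{l:length-of-return-words-tends-to-infinity}; hence every $r_i$ lies in $\Om AS\setminus A^+$. Since each $r_{n,i}$ is a factor of a word of $L(\Cl X)$, passing to the limit gives $r_i\in\overline{L(\Cl X)}\setminus A^+=J(\Cl X)$ by Theorem~\ref{t:JX-in-minimal-case}. Finally, for $|r_{n,i}|$ large the defining relation $x_{[-p_n,-1]}r_{n,i}\in R(x_{[-p_n,q_n]})\,x_{[-p_n,-1]}$ forces $r_{n,i}$ to begin with $x_{[0,q_n]}$ and to end with $x_{[-p_n,-1]}$; letting $n$ grow along the subnet yields $\ori{r_i}=(x_j)_{j\ge0}$ and $\ole{r_i}=(x_j)_{j<0}$, that is, $\li{r_i}=x$, so Lemma~\ref{l:parametrization-of-R-L-classes} places $r_i$ in $G_x$.

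Next I would establish the uniform containment $G_x\subseteq C_n$ for every $n$. The combinatorial input here is that return words to a longer centred word decompose into return words to a shorter one: whenever $k\ge\max\{p_n,q_n+1\}$ one has $R_k(x)\subseteq\langle S_n\rangle$, because every occurrence of the centred pattern $x_{[-k,k-1]}$ is in particular an occurrence of $x_{[-p_n,q_n]}$, so the gaps between consecutive occurrences of the former concatenate gaps of the latter. Consequently $\overline{\langle R_k(x)\rangle}\subseteq C_n$, and since $G_x\subseteq\overline{\langle R_k(x)\rangle}$ by Lemma~\ref{l:intersection-limit-of-return-words}, we obtain $G_x\subseteq C_n$.

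The reverse inclusion $G_x\subseteq H$ is the heart of the argument and is where the hypothesis that the number of return words stays equal to $M$ is used. Fix $g\in G_x$; since $\Om AS$ is profinite and $H$ is closed, it suffices to prove $\rho(g)\in\rho(H)$ for every continuous homomorphism $\rho$ onto a finite semigroup $T$. Because $T$ is finite and $(r_{n,1},\dots,r_{n,M})\to(r_1,\dots,r_M)$ along the subnet, there is an index $n$ (as large as we wish) with $\rho(r_{n,i})=\rho(r_i)$ simultaneously for all $i=1,\dots,M$. For such an $n$, compactness gives $\rho(C_n)=\langle\rho(r_{n,1}),\dots,\rho(r_{n,M})\rangle=\langle\rho(r_1),\dots,\rho(r_M)\rangle=\rho(H)$, while $g\in G_x\subseteq C_n$ yields $\rho(g)\in\rho(C_n)=\rho(H)$. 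As $\rho$ is arbitrary, $g\in H$, and together with the first inclusion this gives $H=G_x$.

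I expect two points to require the most care. The first is the asymmetric decomposition $R_k(x)\subseteq\langle S_n\rangle$: it must be checked at the level of the $u_1^{-1}$-normalised return sets $R(u_1,u_2)$ rather than only for the raw return words $R(u)$, and it is what underlies $G_x\subseteq C_n$. The second, and the conceptual crux, is the device in the third paragraph of replacing $\rho(r_i)$ by $\rho(r_{n,i})$ for a single large $n$ and then invoking $g\in C_n$; this is precisely what converts the mere accumulation of the tuples $(r_{n,1},\dots,r_{n,M})$ into genuine topological generation of $G_x$, and it is the bounded cardinality $M$ of the approximating generating sets that makes it work.
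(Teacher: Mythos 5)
The paper itself contains no proof of this proposition: it is imported verbatim from \cite{Almeida&ACosta:2013} (Proposition 5.2), so your argument has to be judged on its own merits rather than against an internal proof. On those merits it is essentially correct, and its three ingredients are exactly the ones the paper assembles around the statement: membership of the limit points in $G_x$ via Lemma~\ref{l:parametrization-of-R-L-classes} and the footnote inclusion $R(z,t)\subseteq\langle R(t)\rangle$ feeding Lemma~\ref{l:length-of-return-words-tends-to-infinity}; the containment $G_x\subseteq\overline{\langle R(x_{[-p_n,-1]},x_{[0,q_n]})\rangle}$ obtained from Lemma~\ref{l:intersection-limit-of-return-words} together with your decomposition $R_k(x)\subseteq\langle S_n\rangle$ for $k\geq\max\{p_n,q_n+1\}$ (which you rightly insisted must be checked for the $u_1^{-1}$-normalised sets, and which does check out: in $x_{[-k,-1]}vx_{[0,k-1]}=w\,x_{[-k,k-1]}$ the gaps between consecutive occurrences of $x_{[-p_n,q_n]}$ lying between the two end occurrences belong to $S_n$ and multiply out to $v$); and the finite-quotient transfer, which is sound and is indeed precisely where the constancy of the cardinality $M$ is consumed, since separation of a point from the closed set $H$ by continuous homomorphisms onto finite semigroups is available in $\Om AS$.

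The one step you should repair is in your first paragraph: you claim that, for $|r_{n,i}|$ large, the defining relations force $r_{n,i}$ to begin with all of $x_{[0,q_n]}$ and to end with all of $x_{[-p_n,-1]}$. That requires $|r_{n,i}|\geq q_n+1$ and $|r_{n,i}|\geq p_n$, and nothing in the hypotheses controls the growth of $\min\{|r|\colon r\in S_n\}$ relative to $p_n$ and $q_n$: both tend to infinity, but the minimal return length may do so much more slowly, so the asserted containments can fail for every $n$. The fix is immediate and costs one line: writing $u_1=x_{[-p_n,-1]}$, $u_2=x_{[0,q_n]}$ and $u_1v=wu_1$ with $w\in R(u_1u_2)$, the word $u_1u_2$ is a prefix of $wu_1u_2=u_1vu_2$, so $u_2$ is a prefix of $vu_2$ and hence $v$ agrees with $x_{[0,q_n]}$ on its first $\min\{|v|,q_n+1\}$ letters; dually, $u_1$ is a suffix of $u_1v$, so $v$ agrees with $x_{[-p_n,-1]}$ on its last $\min\{|v|,p_n\}$ letters. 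Since $\min\{|v|\colon v\in S_n\}\to\infty$ and $p_n,q_n\to\infty$, for each fixed $k$ the length-$k$ prefix and suffix of $r_{n,i}$ are eventually $x_{[0,k-1]}$ and $x_{[-k,-1]}$, and as these are clopen conditions they pass to the limit, giving $\li{r_i}=x$; combined with $r_i\in\overline{L(\Cl X)}\setminus A^+=J(\Cl X)$ from Theorem~\ref{t:JX-in-minimal-case}, Lemma~\ref{l:parametrization-of-R-L-classes} then places $r_i$ in $G_x$ exactly as you intended. With this emendation the proof is complete.
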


In the proof of Theorem~\ref{t:tree-are-free}
we shall apply the following lemma, whose proof is an easy and elementary
exercise that we omit.

\begin{Lemma}\label{l:intersection-of-onto}
  Let $S_1\supseteq S_2\supseteq S_3\supseteq \cdots $
  be a descending sequence of compact
  subspaces of a compact space $S_1$.
  Suppose that $\varphi\colon S_1\to T$ is a continuous
  mapping  such that $\varphi(S_n)=T$ for every
  $n\geq 1$.
  If $I=\bigcap_{n\geq 1} S_n$, then we have $\varphi(I)=T$.
\end{Lemma}

We shall also use the following tool.

\begin{Prop}[{\cite[Corollary~2.2]{Coulbois&Sapir&Weil:2003}}]
  \label{p:coulbois-et-al}
  Suppose that $B$ is the basis of a finitely generated subgroup
  $K$ of $FG(A)$. Let $\overline{K}$ be the topological closure of $K$ in
  $\Om AG$. Then $\overline{K}$ is a free profinite group with basis $B$.
\end{Prop}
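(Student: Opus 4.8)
The plan is to realise $\overline K$ as the image of the free profinite group $\Om BG$ under a canonical continuous homomorphism and then to prove that this homomorphism is injective. Since $K$ is finitely generated and free with basis $B$, the set $B$ is finite and $K\cong FG(B)$, so the profinite completion $\widehat K$ of $K$ is canonically the free profinite group $\Om BG$. The inclusion $K\hookrightarrow\Om AG$ extends, by the universal property of the profinite completion, to a continuous homomorphism $\iota\colon\Om BG\to\Om AG$. Its image is compact, hence closed, and contains $K$, so it equals the closed subgroup generated by $K$, namely $\overline K$; moreover $\iota$ carries the basis of $\Om BG$ to the subset $B\subseteq\overline K$. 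As a bijective continuous homomorphism of profinite groups is an isomorphism, it suffices to prove that $\iota$ is injective: this will simultaneously show that $\overline K$ is free profinite and that $B$ is a basis of $\overline K$.

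First I would reduce injectivity of $\iota$ to a separability statement about $K$ inside $FG(A)$. Writing $\Om BG=\varprojlim K/N$ over the finite-index normal subgroups $N\trianglelefteq K$, and using that the open subgroups of $\Om AG$ are precisely the closures of the finite-index subgroups of $FG(A)$, a routine argument with dense subgroups and open-closed subgroups shows that $\iota$ is injective if and only if, for every finite-index normal subgroup $N$ of $K$, there is a finite-index subgroup $M$ of $FG(A)$ such that $M\cap K\subseteq N$. Equivalently, the profinite topology of $FG(A)$ must induce on $K$ its own full profinite topology. (Demanding $M$ normal makes no difference, since one may always pass to the normal core.)

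The key step is to establish this separability condition, and here I would invoke Marshall Hall's theorem: a finitely generated subgroup $K$ of a free group $FG(A)$ is a free factor of some finite-index subgroup $H$ of $FG(A)$, say $H=K\ast L$. The free-product decomposition yields a retraction $\rho\colon H\to K$ with $\rho|_K=\mathrm{id}_K$. Given a finite-index normal subgroup $N\trianglelefteq K$, the subgroup $M=\rho^{-1}(N)$ has finite index in $H$, hence in $FG(A)$, and since $\rho$ fixes $K$ pointwise one computes $M\cap K=N$, so in particular $M\cap K\subseteq N$. This produces the required $M$ and verifies the separability condition.

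The main obstacle is precisely this separability step; everything else is formal bookkeeping with profinite completions and universal properties. Once injectivity of $\iota$ is in hand, $\iota\colon\Om BG\to\overline K$ is a continuous isomorphism, whence $\overline K$ is a free profinite group and the image $B\subseteq\Om AG$ of the basis of $\Om BG$ is a basis of $\overline K$, as asserted.
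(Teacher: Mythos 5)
Your proposal is correct, but it cannot be compared with an internal argument: the paper offers no proof of this proposition at all, quoting it directly as Corollary~2.2 of Coulbois, Sapir and Weil. What you have done is reconstruct, essentially, the classical proof underlying that cited result. Your two-step scheme is sound: the reduction of injectivity of $\iota\colon\Om BG\to\Om AG$ to the statement that the profinite topology of $FG(A)$ induces on $K$ its full profinite topology is the standard dense-subgroup argument (the one point worth making explicit is that an open subgroup of $\Om BG$ is the closure of its intersection with the dense subgroup $K$, so $M\cap K\subseteq N$ forces $\iota^{-1}(\overline M)\subseteq\overline N$, and intersecting over all $N$ kills $\KerS\iota$); and the separability step via Marshall Hall's theorem is exactly the known proof of that induced-topology property: with $H=K\ast L$ of finite index in $FG(A)$ and $\rho\colon H\to K$ the retraction, $M=\rho^{-1}(N)$ has index $[K:N]$ in $H$, hence finite index in $FG(A)$, and $M\cap K=N$. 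The remaining bookkeeping (compact image equals $\overline K$, continuous bijection between profinite groups is an isomorphism, the basis of $\Om BG$ maps onto $B$) is all as you say; note that finiteness of $B$ follows from $K$ being finitely generated and free on $B$, which is what licenses $\widehat K\cong\Om BG$ and the inverse-limit description $\varprojlim K/N$. In short: where the authors outsource the result, you give a self-contained and correct proof, at the cost of invoking Hall's theorem as a black box instead of the Coulbois--Sapir--Weil reference.
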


We are ready to prove Theorem~\ref{t:return-words-form-basis}.

\begin{proof}[Proof of Theorem~\ref{t:return-words-form-basis}]
  By Lemma~\ref{l:intersection-limit-of-return-words},
  we have $G_x=\bigcap_{n\in P}\ov{\langle R_n(x)\rangle}$.
  On the other hand, for every $n\in P$, since by hypothesis
  the set $p_{\pv G}(R_n(x))=R_n(x)$ is a basis of $K$,
  we have $p_{\pv G}\bigl(\,\ov{\langle R_n(x)\rangle}\,\bigr)=\overline{K}$.
  It then follows from
  Lemma~\ref{l:intersection-of-onto}
  that $p_{\pv G}(G_x)=\overline{K}$.

  By assumption, for every $n\in P$, the set $R_n(x)$ has $M$ elements,
  where $M$ is the rank of $K$. Therefore, by
  Proposition~\ref{p:number-of-generators},
  we know that $G_x$ is generated by $M$ elements.
  On the other hand, $\overline{K}$ is a free profinite group of rank~$M$,
  by Proposition~\ref{p:coulbois-et-al}.
  Hence, there is a continuous onto homomorphism
  $\psi\colon \overline{K}\to G_x$. We may then consider the
  continuous onto endomorphism $\varphi$ of $\overline{K}$ such that
  $\varphi(g)=p_{\pv G}(\psi(g))$ for every $g\in \overline{K}$.
  Every onto continuous endomorphism of a finitely generated profinite
  group is an isomorphism~\cite[Proposition 2.5.2]{Ribes&Zalesskii:2000},
  whence $\varphi$ is an isomorphism.
  Since $\psi$ is onto, we conclude that $\psi$ is an isomorphism.
  This shows that the restriction $p_G|\colon G_x\to \overline{K}$
  is the continuous isomorphism $\varphi\circ\psi^{-1}\colon G_x\to \overline{K}$.
\end{proof}

We proceed to apply Theorem~\ref{t:return-words-form-basis}
and two of the main results
of~\cite{Berthe&Felice&Dolce&Leroy&Perrin&Reutenauer&Rindone:2015}
to deduce the freeness of the Sch\"utzenberger group
of the minimal subshifts satisfying the \emph{tree condition}, which we next
describe.

Let $\Cl X$ be a subshift of $\z A$.
Given $w\in L(\Cl X)\cup\{1\}\subseteq A^\ast$, let
\begin{align*}
  L_w&=\{a\in A\mid aw\in L(\Cl X)\},\\
  R_w&=\{a\in A\mid wa\in L(\Cl X)\},\\
  E_w&=\{(a,b)\in A\times A\mid awb\in L(\Cl X)\}.\\
\end{align*}
The \emph{extension graph $G_w$} is the bipartite undirected graph whose
vertex set is the union of disjoint copies of $L_w$ and $R_w$,
and whose edges are the pairs $(a,b)\in E_w$, with incidence in
$a\in L_w$ and $b\in R_w$. The subshift $\Cl X$ satisfies the tree
condition if $G_w$ is a tree for every
$w\in L(\Cl X)\cup\{1\}$.

The class of subshifts satisfying the tree condition contains two
classes that have deserved strong attention in the literature: the
class of \emph{Arnoux-Rauzy subshifts\footnote{The Arnoux-Rauzy
    subshifts over two-letter alphabets are the extensively studied
    \emph{Sturmian subshifts}, but we warn that in
    \cite{Berthe&Felice&Dolce&Leroy&Perrin&Reutenauer&Rindone:2015}
    the Arnoux-Rauzy subshifts are called Sturmian.}} (see the
survey~\cite{Glen&Justin:2009}), and the class of subshifts defined by
\emph{regular interval exchange transformations}~(see
\cite{Berthe&Felice&Dolce&Leroy&Perrin&Reutenauer&Rindone:2015,Berthe&Felice&Dolce&Leroy&Perrin&Reutenauer&Rindone:2015b}).

It is shown in~\cite[Theorem
4.5]{Berthe&Felice&Dolce&Leroy&Perrin&Reutenauer&Rindone:2015} that if
the minimal subshift $\Cl X$ satisfies the tree condition, then, for
every $w\in L(\Cl X)$, the set of return words $R(w)$ is a basis of
the free group generated by the set of letters occurring in $\Cl X$.
This result is called the Return Theorem
in~\cite{Berthe&Felice&Dolce&Leroy&Perrin&Reutenauer&Rindone:2015}.
Combining the Return Theorem with
Theorem~\ref{t:return-words-form-basis}, noting that, for every
$x\in\Cl X$, the set $R_n(x)$ is conjugate to $R_n(x_{[-n,n-1]})$, we
immediately deduce the following theorem.

\begin{Thm}\label{t:tree-are-free}
  If $\Cl X$ is a minimal subshift satisfying the tree condition,
  then $G(\Cl X)$ is a free profinite group with rank $M$, where
  $M$ is the number of letters occurring in $\Cl X$.\qed
\end{Thm}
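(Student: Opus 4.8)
The plan is to derive Theorem~\ref{t:tree-are-free} as an immediate corollary of the Return Theorem \cite[Theorem 4.5]{Berthe&Felice&Dolce&Leroy&Perrin&Reutenauer&Rindone:2015} together with Theorem~\ref{t:return-words-form-basis}. The strategy is to verify that the hypotheses of Theorem~\ref{t:return-words-form-basis} are met for the subshift $\Cl X$ and then read off the conclusion.

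First I would fix an arbitrary $x\in\Cl X$ and set $A$ to be the set of letters occurring in $\Cl X$. The Return Theorem tells us that, under the tree condition, for \emph{every} word $w\in L(\Cl X)$ the set of return words $R(w)$ is a free basis of $FG(A)$, the free group on the $M=|A|$ letters. The only gap to bridge is that Theorem~\ref{t:return-words-form-basis} is phrased in terms of the delayed return sets $R_n(x)=R(x_{[-n,-1]},x_{[0,n-1]})$ rather than the plain return sets $R(w)$. The key observation, already recorded in the text preceding the statement, is that $R_n(x)$ is conjugate to $R(x_{[-n,n-1]})$: indeed $R(u_1,u_2)=u_1^{-1}(R(u)u_1)$ by the definition in Section~4, so taking $u=x_{[-n,n-1]}$, $u_1=x_{[-n,-1]}$, $u_2=x_{[0,n-1]}$ exhibits $R_n(x)$ as the conjugate $x_{[-n,-1]}^{-1}\,R(x_{[-n,n-1]})\,x_{[-n,-1]}$ inside $FG(A)$.

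Next I would use the fact that conjugation is an automorphism of $FG(A)$: since $R(x_{[-n,n-1]})$ is a free basis of the whole group $FG(A)$ by the Return Theorem, its conjugate $R_n(x)$ is again a free basis of $FG(A)$. Thus the hypothesis of Theorem~\ref{t:return-words-form-basis} holds with $K=FG(A)$ and with $P$ taken to be the set of all positive integers $n$ for which $x_{[-n,n-1]}\in L(\Cl X)$, which is cofinite and in particular infinite. Applying Theorem~\ref{t:return-words-form-basis} then gives that the canonical projection $p_{\pv G}\colon\Om AS\to\Om AG$ restricts to a continuous isomorphism $G_x\to\overline{K}=\overline{FG(A)}=\Om AG$. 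Since $\Om AG$ is the free profinite group of rank $M$, and since $G(\Cl X)$ is by definition the abstract profinite group isomorphic to the maximal subgroup $G_x$ of $J(\Cl X)$ (Lemma~\ref{l:parametrization-of-R-L-classes}), we conclude that $G(\Cl X)$ is free profinite of rank $M$.

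The only subtle point is the bookkeeping with conjugacy, ensuring that passing from $R(w)$ to the delayed version $R_n(x)$ preserves the property of being a free basis; this is immediate once one notes conjugation is an automorphism of $FG(A)$, so there is no genuine obstacle. Everything else is a direct citation of the two quoted results, which is why the statement is marked \qed with no separate proof block.
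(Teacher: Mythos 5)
Your proposal is correct and essentially coincides with the paper's own deduction: the paper obtains Theorem~\ref{t:tree-are-free} precisely by combining the Return Theorem with Theorem~\ref{t:return-words-form-basis}, using the same observation that $R_n(x)=x_{[-n,-1]}^{-1}\,R(x_{[-n,n-1]})\,x_{[-n,-1]}$ is a free basis of $FG(A)$ because conjugation is an automorphism of $FG(A)$ (so one may take $K=FG(A)$ and $P=\ZZ^+$, noting $x_{[-n,n-1]}\in L(\Cl X)$ for \emph{every} $n$). The one point you and the paper both leave implicit is that Theorem~\ref{t:return-words-form-basis} is stated for \emph{non-periodic} minimal subshifts; this is harmless because the tree condition forces factor complexity $(M-1)n+1$, so a periodic minimal subshift satisfying it must be a one-letter fixed point, for which $G(\Cl X)$ is visibly the free profinite group of rank~$1$.
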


There are other cases of minimal subshifts \Cl X, not satisfying the
tree condition, for which $G(\Cl X)$ is known to be a free profinite group.
Indeed, it is shown in~\cite{Almeida:2005c}
that if $\Cl X$ is the subshift defined by a
weakly primitive substitution $\varphi$ which  is group invertible,
then $G(\Cl X)$ is a free profinite group.
The weakly primitive substitution
\begin{equation*}
\varphi(a)=ab,\quad
\varphi(b)=cda,\quad\varphi(c)=cd,\quad\varphi(d)=abc
\end{equation*}
is group invertible, but the minimal subshift defined by $\Cl X$ is a
subshift that fails the tree condition~\cite[Example
3.4]{Berthe&Felice&Dolce&Leroy&Perrin&Reutenauer&Rindone:2015}.

The special case of Theorem~\ref{t:tree-are-free}
in which the subshift is an Arnoux-Rauzy subshift was
previously established in~\cite{Almeida:2005c} by the first author
by extending the case of substitution Arnoux-Rauzy subshifts, for which the
substitutions are group invertible.

\section{The groupoids $K_n(\Cl X)_E$}

Let $\Cl X$ be a subshift of $\z A$.
      For every positive integer $n$,
  if $\Cl X_n$ is the subshift of $\z A$ consisting on those
    elements $x$ of $\z A$ such that $x_{[k,k+n-1]}\in L(\Cl X)$
    for every $k\in\ZZ$, then one clearly has $L(\Cl X_n)=\Mir_n(\Cl X)\cap A^+$.
    Since $\Mir_n(\Cl X)$ is a clopen subset of \Om AS,
    it follows that $\overline {L(\Cl X_n)}=\Mir_n(\Cl X)$.
    From this fact one deduces the following lemma. For the sake of uniformity,
    we denote $\Mir(\Cl X)=\bigcap_{n\geq 1}\Mir_n(\Cl X)$
    by $\Mir_\infty(\Cl X)$.

    \begin{Lemma}\label{l:irreducibility-of-mirage}
      For every $n\in\ZZ^+\cup\{\infty\}$, the set $\Mir_n(\Cl X)$ is
      irreducible.
    \end{Lemma}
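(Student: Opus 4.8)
The plan is to prove that $\Mir_n(\Cl X)$ is irreducible for every $n\in\ZZ^+\cup\{\infty\}$ by exhibiting, for any two given elements, a connecting element whose relevant factors all lie in $L(\Cl X)$. Recall that a subset is irreducible when for any $u,v$ in the set there is $w\in\Om AS$ with $uwv$ still in the set. I would treat the finite case $n\in\ZZ^+$ first and then recover the case $n=\infty$ by a compactness argument.

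For the finite case, let $u,v\in\Mir_n(\Cl X)$; by definition every factor of $u$ and of $v$ of length at most $n$ belongs to $L(\Cl X)$. The idea is to identify suitable length-$(n-1)$ words associated to the boundary of $u$ and $v$ and to bridge them using irreducibility of $L(\Cl X)$ itself (the subshift $\Cl X$ is irreducible, so $L(\Cl X)$ is irreducible as a language). Concretely, I would take the length-$(n-1)$ suffix $s$ of $u$ and the length-$(n-1)$ prefix $p$ of $v$ (these exist and lie in $L(\Cl X)$ by the hypothesis on $u$ and $v$, using the unique prefix/suffix facts recalled before Lemma~\ref{l:parametrization-of-R-L-classes} when $u,v\notin A^+$). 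Since $L(\Cl X)$ is irreducible, there is a word $t\in A^+$ with $stp\in L(\Cl X)$. The candidate connector is then $w=tp'$, where I choose things so that the product $uwv$ has all its length-$\le n$ factors inside $L(\Cl X)$. The key point is that any length-$\le n$ factor of $uwv$ that straddles a join either lies entirely inside $u$, inside $v$, or inside a window of length $<2n$ around the junctions; by construction each such window is a factor of a word of $L(\Cl X)$ (namely of $u$, of $v$, or of $stp$), hence itself in $L(\Cl X)$ by factoriality. This shows $uwv\in\Mir_n(\Cl X)$.

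For $n=\infty$, I would argue by compactness. Given $u,v\in\Mir_\infty(\Cl X)=\bigcap_{n\ge1}\Mir_n(\Cl X)$, for each $n$ the finite case provides a connector $w_n$ with $uw_nv\in\Mir_n(\Cl X)$. The set $\Om AS$ is compact, so the net $(w_n)_n$ has an accumulation point $w$. Since each $\Mir_n(\Cl X)$ is closed (it is clopen, as noted in the excerpt) and the family is descending, $uwv$ lies in $\Mir_n(\Cl X)$ for every $n$, hence in $\Mir_\infty(\Cl X)$; this uses continuity of multiplication in $\Om AS$ together with the fact that $uw_mv\in\Mir_n(\Cl X)$ for all $m\ge n$. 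Thus $\Mir_\infty(\Cl X)$ is irreducible as well. Alternatively, one can observe that $\Mir_n(\Cl X)=\overline{L(\Cl X_n)}$ for each finite $n$, so irreducibility of $\Mir_n(\Cl X)$ follows directly from irreducibility of the subshift $\Cl X_n$ via the remark that the closure in $\Om AS$ of an irreducible language is irreducible; I would use whichever of these two routes leads to the cleaner finite-$n$ argument.

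The main obstacle I anticipate is the bookkeeping in the finite case: one must verify carefully that \emph{every} length-$\le n$ factor of the concatenation $uwv$ is in $L(\Cl X)$, paying attention to the windows overlapping the two junction points and to the possibility that $u$ or $v$ is itself a short finite word. The cleanest formulation chooses the connector so that the ``seam'' words glueing $u$ to $w$ and $w$ to $v$ are each factors of a single element of $L(\Cl X)$; getting the lengths exactly right (so that no forbidden factor of length up to $n$ is accidentally created) is the delicate part, whereas the passage to $n=\infty$ is routine compactness.
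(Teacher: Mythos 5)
Your proposal is correct, and in fact your ``alternative'' route combined with your compactness argument for $n=\infty$ \emph{is} the paper's proof: the paper disposes of finite $n$ by noting that $\Cl X_n$ is irreducible whenever $\Cl X$ is, then invokes the equality $\Mir_n(\Cl X)=\overline{L(\Cl X_n)}$ (established just before the lemma from clopen-ness of $\Mir_n(\Cl X)$) together with the earlier-recalled fact that the closure in $\Om AS$ of the language of an irreducible subshift is irreducible; the case $n=\infty$ is handled exactly as you do, via an accumulation point of the connectors $w_n$ and the fact that the closed sets $\Mir_n(\Cl X)$ form a descending chain. The route you develop in detail is, however, genuinely different: you run the seam argument directly on profinite elements $u,v\in\Mir_n(\Cl X)$ rather than on finite words. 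This can be completed, but the step you call the key point --- that every factor of $utv$ of length at most $n$ lies inside $u$, inside $v$, or inside the window $stp$ --- is not automatic when $u,v\in\Om AS\setminus A^+$: one needs the true but non-trivial fact that a finite factor of a product $xy$ in $\Om AS$ is a factor of $x$, a factor of $y$, or the concatenation of a finite suffix of $x$ with a finite prefix of $y$ (compare the analogous statement from \cite[Lemma 8.2]{Almeida&Volkov:2006} that the paper quotes later for a similar purpose); proving it requires approximation by finite words and clopen-ness of the set of elements of $\Om AS$ admitting a given finite factor. The paper's formulation buys the avoidance of precisely this point: all word combinatorics stay inside $A^+$, where the straddling-factor analysis is elementary, and the passage to closures is delegated to a quoted result. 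Two small slips in your finite case: the connector should simply be $w=t$ (your ``$w=tp'$'' leaves $p'$ undefined), and when $u$ or $v$ is a short finite word one must take the suffix/prefix of length $\min\{|u|,n-1\}$ and $\min\{|v|,n-1\}$, as you yourself anticipate.
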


    \begin{proof}
      Clearly, for every $n\ge1$, if $\Cl X$ is irreducible then so is
      $\Cl X_n$, whence $\Mir_n(\Cl X)=\overline{L(\Cl X_n)}$ is
      irreducible. Let $u,v\in\Mir_\infty(\Cl X)$. For each $n\ge1$,
      there is $w_n\in \Mir_n(\Cl X)$ such that $uw_nv\in\Mir_n(\Cl
      X)$. If $w$ is an accumulation point of $(w_n)_{n\in\ZZ^+}$ then
      $w\in\Mir_n(\Cl X)$ for every $n$, since $\Mir_n(\Cl X)$ is
      closed and $w_m\in \Mir_n(\Cl X)$ for every $m\geq n$. This
      shows $\Mir_\infty(\Cl X)$ is irreducible.
    \end{proof}

    In view of Lemma~\ref{l:irreducibility-of-mirage},
    and since clearly $\Mir_n(\Cl X)$ is closed and factorial
    (irrespectively of $\Cl X$ being irreducible or not),
    we may consider the apex $K_n(\Cl X)$ of
    $\Mir_n(\Cl X)$ when $\Cl X$ is irreducible.

    The irreducibility of $\Cl X$ also
    implies that, for every positive integer $n$,
    the semigroupoid
    $\hat\Sigma_{2n}(\Cl X)$ is strongly connected,
    since $\Sigma_{2n}(\Cl X)$ is then itself strongly connected.
  
  A subsemigroupoid $T$ of a semigroupoid $S$
  is an \emph{ideal}  if for every $t\in T$ and every
  $s\in S$, $\omega(s)=\alpha(t)$ implies $st\in T$, 
  and $\omega(t)=\alpha(s)$ implies $ts\in T$.
  In a strongly connected compact semigroupoid $S$, there is an underlying \emph{minimum ideal $\KerS S$}.
  This ideal $\KerS S$ may be defined as follows. Consider
any vertex $v$ of $S$
and the local semigroup $S(v)$ of $S$ at~$v$.
Then $S(v)$ is a compact semigroup, and therefore
it has a minimum ideal $K_v$. Let $\KerS S$ be the
subsemigroupoid of~$S$ with the same
set of vertices as $S$ and whose edges are those edges of~$S$ that admit
some (and therefore every) element of~$K_v$ as a factor.
Note that $K_v=(\KerS S)(v)$.

The next lemma is folklore. The relations
$\leq_{\Cl J}$ and $\mathcal J$ in semigroupoids extend
naturally the corresponding notions for semigroups,
namely, in a semigroupoid $s\leq_{\Cl J}t$ means the edge $t$ is a factor of the edge $s$.

\begin{Lemma}
  \label{l:minimum-ideal-semigroupoid}
  If $S$ is a strongly connected compact semigroupoid, then
  $\KerS S$ is a closed ideal of $S$ that does not depend on the
  choice of~$v$. Moreover,
  the edges in $\KerS S$ are $\Cl J$-equivalent in~$S$; more precisely,
  they are $\leq_{\Cl J}$-below every edge of $S$.
\end{Lemma}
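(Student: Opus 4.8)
The plan is to reduce everything to a single, vertex-free characterization of $\KerS S$: namely that $\KerS S$ is exactly the set of edges $e$ of~$S$ that admit \emph{every} edge of $S$ as a factor (equivalently, by minimality, those that admit one fixed $k_0\in K_v$ as a factor). Once this characterization is in hand, independence of the choice of~$v$, the ideal property, and both $\Cl J$-statements follow almost formally, and only closedness will require separate compactness bookkeeping. So I fix a vertex~$v$, recall that the local semigroup $S(v)$ is a nonempty compact semigroup whose minimum ideal $K_v$ exists and is nonempty, and I fix some $k_0\in K_v$.

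First I would prove the heart of the lemma: every edge of $\KerS S$ has every edge of $S$ as a factor. Let $e\in\KerS S$, so that $e$ has some $k\in K_v$ as a factor, and let $s\colon p\to q$ be an arbitrary edge of~$S$. By strong connectedness there are edges $c\colon v\to p$ and $d\colon q\to v$ of~$S$ (when an endpoint equals~$v$ one may take a loop at~$v$, which exists since $S(v)\neq\emptyset$), so that $csd$ is a loop at~$v$, i.e. $csd\in S(v)$. Since $K_v$ is the minimum ideal of $S(v)$, the principal ideal $S(v)^1\,(csd)\,S(v)^1$ contains $K_v$, whence $k=x\,(csd)\,y$ for some $x,y\in S(v)^1$. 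As $k$ is a factor of~$e$, substituting this expression into a factorization of~$e$ through~$k$ shows that $s$ is a factor of~$e$; since $s$ was arbitrary, every edge of $S$ is a factor of~$e$. Conversely, any edge having every edge of~$S$ as a factor in particular has each element of $K_v$ as a factor, hence lies in $\KerS S$; this proves the characterization. (The parenthetical ``and therefore every'' in the definition of $\KerS S$ is the instance of this with $k,k'\in K_v$, using that any two elements of the minimum ideal $K_v$ are mutually factors.)

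From the characterization the remaining assertions follow quickly. The characterizing property makes no reference to~$v$, so defining $\KerS S$ through any other vertex~$w$ yields the same set, giving independence of the choice of vertex. Since the factor relation is transitive, if $t\in\KerS S$ and $s\in S$ are composable then $st$ (and likewise $ts$, and any composite of two edges of $\KerS S$) still has every edge of~$S$ as a factor, so $\KerS S$ is a subsemigroupoid and an ideal. Taking $s$ to be another edge of $\KerS S$ in the argument above shows that any two edges of $\KerS S$ are mutually factors, hence $\Cl J$-equivalent; and the characterization says precisely that every edge of~$S$ is a factor of every edge of $\KerS S$, i.e. the edges of $\KerS S$ are $\leq_{\Cl J}$-below every edge of~$S$.

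The step I expect to be the main technical obstacle is closedness, because composition in~$S$ is only partially defined. Here I would use that, by the characterization, $\KerS S=\{e\in S: k_0\text{ is a factor of }e\}$, and write this set as
\[
  \{k_0\}\;\cup\;\omega^{-1}(v)\,k_0\;\cup\;k_0\,\alpha^{-1}(v)\;\cup\;\omega^{-1}(v)\,k_0\,\alpha^{-1}(v),
\]
where $\omega^{-1}(v)$ and $\alpha^{-1}(v)$ are the sets of edges ending at, respectively starting at,~$v$; these are closed in the compact Hausdorff space~$S$ (preimages of a closed point under the continuous maps $\omega,\alpha$), hence compact. Each of the three product sets is the image of a compact set under the continuous composition map, every relevant pair being composable because the middle factor $k_0$ is a loop at~$v$, and is therefore compact. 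Being a finite union of compact subsets of~$S$, the set $\KerS S$ is closed, which completes the proof.
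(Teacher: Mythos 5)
The paper offers no proof to compare against here: it explicitly labels this lemma as folklore and states it without proof, so your argument is filling a genuine gap rather than shadowing an existing one. Your proof is correct and well organized. The vertex-free characterization of $\KerS S$ as the set of edges admitting \emph{every} edge of $S$ as a factor is exactly the right pivot: once it is established, independence of the choice of $v$, the subsemigroupoid/ideal property, the mutual $\Cl J$-equivalence, and the assertion that edges of $\KerS S$ are $\leq_{\Cl J}$-below every edge all follow formally, as you say. The heart of the matter -- sandwiching an arbitrary edge $s\colon p\to q$ into a loop $csd\in S(v)$ via strong connectedness and then using that the minimum ideal $K_v$ of the compact semigroup $S(v)$ is contained in the principal ideal $S(v)^1(csd)S(v)^1$ -- is the standard and correct argument. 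Your treatment of closedness is also sound, and it is the one step where care is genuinely needed because composition is only partially defined: writing $\KerS S=\{k_0\}\cup\omega^{-1}(v)\,k_0\cup k_0\,\alpha^{-1}(v)\cup\omega^{-1}(v)\,k_0\,\alpha^{-1}(v)$ expresses it as a finite union of continuous images of compact sets of composable tuples, hence compact, hence closed. Two minor points worth making explicit in a final write-up: first, $S(v)\neq\emptyset$ because strong connectedness yields an edge $v\to w$ and an edge $w\to v$ whose composite is a loop at $v$ (the paper's definition of $\KerS S$ already presupposes this); second, the closedness argument silently uses that the edge space is compact Hausdorff and that composition is continuous on the closed set of composable pairs, which is part of what ``compact semigroupoid'' means in this paper.
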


We next relate $\KerS{\hat\Sigma_{2n}(\Cl X)}$
with $K_{2n+1}(\Cl X)$.

  \begin{Lemma}\label{l:labels-in-the-minimal-ideal}
    Consider an irreducible subshift $\Cl X$ and a positive integer $n$.
    Then we have the equality
    $\hat\mu_n(\KerS{\hat\Sigma_{2n}(\Cl X)})=K_{2n+1}(\Cl X)$.
  \end{Lemma}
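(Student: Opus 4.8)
The plan is to prove the equality $\hat\mu_n(\KerS{\hat\Sigma_{2n}(\Cl X)})=K_{2n+1}(\Cl X)$ by establishing two inclusions, exploiting the structural machinery already developed. The central tool is Lemma~\ref{l:labels-of-hatsigma2n}, which tells us that $\hat\mu_n(\hat\Sigma_{2n}(\Cl X))=\Mir_{2n+1}(\Cl X)$, combined with Lemma~\ref{l:minimum-ideal-semigroupoid}, which characterizes $\KerS{\hat\Sigma_{2n}(\Cl X)}$ as the $\leq_{\Cl J}$-minimum edges of the strongly connected compact semigroupoid $\hat\Sigma_{2n}(\Cl X)$. Recall that $K_{2n+1}(\Cl X)$ is by definition the apex of the irreducible factorial subset $\Mir_{2n+1}(\Cl X)$, i.e.~the $\Cl J$-minimal regular $\Cl J$-class all of whose elements are factors of every element of $\Mir_{2n+1}(\Cl X)$.

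\textbf{The forward inclusion.} First I would show $\hat\mu_n(\KerS{\hat\Sigma_{2n}(\Cl X)})\subseteq K_{2n+1}(\Cl X)$. Let $s$ be an edge of $\KerS{\hat\Sigma_{2n}(\Cl X)}$ and put $u=\hat\mu_n(s)\in\Mir_{2n+1}(\Cl X)$. The key observation is that $\hat\mu_n$ is a semigroupoid homomorphism, so it transports factorizations: if $s$ is a factor of every edge of $\hat\Sigma_{2n}(\Cl X)$ (by Lemma~\ref{l:minimum-ideal-semigroupoid}), then for any $v\in\Mir_{2n+1}(\Cl X)$, writing $v=\hat\mu_n(t)$ for a suitable edge $t$ via the surjectivity in Lemma~\ref{l:labels-of-hatsigma2n}, the fact that $s$ divides $t$ in the semigroupoid yields that $u$ divides $v$ in $\Om AS$. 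Hence $u$ is a factor of every element of $\Mir_{2n+1}(\Cl X)$. Since the apex $K_{2n+1}(\Cl X)$ consists exactly of such universally-dividing elements (the regular ones among them), and since $u$ is regular being the image of a regular element lying in the minimum ideal, we conclude $u\in K_{2n+1}(\Cl X)$.

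\textbf{The reverse inclusion.} Conversely, to show $K_{2n+1}(\Cl X)\subseteq\hat\mu_n(\KerS{\hat\Sigma_{2n}(\Cl X)})$, take $u\in K_{2n+1}(\Cl X)$. By surjectivity (Lemma~\ref{l:labels-of-hatsigma2n}) there is an edge $s$ with $\hat\mu_n(s)=u$. I would argue that $s$ must actually lie in $\KerS{\hat\Sigma_{2n}(\Cl X)}$: since $u$ is a factor of every element of $\Mir_{2n+1}(\Cl X)$ and $\hat\mu_n$ is faithful, $s$ should be $\leq_{\Cl J}$-below every edge, landing it in the minimum ideal. The delicate point—and what I expect to be \emph{the main obstacle}—is that divisibility of labels ($u\leq_{\Cl J}v$ in $\Om AS$) does not automatically lift to divisibility of edges ($s\leq_{\Cl J}t$ in the semigroupoid), because a factorization of the label need not respect the vertex incidences of the graph. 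To handle this I would use faithfulness of $\hat\mu_n$ together with the strong connectedness of $\hat\Sigma_{2n}(\Cl X)$: given any target edge $t$, one can pre- and post-compose $s$ with connecting paths so that the resulting loop's label witnesses the division, and faithfulness then forces the edge-level factorization. Care must be taken to verify that the idempotent/regularity structure matches up, so that the preimage genuinely sits in the minimum ideal rather than merely above it; this is where the explicit description of $\KerS S$ via a local minimum ideal $K_v=(\KerS S)(v)$ in Lemma~\ref{l:minimum-ideal-semigroupoid} becomes essential.
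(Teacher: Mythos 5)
Your forward inclusion is, in substance, the paper's own argument: an edge $s$ of $\KerS{\hat\Sigma_{2n}(\Cl X)}$ is $\leq_{\Cl J}$-below every edge (Lemma~\ref{l:minimum-ideal-semigroupoid}), the homomorphism $\hat\mu_n$ transports this to $\hat\mu_n(s)\leq_{\Cl J}w$ for any $w\in K_{2n+1}(\Cl X)$, and $\leq_{\Cl J}$-minimality of the apex inside $\Mir_{2n+1}(\Cl X)$ finishes. Be aware, however, that you state it with the factor relation inverted throughout: under the paper's conventions, every edge is a factor of $s$ (not ``$s$ is a factor of every edge''), and the apex consists of the elements that every element of $\Mir_{2n+1}(\Cl X)$ divides, not of ``universally-dividing'' elements. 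Read literally, both of your intermediate claims are false, and they combine to the right conclusion only because the two reversals cancel; also, the regularity of $u$ plays no role in this step.

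The genuine gap is in the reverse inclusion. You correctly isolate the obstacle --- a factorization of labels in $\Om AS$ need not lift to a factorization of edges respecting vertex incidences --- but your proposed remedy (``pre- and post-compose $s$ with connecting paths so that the resulting loop's label witnesses the division, and faithfulness then forces the edge-level factorization'') is not an argument. Faithfulness can only identify two \emph{coterminal} edges once you already know their labels are equal; producing a kernel edge coterminal with your preimage whose label provably equals $u$ is precisely what is missing. The paper supplies the missing mechanism as follows: starting from any preimage $t$ of $w\in K_{2n+1}(\Cl X)$, choose (by strong connectedness) an edge $r$ having a kernel edge as a factor and such that $tr$ is a loop, and set $\ell=(tr)^\omega$. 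Then $\ell$ lies in $\KerS{\hat\Sigma_{2n}(\Cl X)}$ because the kernel is an ideal, so $\hat\mu_n(\ell)$ is an idempotent of $K_{2n+1}(\Cl X)$ by the already-proved inclusion; and --- this is the key step absent from your sketch --- since $w$ is a prefix of the idempotent $\hat\mu_n(\ell)$ and both lie in the same $\Cl J$-class, \emph{stability of $\Om AS$} yields $w\mathrel{\Cl R}\hat\mu_n(\ell)$, whence $\hat\mu_n(\ell)w=w$ and so $\hat\mu_n(\ell t)=w$ with $\ell t$ in the kernel. Note that this proves the inclusion without any appeal to faithfulness, since one does not need the original preimage $t$ itself to lie in the kernel, only \emph{some} preimage of $w$. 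Your stronger claim (that every preimage of an apex element lies in the kernel) is in fact true, but it follows from the above via faithfulness ($\ell t$ and $t$ are coterminal with equal labels, so $t=\ell t$); it cannot be used to establish the above, so your plan is circular as it stands.
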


  \begin{proof}
    Let $s\in \KerS{\hat\Sigma_{2n}(\Cl X)}$
    and let $w\in K_{2n+1}(\Cl X)$.
    
    By Lemma~\ref{l:labels-of-hatsigma2n}, there is
    $t\in \hat\Sigma_{2n}(\Cl X)$
    such that $\hat\mu_n(t)=w$.
    But $t$ is a factor of $s$ by
    Lemma~\ref{l:minimum-ideal-semigroupoid}, and so $w$
    is a factor of $\hat\mu_n(s)$.
    Again by Lemma~\ref{l:labels-of-hatsigma2n},
    we have $\hat\mu_n(s)\in\Mir_{2n+1}(\Cl X)$.
    The $\leq_{\Cl J}$-minimality of $K_{2n+1}(\Cl X)$ then yields
    $\hat\mu_n(s)\in K_{2n+1}(\Cl X)$,
    establishing the inclusion 
    $\hat\mu_n(\KerS{\hat\Sigma_{2n}(\Cl X)})\subseteq K_{2n+1}(\Cl X)$.
    
    On the other hand, since $\hat\Sigma_{2n}(\Cl X)$ is strongly connected,
    there is an edge $r$ in $\hat\Sigma_{2n}(\Cl X)$
    having $s$ has a factor and such that $tr$ is a loop.
    Let $\ell=(tr)^\omega$.
    Since $\KerS{\hat\Sigma_{2n}(\Cl X)}$ is an ideal,
    we have $\ell\in \KerS{\hat\Sigma_{2n}(\Cl X)}$,
  and so the idempotent $\hat\mu_n(\ell)$ belongs to $K_{2n+1}(\Cl X)$
  by the already proved inclusion.
  But $w=\hat\mu_n(t)\in K_{2n+1}(\Cl X)$
  is a prefix of
  the idempotent $\hat\mu_n(\ell)$,
  and so $w\mathrel{\Cl R}\hat\mu_n(\ell)$
  by stability of $\Om AS$.
  Hence, we have $w=\hat\mu_n(\ell)w=\hat\mu_n(\ell t)$.
  Since $\ell t\in\KerS{\hat\Sigma_{2n}(\Cl X)}$, this shows
  the reverse inclusion
  $K_{2n+1}(\Cl X)\subseteq\hat\mu_n(\KerS{\hat\Sigma_{2n}(\Cl X)})$.
\end{proof}

\begin{Cor}\label{c:existence-of-heavy-loop}
  Let $\Cl X$ be an irreducible subshift.
  Fix a positive integer $n$.
  For every vertex $v$ of $\hat\Sigma_{2n}(\Cl X)$,
  there is an idempotent loop $\ell$ of $\hat\Sigma_{2n}(\Cl X)$ rooted
  at $v$ such that $\hat\mu(\ell)\in K_{2n+1}(\Cl X)$.
\end{Cor}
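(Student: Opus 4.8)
The plan is to reduce the statement to Lemma~\ref{l:labels-in-the-minimal-ideal}, which already identifies $\hat\mu_n(\KerS{\hat\Sigma_{2n}(\Cl X)})$ with $K_{2n+1}(\Cl X)$; the only additional ingredient is the production of an \emph{idempotent} loop rooted at the prescribed vertex inside the minimum ideal. Since essentially all the substantive work has been done in the preceding lemma, the argument is short.

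First I would fix the vertex $v$ and note that, because $\Cl X$ is irreducible, the Rauzy graph $\Sigma_{2n}(\Cl X)$ is strongly connected by Remark~\ref{r:rauzy-strongly-connected}, and hence so is $\hat\Sigma_{2n}(\Cl X)$. In particular there is at least one loop rooted at $v$, so the local semigroup $\hat\Sigma_{2n}(\Cl X)(v)$ is a nonempty compact semigroup.

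Next I would invoke the description of the minimum ideal recalled just before Lemma~\ref{l:minimum-ideal-semigroupoid}: one has $(\KerS{\hat\Sigma_{2n}(\Cl X)})(v)=K_v$, where $K_v$ is the minimum ideal of the compact semigroup $\hat\Sigma_{2n}(\Cl X)(v)$. The minimum ideal of a compact semigroup is completely simple and therefore contains idempotents; choosing such an idempotent $\ell\in K_v$, we obtain an idempotent loop rooted at $v$ that lies in $\KerS{\hat\Sigma_{2n}(\Cl X)}$. Applying Lemma~\ref{l:labels-in-the-minimal-ideal} then yields
\[
  \hat\mu_n(\ell)\in\hat\mu_n\bigl(\KerS{\hat\Sigma_{2n}(\Cl X)}\bigr)=K_{2n+1}(\Cl X),
\]
which is exactly the required conclusion (here $\hat\mu(\ell)$ is read as $\hat\mu_n(\ell)$, since $\ell$ is an edge of $\hat\Sigma_{2n}(\Cl X)$).

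There is no genuine obstacle in this argument, as the heavy lifting is already carried out in Lemma~\ref{l:labels-in-the-minimal-ideal}. The one point deserving explicit mention is the standard fact from compact semigroup theory that the minimum ideal of a compact semigroup is completely simple, and so contains idempotents; this is what guarantees that $\ell$ can be taken to be idempotent rather than merely some element of $K_v$.
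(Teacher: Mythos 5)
Your proposal is correct and takes essentially the same approach as the paper: both arguments reduce the corollary to Lemma~\ref{l:labels-in-the-minimal-ideal} by producing an idempotent loop rooted at $v$ inside $\KerS{\hat\Sigma_{2n}(\Cl X)}$, and your reading of $\hat\mu(\ell)$ as $\hat\mu_n(\ell)$ is indeed the intended one. The only (immaterial) difference is how the idempotent is obtained: the paper uses strong connectedness to build a loop $q$ at $v$ having a kernel element as a factor and sets $\ell=q^\omega$, whereas you invoke the identity $K_v=(\KerS{\hat\Sigma_{2n}(\Cl X)})(v)$ together with the standard fact that the minimum ideal of a nonempty compact semigroup is completely simple and hence contains an idempotent.
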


\begin{proof}
  The graph $\hat\Sigma_{2n}(\Cl X)$ is strongly connected, and so
  every element of $\KerS{\hat\Sigma_{2n}(\Cl X)}$ is a factor of a
  loop $q$ rooted at $v$.
  The loop $\ell=q^\omega$ then satisfies the desired conditions,
  by Lemma~\ref{l:labels-in-the-minimal-ideal}.
\end{proof}

Let $S$ be a semigroup. The category $S_E$ is defined by the following
data:
\begin{enumerate}
\item the vertex set is the set of idempotents of $S$;
\item the edges from $e$ to $f$ are the triples $(e,u,f)$
  with $u\in eSf$;
\item the composition is defined by $(e,u,f)(f,v,g)=(e,uv,g)$.
\end{enumerate}
Note that $(e,e,e)$ is a local identity at each idempotent $e$ of~$S$.
The category $S_E$ was introduced in semigroup theory by Tilson in
his fundamental paper~\cite{Tilson:1987}.
Since the construction $S\mapsto S_E$ is functorial,
if $S$ is profinite,
then $S_E$ becomes a profinite category by considering
the product topology in $S\times S\times S$. In this paper we are interested
in dealing with the profinite category $(\Om AS)_E$.
For an irreducible subshift $\Cl X$ and $n\in\ZZ^+\cup\{\infty\}$,
denote by $K_n(\Cl X)_E$ the subgraph of $(\Om AS)_E$
whose vertices are the idempotents of $K_n(\Cl X)$
and whose edges are the edges $(e,u,f)$
of $(\Om AS)_E$ with $u\in K_n(\Cl X)$.

\begin{Prop}\label{p:JX_E-is-a-category}
  Let $\Cl X$ be an irreducible subshift.
  For every $n\in\ZZ^+\cup\{\infty\}$,
  the graph $K_n(\Cl X)_E$ is a closed subcategory of~$(\Om AS)_E$.
  Moreover, $K_n(\Cl X)_E$ is a profinite groupoid.
\end{Prop}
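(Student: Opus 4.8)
The plan is to establish the four assertions in turn---that $K_n(\Cl X)_E$ is a subcategory, that it is closed, that it is a groupoid, and that it is profinite---the first three resting on the same analysis of Green's relations inside the regular $\Cl J$-class $K_n(\Cl X)$, and the last on the functoriality of $S\mapsto S_E$ together with the residual finiteness of $\Om AS$.

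First I would record the backbone of the whole argument. Let $(e,u,f)$ be an edge of $K_n(\Cl X)_E$, so $e,f$ are idempotents of $K_n(\Cl X)$ and $u\in K_n(\Cl X)$ satisfies $eu=u=uf$. Because $\Om AS$ is compact, hence stable, and $e$, $f$, $u$ lie in the single $\Cl J$-class $K_n(\Cl X)$, these equalities upgrade to $e\mathrel{\Cl R}u$ and $u\mathrel{\Cl L}f$. For the subcategory claim, the triple $(e,e,e)$ is an edge and is the local identity at the vertex $e$; and for consecutive edges $(e,u,f)$ and $(f,v,g)$ one has $u\mathrel{\Cl L}f\mathrel{\Cl R}v$ with $f$ an idempotent $\Cl L$-related to $u$ and $\Cl R$-related to $v$. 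The Clifford--Miller trace-product argument then gives $uv\mathrel{\Cl R}u$ and $uv\mathrel{\Cl L}v$, so $uv\in K_n(\Cl X)$, while $e(uv)=uv=(uv)g$ shows $uv\in e\,\Om AS\,g$; hence $(e,u,f)(f,v,g)=(e,uv,g)$ is again an edge and composition is internal to $K_n(\Cl X)_E$.

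For closedness the decisive point is that the apex is a closed subset of $\Om AS$. By its defining property, $K_n(\Cl X)$ consists precisely of those $w\in\Mir_n(\Cl X)$ of which every element of $\Mir_n(\Cl X)$ is a factor; writing $S=\Om AS$,
\[
  K_n(\Cl X)=\Mir_n(\Cl X)\cap\bigcap_{m\in\Mir_n(\Cl X)}\bigl(\{m\}\cup Sm\cup mS\cup SmS\bigr).
\]
Each set $\{m\}\cup Sm\cup mS\cup SmS$ of elements admitting $m$ as a factor is a finite union of continuous images of compact spaces, hence closed, and $\Mir_n(\Cl X)$ is closed, so $K_n(\Cl X)$ is closed; the same works for $n=\infty$, Lemma~\ref{l:irreducibility-of-mirage} guaranteeing that the apex exists. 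Consequently the idempotents of $K_n(\Cl X)$ form a closed vertex set, and the edge set is cut out of $S\times S\times S$ by the closed conditions $e,f\in K_n(\Cl X)\cap E$, $u\in K_n(\Cl X)$, $eu=u=uf$ (where $E$ is the closed set of idempotents of $S$); thus $K_n(\Cl X)_E$ is a closed subcategory of $(\Om AS)_E$.

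Next comes the groupoid property, which together with the trace product is the technical heart of the proof. Given an edge $(e,u,f)$, regularity of $K_n(\Cl X)$ and $e\mathrel{\Cl R}u\mathrel{\Cl L}f$ let me produce a relative inverse: choosing $b$ with $ub=e$, the element $u'=fbe$ satisfies $uu'=e$, $u'u=f$ and $u'\in f\,\Om AS\,e$, and it lies in $K_n(\Cl X)$ because $e=uu'$ forces $u'\geq_{\Cl J}e$ while $u'=fbe$ forces $u'\leq_{\Cl J}e$. Then $(f,u',e)$ is an edge with $(e,u,f)(f,u',e)=(e,e,e)$ and $(f,u',e)(e,u,f)=(f,f,f)$, so every edge is invertible. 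The subtlety to watch here is exactly the verification that $u'$ stays in the same $\Cl J$-class rather than dropping strictly below it, which is why both displayed $\Cl J$-inequalities are needed. Finally, for profiniteness I would use that any continuous onto homomorphism $\varphi\colon\Om AS\to F$ with $F$ finite induces, by functoriality, a continuous functor $\varphi_E\colon(\Om AS)_E\to F_E$ into the finite category $F_E$, namely $(e,u,f)\mapsto(\varphi(e),\varphi(u),\varphi(f))$. Two distinct vertices or edges of $K_n(\Cl X)_E$ differ in some $\Om AS$-coordinate, and since $\Om AS$ is residually finite a single $\varphi$ separates the finitely many relevant pairs, so the restriction of $\varphi_E$ separates them by a continuous homomorphism into the finite semigroupoid $F_E$; as the image of a groupoid it is even a finite groupoid, so by the remark following Theorem~\ref{t:sigma-infty-is-a-connected-groupoid} profiniteness as a semigroupoid and as a groupoid coincide. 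Once the trace-product and relative-inverse computations are in place, closedness and profiniteness are routine, so the Green's-relations bookkeeping is the main obstacle.
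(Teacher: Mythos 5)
Your proof is correct, and it follows the same four-step skeleton as the paper's (subcategory, closedness, inverses, residual finiteness); the inverse and profiniteness steps in fact coincide with the paper's, which likewise derives $e\mathrel{\Cl R}u\mathrel{\Cl L}f$ from stability and merely cites ``basic properties of Green's relations'' for the inverse edge $(f,v,e)$ that you construct explicitly as $v=fbe$. The genuine difference is in the composition step. The paper never forms a trace product: it invokes a combinatorial lemma of Almeida and Volkov (a finite factor of a product $pqr$ with $q$ infinite is already a factor of $pq$ or of $qr$) to conclude that $uv=ufv$ still lies in $\Mir_n(\Cl X)$, and then pushes $uv$ down into $K_n(\Cl X)$ by the $\leq_{\Cl J}$-minimality of the apex. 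You instead argue purely semigroup-theoretically: stability of the compact semigroup $\Om AS$ upgrades $u\leq_{\Cl L}f$ and $v\leq_{\Cl R}f$ to $u\mathrel{\Cl L}f\mathrel{\Cl R}v$, and Miller's theorem (an idempotent that is $\Cl L$-equivalent to $u$ and $\Cl R$-equivalent to $v$ forces $uv$ to be $\Cl R$-equivalent to $u$ and $\Cl L$-equivalent to $v$) keeps $uv$ inside the $\Cl J$-class. Your variant is more portable---it works verbatim for the apex of any closed, factorial, irreducible subset of any compact (hence stable) semigroup, with no reference to finite words---whereas the paper's exploits combinatorics specific to $\Om AS$. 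Similarly, for closedness the paper simply cites that the set of idempotents and every $\Cl J$-class of $\Om AS$ are closed, while you rederive closedness of the apex from its factor-characterization; your intersection formula is valid, since an element of $\Mir_n(\Cl X)$ admitting every element of $\Mir_n(\Cl X)$ as a factor is automatically $\Cl J$-equivalent to the apex elements, hence lies in the apex. Both routes are sound; yours trades the paper's appeal to external results for slightly more Green's-relations bookkeeping.
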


\begin{proof}
  We know that $K_n(\Cl X)_E$ is
  topologically closed
  in $(\Om AS)_E$ because
  the set of idempotents of $\Om AS$
  and every $\Cl J$-class of $\Om AS$
  are closed.

  As shown in~\cite[Lemma 8.2]{Almeida&Volkov:2006}, if $w$ is a finite factor of a product
  $pqr$ with $p,q,r\in\Om AS$ and $q\notin A^+$,
  then $w$ is a factor of $pq$ or of $qr$.
  Therefore, the composition in $(\Om AS)_E$ of two edges
  of $K_n(\Cl X)_E$ belongs to $K_n(\Cl X)_E$,
  and so $K_n(\Cl X)_E$ is a subcategory of $(\Om AS)_E$.

  If $(e,u,f)$ is an edge of $K_n(\Cl X)_E$,
  then $e\mathrel{\Cl R}u\mathrel{\Cl L}f$ by stability of~\Om AS.
  It follows from the basic properties of Green's relations
  that there is some $v$ in $K_n(\Cl X)$
  such that $f\mathrel{\Cl R}v\mathrel{\Cl L}e$,
  $uv=e$ and $vu=f$. Hence $(f,v,e)$
  is an edge of $K_n(\Cl X)_E$
  that is an inverse of $(e,u,f)$, thereby establishing that
  $K_n(\Cl X)_E$ is a groupoid.

  To conclude the proof, it remains to show that $K_n(\Cl X)_E$ is
  residually finite as a topological groupoid. Since it is a
  subgroupoid of the category $(\Om AS)_E$, which is residually finite
  as a topological category, the topological groupoid $K_n(\Cl X)_E$
  is residually finite as the subcategory generated by the image of a
  homomorphism of a topological groupoid into a finite category is
  easily seen to be a groupoid.
\end{proof}

In the minimal case, we may combine
Proposition~\ref{p:JX_E-is-a-category} and Theorem~\ref{t:isomorphism}
to obtain an alternative characterization of the profinite groupoid
$\hat\Sigma_\infty(\Cl X)$ in terms of the local structure of the free
profinite semigroup \Om AS. For this purpose, we introduce some
notation that is also useful in the next section.

Suppose $\Cl X$ is a minimal subshift. For each $x\in\Cl X$, let
$\ell_x$ be the identity at $x$ in the groupoid $\hat\Sigma_\infty(\Cl
X)$ (cf.~Theorem~\ref{t:sigma-infty-is-a-connected-groupoid}). Let
$e_x$ be the idempotent $\hat\mu(\ell_x)$. Recall that $e_x$ is the
identity element of $G_x$ (cf.~Theorem~\ref{t:isomorphism}).

\begin{Rmk}\label{rmk:continuity-of-ex}
  For every minimal subshift,
  the mapping  $x\in\Cl X\mapsto \ell_x\in \hat\Sigma_\infty(\Cl X)$
  is continuous, and therefore so is
  the mapping $x\in\Cl X\mapsto e_x\in J(\Cl X)$.
\end{Rmk}

By Theorem~\ref{t:JX-in-minimal-case}, we know that $K_\infty(\Cl
X)=J(\Cl X)$. By Proposition~\ref{p:JX_E-is-a-category}, we know that
$J(\Cl X)_E$ is a profinite groupoid. Note that for each $x\in\Cl X$,
the profinite groups $G_x$ and the local group of $J(\Cl X)_E$ are
isomorphic, the mapping $u\in G_x\mapsto (e_x,u,e_x)$ being a continuous
isomorphism between them. The following gives a sort of first
geometric characterization of the groupoid $J(\Cl X)_E$.

\begin{Thm}\label{t:globalized-version-of-labeling-isomorphism}
  For every minimal subshift $\Cl X$, we have a continuous groupoid
  isomorphism $F\colon \hat\Sigma_\infty(\Cl X)\to J(\Cl X)_E$ defined
  on vertices by $F(x)=e_x$ and on edges by $F(s)
  =(e_{\alpha(s)},\hat\mu(s),e_{\omega(s)})$.
\end{Thm}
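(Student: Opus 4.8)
The plan is to construct $F$ from the data already assembled and verify it is a continuous groupoid isomorphism by checking well-definedness, functoriality, bijectivity, and continuity in turn. The map $F$ is defined on vertices by $F(x)=e_x$ and on an edge $s\colon x\to y$ of $\hat\Sigma_\infty(\Cl X)$ by $F(s)=(e_x,\hat\mu(s),e_y)$. First I would confirm that $F$ lands in $J(\Cl X)_E$. For this one must know that $\hat\mu(s)\in J(\Cl X)=K_\infty(\Cl X)$ whenever $s$ is an edge of $\hat\Sigma_\infty(\Cl X)$, together with the incidence conditions $e_x\mathrel{\Cl R}\hat\mu(s)\mathrel{\Cl L}e_y$ so that $(e_x,\hat\mu(s),e_y)$ is genuinely an edge of $J(\Cl X)_E$. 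These facts can be harvested from the proof of Theorem~\ref{t:isomorphism}: there it is shown that an edge of $\hat\Sigma_\infty(\Cl X)$ maps under $\hat\mu$ into $J(\Cl X)\setminus A^+$, and Lemma~\ref{l:vertices-eti} pins down the one-sided infinite words $\ori{\hat\mu(s)}$ and $\ole{\hat\mu(s)}$ in terms of $x$ and $y$. Combining this with the parametrization of the $\Cl R$- and $\Cl L$-classes in Lemma~\ref{l:parametrization-of-R-L-classes} gives exactly $\hat\mu(s)\mathrel{\Cl R}e_x$ and $\hat\mu(s)\mathrel{\Cl L}e_y$, which is what the edge format $(e_x,u,e_y)$ of $J(\Cl X)_E$ requires.

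Next I would check that $F$ is a groupoid homomorphism. On vertices this is the definition; on the identity $\ell_x$ it sends $\ell_x$ to $(e_x,\hat\mu(\ell_x),e_x)=(e_x,e_x,e_x)$, which is precisely the local identity of $J(\Cl X)_E$ at $e_x$. For composition, given consecutive edges $s\colon x\to y$ and $t\colon y\to z$, one has $F(st)=(e_x,\hat\mu(st),e_z)=(e_x,\hat\mu(s)\hat\mu(t),e_z)$, using that $\hat\mu$ is a semigroupoid homomorphism, and this equals $(e_x,\hat\mu(s),e_y)(e_y,\hat\mu(t),e_z)=F(s)F(t)$ by the composition law in $(\Om AS)_E$. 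Thus $F$ is a continuous functor between groupoids.

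For bijectivity, the strategy is to assemble the local isomorphisms of Theorem~\ref{t:isomorphism} into a global one. On each local group, $F$ restricts to $u\mapsto(e_x,u,e_x)$ precomposed with the identification of $G_x$ with $\hat\Sigma_\infty(\Cl X,x)$; by Theorem~\ref{t:isomorphism} the labeling $\hat\mu$ is an isomorphism $\hat\Sigma_\infty(\Cl X,x)\to G_x$, and the map $u\mapsto(e_x,u,e_x)$ is the stated continuous isomorphism $G_x\to J(\Cl X)_E(e_x)$, so $F$ is a bijection on each local group. To upgrade this to a bijection of the whole groupoids, I would use that both $\hat\Sigma_\infty(\Cl X)$ and $J(\Cl X)_E$ are connected groupoids (Theorem~\ref{t:sigma-infty-is-a-connected-groupoid} and Proposition~\ref{p:JX_E-is-a-category}) with the same vertex set in bijection via $x\mapsto e_x$. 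Surjectivity and injectivity of $F$ on vertices follow from injectivity and surjectivity of $x\mapsto e_x$: injectivity because $e_x$ determines $\li{e_x}=x$, and surjectivity because every idempotent of $J(\Cl X)$ is some $e_x$ by Lemma~\ref{l:parametrization-of-R-L-classes} and the fact that each maximal subgroup $G_x$ has an identity $e_x$. In a connected groupoid a functor that is bijective on vertices and an isomorphism on one (hence every) local group is an isomorphism, which closes the bijectivity argument; the inverse is continuous because it is given locally by $\hat\lambda$ together with the continuous section $x\mapsto\ell_x$ of Remark~\ref{rmk:continuity-of-ex}.

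Finally, continuity of $F$ I would deduce from the continuity of its constituent pieces: $\hat\mu$ is continuous by construction, and the maps $x\mapsto e_x$ (equivalently $x\mapsto\ell_x$) are continuous by Remark~\ref{rmk:continuity-of-ex}, so $s\mapsto(e_{\alpha(s)},\hat\mu(s),e_{\omega(s)})$ is continuous into the product topology on $(\Om AS)_E$. The main obstacle I anticipate is the bookkeeping needed to promote the pointwise local isomorphisms to a global groupoid isomorphism in a manner that respects the topology: one has to verify not merely that $F$ is bijective edge-by-edge, but that its inverse is continuous on the whole compact groupoid, for which the continuity of $x\mapsto\ell_x$ is the essential input. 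Once the incidence computation placing $\hat\mu(s)$ in the correct $\Cl R$- and $\Cl L$-classes is secured, the remaining verifications are formal.
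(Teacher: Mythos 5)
Your proposal is correct and takes essentially the same route as the paper: functoriality and continuity are deduced from $\hat\mu$ being a continuous semigroupoid homomorphism together with Remark~\ref{rmk:continuity-of-ex}, vertex bijectivity from Lemma~\ref{l:parametrization-of-R-L-classes}, the local isomorphisms from Theorem~\ref{t:isomorphism} composed with $u\mapsto(e_x,u,e_x)$, and finally the fact that a functor between connected groupoids which is bijective on vertices and restricts to an isomorphism on one local group is an isomorphism. Your additional worry about continuity of the inverse is superfluous (a continuous bijection between compact Hausdorff spaces is automatically a homeomorphism), and your explicit well-definedness check that $F$ lands in $J(\Cl X)_E$ merely fleshes out what the paper leaves implicit; neither point changes the argument.
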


\begin{proof}
  Note first that $F$ is clearly a functor between categories, as $\hat\mu$
  is itself a semigroupoid homomorphism.
  The continuity of $F$ follows from
  the continuity of $\hat\mu$ and Remark~\ref{rmk:continuity-of-ex}.
  Let $e$ be an idempotent of $J(\Cl X)$, and take $x=\li e$.
  Since $e_x\in G_x$, we have $e_x=e$ in view of
  Lemma~\ref{l:parametrization-of-R-L-classes},
  whence $F(x)=e$. On the other hand,
  if $F(x)=F(y)$, then $x=y$, also in view of Lemma~\ref{l:parametrization-of-R-L-classes}. This establishes that $F$ is bijective on vertices.

  Fix an element $x\in\Cl X$.
  Consider the isomorphism 
  $u\in G_x\mapsto (e_x,u,e_x)$,
  from $G_x$ onto the local group of $J(\Cl X)_E$ at $e$.
  Composing it with the restriction of $\hat\mu$ to the local group
  $\hat\Sigma_\infty(\Cl X,x)$ we get,
  thanks to Theorem~\ref{t:isomorphism},
  a continuous isomorphism, which is precisely the restriction
  of~$F$ mapping $\hat\Sigma_\infty(\Cl X,x)$ onto the local group of $J(\Cl X)_E$  at $e$.

  Finally, it is an easy exercise to show that if $H$ is
  a functor between two
  connected groupoids $S$ and $T$
  that restricts to a bijection between the
  corresponding sets of vertices and to a bijection between some local group
  of $S$ and some local group of $T$, then $H$ is an isomorphism of groupoids.
\end{proof}

The following lemma is useful in the sequel.

\begin{Lemma}
  \label{l:convergence-of-apex-idempotents}
  Let $\Cl X$ be an irreducible subshift.
  If $e$ is an idempotent in $K_\infty(\Cl X)$,
  then there is a sequence $(e_n)_n$
  of idempotents $e_n\in K_n(\Cl X)$ such that $\lim e_n=e$.
\end{Lemma}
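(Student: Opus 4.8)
The plan is to realise each approximating idempotent inside the local monoid $e\,\Om AS\,e$ and then to force convergence by a stability argument, so that no accumulation point other than $e$ can occur. Concretely, for every $n$ I would produce an idempotent $e_n\in K_n(\Cl X)$ satisfying moreover $e e_n=e_n e=e_n$; granting this, I will show that any accumulation point of the sequence $(e_n)_n$ equals $e$, and since $\Om AS$ is compact and metrizable (the alphabet $A$ being finite) this gives $e_n\to e$. Throughout I use Lemma~\ref{l:irreducibility-of-mirage} to know that the apexes $K_n(\Cl X)$ and $K_\infty(\Cl X)$ are defined, together with stability of the compact semigroup $\Om AS$.

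To build $e_n$, I would first choose an idempotent $c\in K_n(\Cl X)$, which exists because the apex is a regular $\Cl J$-class. As $e\in\Mir_\infty(\Cl X)\subseteq\Mir_n(\Cl X)$, the defining property of the apex shows that $e$ is a factor of $c$, say $c=\alpha e\beta$ for some (possibly empty) $\alpha,\beta\in\Om AS$. I then set $\delta=e\beta\alpha e$ and $e_n=\delta^\omega$. Plainly $\delta\in e\,\Om AS\,e$, hence so is $e_n$. The crux is that $e_n$ lands \emph{exactly} in $K_n(\Cl X)$ and not in a strictly higher $\Cl J$-class: writing $\delta^k=(e\beta\alpha)^k e$, each $\delta^k$ is a factor of $c^N=c$ (it is a suffix of the infix $(e\beta\alpha)^{N-1}e$ of $(\alpha e\beta)^N$), so $\delta^k\in\Mir_n(\Cl X)$ and therefore $e_n=\delta^\omega\in\Mir_n(\Cl X)$; conversely, for $k\geq 2$ the word $c=\alpha e\beta$ occurs as a factor of $\delta^k$, so $c$ is a factor of $e_n$ and $e_n\leq_{\Cl J}c$. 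Combined with the $\Cl J$-minimality of the apex inside $\Mir_n(\Cl X)$, this yields $e_n\mathrel{\Cl J}c$, i.e. $e_n\in K_n(\Cl X)$.

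Finally I would take any accumulation point $g$ of $(e_n)_n$. Since the $\Mir_m(\Cl X)$ are closed and descending and $e_n\in K_n(\Cl X)\subseteq\Mir_n(\Cl X)$, we get $g\in\bigcap_m\Mir_m(\Cl X)=\Mir_\infty(\Cl X)$. Moreover every $u\in\Mir_\infty(\Cl X)$ is a factor of each $e_n$ by the apex property, and being a factor is a closed condition, so $u$ is a factor of $g$; hence $g$ lies $\leq_{\Cl J}$ every element of $\Mir_\infty(\Cl X)$ and thus belongs to its minimum $\Cl J$-class $K_\infty(\Cl X)$. Passing the equalities $e e_n=e_n e=e_n$ to the limit gives $eg=ge=g$, so $g\leq_{\Cl R}e$ and $g\leq_{\Cl L}e$; as $g\mathrel{\Cl J}e$ and $\Om AS$ is stable, this upgrades to $g\mathrel{\Cl H}e$, and two idempotents in the same group coincide, whence $g=e$. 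With $e$ as the unique accumulation point, $e_n\to e$.

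I expect the construction step to be the main obstacle: one must guarantee simultaneously that $e_n$ sits in the apex $K_n(\Cl X)$ and that it lies in $e\,\Om AS\,e$, and this is precisely the pair of mutual-factor verifications for $\delta^\omega$ — that each $\delta^k$ is a factor of the idempotent $c$ and that $c$ is a factor of $\delta^k$ — which rest on the cyclic conjugacy between $c=\alpha e\beta$ and $\delta=e\beta\alpha e$. By contrast, the identification of the limit is the clean part of the argument: it is forced purely by stability of $\Om AS$ together with $eg=ge=g$, and uses no information specific to Rauzy graphs.
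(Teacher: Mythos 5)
Your proof is correct. It shares the paper's overall skeleton --- produce idempotents $e_n\in K_n(\Cl X)\cap e\,\Om AS\,e$ and then let stability force $e$ to be the unique accumulation point; your limit-identification step is essentially the one in the paper --- but the construction of the $e_n$, which you rightly identify as the crux, uses a genuinely different mechanism. The paper takes an arbitrary $v_n\in K_n(\Cl X)$, invokes the irreducibility of $\Mir_n(\Cl X)$ (Lemma~\ref{l:irreducibility-of-mirage}) to produce $z_n,t_n$ with $ez_nv_nt_ne\in\Mir_n(\Cl X)$, hence $ez_nv_nt_ne\in K_n(\Cl X)$ by apex minimality, and then needs Proposition~\ref{p:JX_E-is-a-category} (whose proof rests on the Almeida--Volkov lemma on finite factors of products $pqr$ with $q\notin A^+$) to ensure that the $\omega$-power of this loop at $e$ still lies in $K_n(\Cl X)$. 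You instead choose $c\in K_n(\Cl X)$ \emph{idempotent}, factorize $c=\alpha e\beta$ through $e$ by the apex property, and take $e_n=(e\beta\alpha e)^\omega$; that every power $(e\beta\alpha)^k e$ lies in $\Mir_n(\Cl X)$ is then immediate from factoriality, since idempotency of $c$ exhibits this power as a factor of $c=c^{k+1}$ itself, and the converse occurrence of $c$ inside $(e\beta\alpha e)^k$ for $k\ge 2$ yields $e_n\leq_{\Cl J}c$, so $e_n\in K_n(\Cl X)$ by apex minimality. What your route buys is economy: irreducibility enters only through the existence of the apexes, and both Proposition~\ref{p:JX_E-is-a-category} and the Almeida--Volkov lemma are bypassed, at the negligible price of needing $c$ idempotent (regularity of the apex provides it), which is exactly what powers the cyclic-conjugation trick. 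What the paper's route buys is coherence with machinery it must develop and use elsewhere anyway. Two micro-points to make explicit in a final write-up: an accumulation point $g$ of idempotents is itself idempotent (the set of idempotents is closed), which you tacitly use when concluding $g=e$; and passing from ``$c$ is a factor of each $(e\beta\alpha e)^k$'' to ``$c$ is a factor of $e_n$'' uses the same closedness of the factor relation that you invoke later for the limit.
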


\begin{proof}
  For each positive integer $n$, choose
  $v_n\in K_n(\Cl X)$.
  Since $e\in\Mir_n(\Cl X)$ and $\Mir_n(\Cl X)$ is irreducible,
  there are $z_n,t_n\in\Om AS$
  such that $ez_nv_nt_ne$ belongs to $\Mir_n(\Cl X)$,
  whence $(e,ez_nv_nt_ne,e)$ is a loop of $K_n(\Cl X)_E$,
  and so is $(e,ez_nv_nt_ne,e)^\omega$
  in view of Proposition~\ref{p:JX_E-is-a-category}.
  Therefore, the idempotent $e_n=(ez_nv_nt_ne)^\omega$ belongs to
  $K_n(\Cl X)$.

  Let $f$ be
  an accumulation point of the sequence $(e_n)_n$.
  Note that $f$ is an idempotent such that
  $f\leq_{\Cl R}e$ and $f\leq_{\Cl L}e$.
  As $m\geq n$ implies $e_m\in \Mir_n(\Cl X)$
  and because
  $\Mir_n(\Cl X)$ is closed, we have $f\in\Mir_n(\Cl X)$ for every~$n\ge1$,
  whence $f\in\Mir_\infty(\Cl X)$.
  Therefore, since $e\in K_\infty(\Cl X)$ is a factor of $f$,
  we must have $f\in K_\infty(\Cl X)$.
  As $\Om AS$ is stable, we conclude that $f=e$.
  We have shown that $e$ is the unique
  accumulation point of $(e_n)_n$,
  and so by compactness we conclude that $(e_n)_n$ converges to $e$.  
\end{proof}

\section{A geometric interpretation of $G(\Cl X)$ when $\Cl X$ is minimal}

In this section we present a series of technical results
that culminate, for the case where $\Cl X$ is a minimal subshift,
in the geometric interpretation of $G(\Cl X)$
as an inverse limit of the profinite completions of the fundamental groups of the
Rauzy graphs
$\Sigma_{2n}(\Cl X)$  (Corollary~\ref{c:the-geometric-interpretation-reformulation}).
While some preliminary results are valid for all irreducible
subshifts, 
we leave open whether our main result generalizes to that case.

By Corollary~\ref{c:existence-of-heavy-loop},
if $\Cl X$ is an irreducible subshift then,
for each vertex $w$ of $\Sigma_{2n}(\Cl X)$,
we may choose an idempotent loop $\ell_{w,n}$
of $\hat\Sigma_{2n}(\Cl X)$ rooted
at $w$ such that the idempotent $e_{w,n}=\hat\mu_n(\ell_{w,n})$
belongs to $K_{2n+1}(\Cl X)$. 

\begin{Lemma}\label{l:convergence-of-evn}
  Suppose $\Cl X$ is a minimal subshift.
    For every $x\in\Cl X$,
    the sequence $(e_{x_{[-n,n-1]},n})_n$
    converges to $e_x$.
 \end{Lemma}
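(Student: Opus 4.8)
The plan is to identify $e_x$ as the unique accumulation point of the sequence and then invoke compactness, following the pattern of the proof of Lemma~\ref{l:convergence-of-apex-idempotents}. Write $f_n=e_{x_{[-n,n-1]},n}$ for brevity. First I would record the structural facts about each $f_n$. It is an idempotent, being the image under the homomorphism $\hat\mu_n$ of the idempotent loop $\ell_{x_{[-n,n-1]},n}$. Moreover $f_n\in K_{2n+1}(\Cl X)$, and since $L(\Cl X)\subseteq L(\Cl X_{2n+1})=\Mir_{2n+1}(\Cl X)\cap A^+$ consists of words of unbounded length, every element of the apex $K_{2n+1}(\Cl X)$ has factors of unbounded length and is therefore infinite; in particular $|f_n|=+\infty$. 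Hence the integer $k=\min\{|f_n|,n\}$ of Lemma~\ref{l:facto-obvio} equals $n$, and applying that lemma to the loop $\ell_{x_{[-n,n-1]},n}$ (whose source and target are both $x_{[-n,n-1]}$) shows that $x_{[0,n-1]}$ is a prefix and $x_{[-n,-1]}$ is a suffix of $f_n$.

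Next I would analyse an arbitrary accumulation point $f$ of $(f_n)_n$. Because the sets $\Mir_m(\Cl X)$ are closed and nested decreasingly, and $f_m\in\Mir_{2m+1}(\Cl X)\subseteq\Mir_{2n+1}(\Cl X)$ whenever $m\geq n$, every accumulation point lies in each $\Mir_{2n+1}(\Cl X)$, hence in $\Mir_\infty(\Cl X)=\Mir(\Cl X)$; and $f$ is idempotent as a limit of idempotents. The prefix and suffix data then pass to the limit through the clopen conditions ``has prefix $x_{[0,j]}$'' and ``has suffix $x_{[-j,-1]}$'': for each fixed $j$, once $n>j$ the corresponding terms of a convergent subnet satisfy these conditions, so $f$ does too. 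This forces $\ori f=(x_i)_{i\in\ZZ^+_0}$ and $\ole f=(x_i)_{i\in\ZZ^-}$, that is $\li f=x$; in particular $f$ is infinite.

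Finally I would combine minimality with the parametrization of Green's relations. By Theorem~\ref{t:JX-in-minimal-case}, minimality gives $\Mir(\Cl X)\setminus A^+=J(\Cl X)$, so $f\in J(\Cl X)$. By Lemma~\ref{l:parametrization-of-R-L-classes}, the elements $u$ of $J(\Cl X)$ with $\li u=x$ are exactly the maximal subgroup $G_x$; since $f$ is an idempotent lying in this group it must be its identity, namely $e_x$. Thus every accumulation point of $(f_n)_n$ equals $e_x$, and compactness of $\Om AS$ yields $f_n\to e_x$.

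I expect the only genuinely delicate point to be the passage to the limit of the prefix and suffix conditions and the resulting identification $\li f=x$, together with the preliminary observation that $f_n$ is infinite so that $k=n$ can be used in Lemma~\ref{l:facto-obvio}; the remaining steps are a routine assembly of Lemmas~\ref{l:facto-obvio} and~\ref{l:parametrization-of-R-L-classes} and Theorem~\ref{t:JX-in-minimal-case}. One should also note that the argument is insensitive to periodicity, since $L(\Cl X)$ is infinite in the periodic case as well, keeping each $f_n$ infinite.
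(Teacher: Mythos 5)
Your proof is correct and takes essentially the same approach as the paper's: show that any accumulation point $f$ of the sequence is an idempotent of $\Mir(\Cl X)$ with $\li f=x$, identify it as $e_x$ via Lemma~\ref{l:parametrization-of-R-L-classes}, and conclude by compactness from uniqueness of the accumulation point. If anything you are more careful than the paper, which cites Lemma~\ref{l:vertices-eti} where your Lemma~\ref{l:facto-obvio} (together with the observation that the $e_{x_{[-n,n-1]},n}$, being idempotents, are infinite) is the tool that actually applies to the loops $\ell_{x_{[-n,n-1]},n}$, and which leaves implicit the appeal to Theorem~\ref{t:JX-in-minimal-case} needed to place the accumulation point in $J(\Cl X)$ before invoking Lemma~\ref{l:parametrization-of-R-L-classes}.
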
  

 \begin{proof}
   Since $\Mir(\Cl X)$ is the intersection
   of the descending chain of closed sets $(\Mir_{2n+1}(\Cl X))_n$,
   we know that every accumulation point $e$ of $(e_{x_{[-n,n-1]},n})_n$
   is an idempotent belonging to $\Mir(\Cl X)$.
   We also know that, for a fixed a positive integer $k$, the word $x_{[0,k]}$ is
   a prefix of $e_{x_{[-n,n-1]},n}$ whenever $n>k$,
   by Lemma~\ref{l:vertices-eti}. By continuity,
   we deduce that $x_{[0,k]}$ is a prefix of $e$.
   Similarly, $x_{[-k,-1]}$ is a suffix of $e$.
   Since $k$ is arbitrary, we conclude from
   Lemma~\ref{l:parametrization-of-R-L-classes} that $e=e_x$.
   Hence, by compactness, the sequence
   $(e_{x_{[-n,n-1]},n})_n$ converges to $e_x$, as $e_x$ is its sole
    accumulation point.
 \end{proof}

 Let $u\in\Om AS$. Suppose $z\in A^+$ is such that $u\in z\cdot \Om AS$. Then there is a unique $w$ in $\Om AS$
 such that $u=zw$~\cite[Exercise 10.2.10]{Almeida:1994a}. We denote $w$ by $z^{-1}u$.
The product $(z^{-1}u)z$ is denoted simply by $z^{-1}uz$, as
there is no risk of ambiguity. Observe that
if $u$ is idempotent then $z^{-1}uz$ is also idempotent.
In terms of the element $x=(x_i)_{i\in\mathbb Z}$ of the minimal
subshift $\mathcal X$, one sees that
$e_x\in x_0\cdot\Om AS$, and so we may consider
the idempotent $x_0^{-1}e_{x}x_0$.

\begin{Lemma}\label{l:conjugacy-of-ex}
  If $\Cl X$ is a minimal subshift, then
  for every $x\in\Cl X$ we have $e_{\sigma(x)}=x_0^{-1}e_{x}x_0$.
 \end{Lemma}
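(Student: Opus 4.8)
The plan is to show that $f:=x_0^{-1}e_xx_0$ is an idempotent lying in $J(\Cl X)$ whose associated bi-infinite word is $\li f=\sigma(x)$; then Lemma~\ref{l:parametrization-of-R-L-classes} forces $f\in G_{\sigma(x)}$, and since $f$ is an idempotent of the group $G_{\sigma(x)}$ it must be its identity $e_{\sigma(x)}$. That $f$ is idempotent is already recorded just before the statement, so three things remain: to set up a convenient factorization, to compute $\ori f$ and $\ole f$, and to locate $f$ in the apex.

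First I would write $w=x_0^{-1}e_x$, so that $e_x=x_0w$ and $f=wx_0$. Since $e_x$ is infinite, so is $w$, and multiplying $w$ on the right by $x_0$ leaves its prefixes unchanged while appending $x_0$ to its suffixes; dually, multiplying on the left by $x_0$ to recover $e_x$ leaves suffixes unchanged. Reading the coordinates of $e_x$ off from $\li{e_x}=x$, this gives $\ori w=x_1x_2\cdots$ and $\ole w=\ole{e_x}$, hence $\ori f=\ori w=x_1x_2\cdots$ and $\ole f=\cdots x_{-1}x_0$. These are exactly the one-sided infinite words of $\sigma(x)$, so $\li f=\sigma(x)$.

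Next I would place $f$ in $J(\Cl X)$ through Green's relations. Idempotency of $e_x$ gives $w=x_0^{-1}(e_xe_x)=we_x$, so $w\mathrel{\Cl L}e_x$ (one inequality from $e_x=x_0w$, the other from $w=we_x$). The same idempotency, via uniqueness of the left quotient, yields $x_0w=e_x=e_x^2=x_0(wx_0w)$ and hence $wx_0w=w$; thus $fw=w$, which together with $f=wx_0$ gives $f\mathrel{\Cl R}w$. Consequently $f\mathrel{\Cl R}w\mathrel{\Cl L}e_x$ all lie in the single $\Cl J$-class $J(\Cl X)$, so $f\in J(\Cl X)$. (Alternatively, one could check directly that every finite factor of $f=wx_0$ is either a factor of $w$, hence of $e_x$, or a suffix $x_{-k+1}\cdots x_0$ of $f$, hence a block of $x$, so that $f\in\Mir(\Cl X)\setminus A^+=J(\Cl X)$ by Theorem~\ref{t:JX-in-minimal-case}.)

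Finally, with $f$ an idempotent of $J(\Cl X)$ satisfying $\li f=\sigma(x)$, Lemma~\ref{l:parametrization-of-R-L-classes} identifies the group $G_{\sigma(x)}$ as the $\Cl H$-class of those $u\in J(\Cl X)$ with $\li u=\sigma(x)$, so $f\in G_{\sigma(x)}$; being idempotent in a group, $f=e_{\sigma(x)}$. I expect the main obstacle to be the passage from idempotency to the relation $wx_0w=w$ and the bookkeeping that turns it, together with $w=we_x$, into the chain $f\mathrel{\Cl R}w\mathrel{\Cl L}e_x$; everything else is a direct reading of prefixes and suffixes.
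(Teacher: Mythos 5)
Your proposal is correct and follows essentially the same route as the paper's (very terse) proof: set $f=x_0^{-1}e_xx_0$, note it is idempotent, verify $\li f=\sigma(x)$, and conclude $f=e_{\sigma(x)}$ via Lemma~\ref{l:parametrization-of-R-L-classes}. The only difference is that you explicitly justify the prefix/suffix bookkeeping and the membership $f\in J(\Cl X)$ (via the chain $f\mathrel{\Cl R}w\mathrel{\Cl L}e_x$, or alternatively via Theorem~\ref{t:JX-in-minimal-case}), steps the paper leaves implicit.
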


 \begin{proof}
   Let $w=x_0^{-1}e_{x}x_0$. Then we have $\li w=\sigma(x)$. Hence,
   $w$ is an idempotent in $G_{\sigma(x)}$, that is,
   $w=e_{\sigma(x)}$.
 \end{proof}

 By the freeness of the profinite semigroupoid
 $\hat\Sigma(\Cl X)$, we may consider
the unique continuous semigroupoid homomorphism
$\Psi\colon \hat\Sigma(\Cl X)\to \Om AS$ such that
$\Psi(s)=e_{\alpha(s)}\cdot\hat\mu(s)\cdot e_{\omega(s)}$
for every edge $s$ of $\Sigma(\Cl X)$.

\begin{Lemma}\label{l:what-Psi-is}
  Suppose $\Cl X$ is a minimal subshift.
  For every edge $s$ of $\hat\Sigma(\Cl X)$,
  we have
  \begin{equation}\label{eq:what-Psi-is-1}
      \Psi(s)=e_{\alpha(s)}\cdot\hat\mu(s)\cdot e_{\omega(s)}.
  \end{equation}
  Moreover, if $s$ is an infinite edge then
  $\Psi(s)=\hat\mu(s)$.
\end{Lemma}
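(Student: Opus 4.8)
The plan is to first establish the displayed formula~\eqref{eq:what-Psi-is-1} on the dense subsemigroupoid $\Sigma(\Cl X)^+$ of finite paths, and then extend it to all of $\hat\Sigma(\Cl X)$ by continuity, using $\hat\Sigma(\Cl X)=\overline{\Sigma(\Cl X)^+}$ (Theorem~\ref{t:free-profinite-semigroupoids-in-minimal-case}). The right-hand side of~\eqref{eq:what-Psi-is-1} defines a continuous map $\Phi\colon\hat\Sigma(\Cl X)\to\Om AS$, since $\hat\mu$ is continuous, the source and target maps are continuous, the assignment $x\mapsto e_x$ is continuous (Remark~\ref{rmk:continuity-of-ex}), and multiplication in $\Om AS$ is continuous. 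As $\Psi$ is continuous by construction and $\Om AS$ is Hausdorff, it suffices to check $\Psi=\Phi$ on finite paths.

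The key computation is the intertwining relation $\hat\mu(s)\,e_{\omega(s)}=e_{\alpha(s)}\,\hat\mu(s)$ for each edge $s$ of the generating graph $\Sigma(\Cl X)$. Writing $x=\alpha(s)$, so that $\omega(s)=\sigma(x)$ and $\hat\mu(s)=x_0$, Lemma~\ref{l:conjugacy-of-ex} gives $e_{\sigma(x)}=x_0^{-1}e_xx_0$, whence
\[
  \hat\mu(s)\,e_{\omega(s)}=x_0\cdot x_0^{-1}e_xx_0=e_xx_0=e_{\alpha(s)}\,\hat\mu(s),
\]
using $x_0\,(x_0^{-1}e_x)=e_x$. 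Now for a finite path $s=s_1\cdots s_k$ with each $s_i$ an edge of $\Sigma(\Cl X)$, I expand $\Psi(s)=\prod_i e_{\alpha(s_i)}\hat\mu(s_i)e_{\omega(s_i)}$ using that $\Psi$ is a homomorphism. Since consecutive edges satisfy $\omega(s_i)=\alpha(s_{i+1})$, adjacent idempotents merge; the intertwining relation together with the idempotency of the $e_{\alpha(s_i)}$ then lets me absorb each internal idempotent one at a time (from the left: $e_{\alpha(s_1)}\hat\mu(s_1)e_{\alpha(s_2)}=e_{\alpha(s_1)}\hat\mu(s_1)$, and so on), leaving $e_{\alpha(s_1)}\hat\mu(s_1)\cdots\hat\mu(s_k)e_{\omega(s_k)}=e_{\alpha(s)}\hat\mu(s)e_{\omega(s)}$. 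This proves~\eqref{eq:what-Psi-is-1} on finite paths, hence everywhere.

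For the second assertion, suppose $s$ is an infinite edge and set $u=\hat\mu(s)$, $x=\alpha(s)$, $y=\omega(s)$. By Theorem~\ref{t:free-profinite-semigroupoids-in-minimal-case}, $s$ is a limit of finite paths, whose labels lie in $L(\Cl X)$, so $u\in\overline{L(\Cl X)}$ by continuity; since $s$ is infinite, Lemma~\ref{l:vertices-eti} gives $u\notin A^+$, and Theorem~\ref{t:JX-in-minimal-case} then places $u$ in $J(\Cl X)$. The same lemma yields $\ori u=(x_i)_{i\in\ZZ^+_0}$ and $\ole u=(y_i)_{i\in\ZZ^-}$, which coincide with $\ori{e_x}$ and $\ole{e_y}$ because $\li{e_x}=x$ and $\li{e_y}=y$. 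By Lemma~\ref{l:parametrization-of-R-L-classes}, $e_x\mathrel{\Cl R}u$ and $e_y\mathrel{\Cl L}u$ inside $J(\Cl X)$; as $e_x$ and $e_y$ are idempotents, they act as a left and a right identity for $u$, so $e_xu=u=ue_y$. Combined with~\eqref{eq:what-Psi-is-1} this gives $\Psi(s)=e_xue_y=u=\hat\mu(s)$.

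I expect the main obstacle to be the bookkeeping in the finite-path computation: making the idempotent-absorption step precise rather than informally invoking ``repeatedly'', and keeping straight which idempotents merge by consecutiveness versus which are pushed through by the intertwining identity. Conceptually, the whole argument rests on correctly applying the conjugacy relation of Lemma~\ref{l:conjugacy-of-ex}, so I would verify that identity and the use of the partial operation $z^{-1}u$ with particular care before assembling the induction.
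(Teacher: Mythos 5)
Your proposal is correct and follows essentially the same route as the paper's proof: establish \eqref{eq:what-Psi-is-1} on finite paths via the conjugation identity of Lemma~\ref{l:conjugacy-of-ex} (your left-to-right absorption is the same computation as the paper's induction on path length), extend by continuity and density of $\Sigma(\Cl X)^+$ using Remark~\ref{rmk:continuity-of-ex}, and handle infinite edges via Lemmas~\ref{l:vertices-eti} and~\ref{l:parametrization-of-R-L-classes}. Your explicit verification that $\hat\mu(s)\in J(\Cl X)$ before invoking Lemma~\ref{l:parametrization-of-R-L-classes} is a point the paper leaves implicit, and is a welcome bit of extra care.
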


\begin{proof}
  We first establish equality~\eqref{eq:what-Psi-is-1}
  for finite paths $s$ belonging to $\Sigma(\Cl X)^+$, by
  induction on the length of $s$.
  The base case holds by the definition of $\Psi$.

  Suppose that~\eqref{eq:what-Psi-is-1}
  holds for paths in $\Sigma(\Cl X)$
  of length $k$, where $k\geq 1$, and let~$s$ be a path in
  $\Sigma(\Cl X)$ of length $k+1$.
    Factorize $s$ as $s=tr$ with~$t$ being a path of length
  $1$ and $r$ a path of length $k$. Then, by the induction hypothesis,
  and since $\Psi$ is a semigroupoid homomorphism, we have,
  \begin{equation}\label{eq:image-of-Psi-finite-path}
    \Psi(s)=\Psi(t)\Psi(r)=
    e_{\alpha(s)}\cdot\hat\mu(t)\cdot e_{\omega(t)}
    \cdot\hat\mu(r)\cdot e_{\omega(s)}.
  \end{equation}
  Since $t$ has length $1$, there is
  $x\in\Cl X$ such that $t=(x,\sigma(x))$.
  As $\alpha(s)=x$, $\omega(t)=\sigma(x)$ and $\hat\mu(t)=x_0$,
  and taking into account Lemma~\ref{l:conjugacy-of-ex},
  we obtain
    $e_{\alpha(s)}\cdot\hat\mu(t)\cdot e_{\omega(t)}
    =e_x\cdot x_0 \cdot x_0^{-1}e_xx_0
    =e_{\alpha(s)}\cdot\hat\mu(t)$.
    Hence, \eqref{eq:image-of-Psi-finite-path}
    simplifies to
    \begin{equation*}
        \Psi(s)=
    e_{\alpha(s)}\cdot\hat\mu(t)\cdot\hat\mu(r)\cdot e_{\omega(s)}
    =e_{\alpha(s)}\cdot\hat\mu(s)\cdot e_{\omega(s)},
    \end{equation*}
    which establishes the inductive step, and concludes the proof by induction
    that \eqref{eq:what-Psi-is-1} holds for finite paths.

    Denote by $\Phi$
    the mapping $\hat\Sigma(\Cl X)\to \Om AS$
    such that $\Phi(s)=e_{\alpha(s)}\cdot\hat\mu(s)\cdot e_{\omega(s)}$
    for every edge $s$ of $\hat\Sigma(\Cl X)$.
    We proved that $\Psi$ and $\Phi$ coincide
    in $\Sigma(\Cl X)^+$.
    By continuity of $\hat\mu$
    and by Remark~\ref{rmk:continuity-of-ex},
    we know that $\Phi$ is continuous.
    Hence, as $\Sigma(\Cl X)^+$ is dense
    in $\hat\Sigma(\Cl X)$
    by Theorem~\ref{t:free-profinite-semigroupoids-in-minimal-case},
    we conclude that $\Psi=\Phi$.

    Suppose $s$ is an infinite edge.
    Since $s$ and $\ell_{\alpha(s)}$ have the same source,
    $\hat\mu(s)$ and $e_{\alpha(s)}$ have the same set of finite prefixes
    by Lemma~\ref{l:vertices-eti}. This means that
    $\hat\mu(s)$ and $e_{\alpha(s)}$ are $\mathcal R$-equivalent elements
    of $J(\Cl X)$, by Lemma~\ref{l:parametrization-of-R-L-classes}. Similarly, $\hat\mu(s)$ and $e_{\omega(s)}$
    are $\mathcal L$-equivalent. This establishes $\Psi(s)=\hat\mu(s)$.
\end{proof}

We begin a series of technical lemmas
preparing a result (Proposition~\ref{p:Psi-n-converges-unif-to-Psi})
about the  approximation of $\Psi$
by a special sequence of functions in 
the function space $(\Om AS)^{\hat\Sigma(\Cl X)}$,
endowed with the pointwise topology.

\begin{Lemma}
  \label{l:special-integer-uniformity}
  Suppose $\Cl X$ is a minimal subshift.
  Let $\varphi$ be a continuous semigroup homomorphism
  from $\Om AS$ into a finite semigroup $F$.
  Then there is an integer $N_\varphi$ such that
  if $u$ is an element of $\Mir_{N_\varphi}(\Cl X)$
  with length at least $N_\varphi$,
  then
  $\varphi(u)\in\varphi(J(\Cl X))$.
\end{Lemma}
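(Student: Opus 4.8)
The plan is to argue by contradiction, leaning on the compactness of $\Om AS$, the fact that the finite words are isolated in $\Om AS$, and the descending chain of closed sets $\Mir_1(\Cl X)\supseteq\Mir_2(\Cl X)\supseteq\cdots$ with intersection $\Mir(\Cl X)$. Assuming that no integer $N_\varphi$ as in the statement exists, for every positive integer~$n$ I can pick an element $u_n\in\Mir_n(\Cl X)$ with $|u_n|\ge n$ and $\varphi(u_n)\notin\varphi(J(\Cl X))$. By compactness, the sequence $(u_n)_n$ has a cluster point $u\in\Om AS$, and the goal is to show that $u$ must lie in $J(\Cl X)$, which will contradict the choice of the $u_n$.

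First I would check that $u$ is infinite. Each finite word $w\in A^+$ is isolated in $\Om AS$, and the equality $u_n=w$ forces $n\le|u_n|=|w|$; hence $w$ is attained by only finitely many terms of the sequence, so no finite word can be a cluster point of $(u_n)_n$. Thus $u\in\Om AS\setminus A^+$. Next, fixing $m\ge1$, one has $u_n\in\Mir_n(\Cl X)\subseteq\Mir_m(\Cl X)$ for all $n\ge m$, so the sequence is eventually in the closed set $\Mir_m(\Cl X)$ and therefore its cluster point $u$ lies in $\Mir_m(\Cl X)$. As this holds for every~$m$, we obtain $u\in\bigcap_{m\ge1}\Mir_m(\Cl X)=\Mir(\Cl X)$. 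Since $u$ is infinite and $\Cl X$ is minimal, Theorem~\ref{t:JX-in-minimal-case} gives $u\in\Mir(\Cl X)\setminus A^+=J(\Cl X)$.

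To close the argument I would use the continuity of $\varphi$ together with the finiteness of~$F$. Because $u$ is a cluster point of $(u_n)_n$ and $\varphi$ is continuous, $\varphi(u)$ is a cluster point of $(\varphi(u_n))_n$ in the finite discrete space~$F$; taking the neighbourhood $\{\varphi(u)\}$ of $\varphi(u)$, this forces $\varphi(u_n)=\varphi(u)$ for infinitely many~$n$. But $u\in J(\Cl X)$ yields $\varphi(u)\in\varphi(J(\Cl X))$, whereas $\varphi(u_n)\notin\varphi(J(\Cl X))$ for every~$n$, a contradiction. Hence the required $N_\varphi$ exists.

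This is a soft compactness argument, so I do not expect any computational obstacle; the one point meriting attention is the identification $\Mir(\Cl X)\setminus A^+=J(\Cl X)$, which is precisely where the minimality of $\Cl X$ is used, via Theorem~\ref{t:JX-in-minimal-case}. For a merely irreducible subshift one has only $\ov{L(\Cl X)}\subseteq\Mir(\Cl X)$, possibly with strict inclusion, so the cluster point~$u$ need not belong to $J(\Cl X)$ and the argument would not go through.
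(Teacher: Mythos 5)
Your proof is correct, but it takes a genuinely different route from the paper's. The paper argues directly: it first picks a word $z\in L(\Cl X)$ with $\varphi(z)\in\varphi(J(\Cl X))$ and uses uniform recurrence to find $M$ such that every word of $L(\Cl X)$ of length at least $M$ contains $z$ as a factor; it then invokes Lemma~\ref{l:convergence-of-apex-idempotents} (together with the equality $K_\infty(\Cl X)=J(\Cl X)$ from Theorem~\ref{t:JX-in-minimal-case}) to obtain idempotents $e_n\in K_n(\Cl X)$ converging to an idempotent $e\in J(\Cl X)$, and chooses $N_\varphi\geq M$ with $\varphi(e_{N_\varphi})=\varphi(e)$; finally, for $u\in\Mir_{N_\varphi}(\Cl X)$ of length at least $N_\varphi$, the word $z$ is a factor of $u$ while $u$ is a factor of $e_{N_\varphi}$ (the apex property of $K_{N_\varphi}(\Cl X)$), giving the sandwich $\varphi(e_{N_\varphi})\leq_{\Cl J}\varphi(u)\leq_{\Cl J}\varphi(z)$ in $F$, from which the conclusion is drawn. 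You instead run a soft compactness argument by contradiction, whose only substantial input is Theorem~\ref{t:JX-in-minimal-case} in the form $\Mir(\Cl X)\setminus A^+=J(\Cl X)$, plus the isolation of finite words and the fact that the clopen sets $\Mir_n(\Cl X)$ decrease to $\Mir(\Cl X)$. What your approach buys is brevity and transparency: it avoids uniform recurrence, the apex machinery of the groupoids $K_n(\Cl X)$, and all Green's-relations and stability reasoning, and it yields the set-membership $\varphi(u)\in\varphi(J(\Cl X))$ immediately, since the cluster point literally lies in $J(\Cl X)$. What the paper's argument buys is structural information that meshes with how the lemma is later used (the explicit $\leq_{\Cl J}$-sandwich feeds directly into the stability arguments of Lemma~\ref{l:uniform-convergence-of-evn}), and a choice of $N_\varphi$ tied to concrete data of the subshift (its recurrence function and the convergence of apex idempotents), whereas your proof, being by contradiction, gives no handle whatsoever on $N_\varphi$. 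Your closing remark also correctly locates the role of minimality: for a merely irreducible subshift the cluster point need only lie in $\Mir(\Cl X)$, which may properly contain $\overline{L(\Cl X)}$, and the argument breaks down exactly there.
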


\begin{proof}
  Since $ J(\Cl X)\subseteq\overline{L(\Cl X)}$,
  there is $z\in L(\Cl X)$ such that $\varphi(z)\in \varphi(J(\Cl X))$.
  By the uniform recurrence of $L(\Cl X)$, there is an integer
  $M$ such that every word of $L(\Cl X)$ of length at least $M$
  contains $z$ as a factor.

  Let $e$ be an idempotent of $J(\Cl X)$.
  Since $\Cl X$ is a minimal subshift,
  by Theorem~\ref{t:JX-in-minimal-case}
  we know that $K_\infty(\Cl X)=J(\Cl X)$.
  Applying Lemma~\ref{l:convergence-of-apex-idempotents}, we
  conclude that there is a sequence $(e_n)_n$ of idempotents converging to $e$
  such that $e_n\in K_n(\Cl X)$ for every $n\ge1$.
  Hence, there is an integer
  $N_\varphi$ with $N_{\varphi}\geq M$
  for which we have $\varphi(e_n)=\varphi(e)$ whenever $n\geq N_\varphi$.

  Let $u\in\Mir_{N_\varphi}(\Cl X)$ be
  such that the length of $u$ is at least $N_\varphi$.
  Then $z$ is a factor of $u$.
  We also have $e_{N_\varphi}\leq_\Cl J u$ by the definition of $K_n(\Cl X)$.
  Hence, we obtain
  $\varphi(e_{N_\varphi})\leq_\Cl J \varphi(u) \leq_\Cl J \varphi(z)$.
  But both $\varphi(z)$ and $\varphi(e_{N_\varphi})=\varphi(e)$
  belong to $\varphi(J(\Cl X))$, thus
  $\varphi(u)\in \varphi(J(\Cl X))$.  
\end{proof}

\begin{Lemma}\label{l:uniform-convergence-of-evn}
  Let $\Cl X$, $\varphi$ and $N_{\varphi}$
  be as in Lemma~\ref{l:special-integer-uniformity}.
   For all $x\in\Cl X$ and $n\geq N_\varphi$,
   the equality $\varphi(e_x)=\varphi(e_{x_{[-n,n-1]},n})$
   holds.
 \end{Lemma}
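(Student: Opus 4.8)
The plan is to show that the two idempotents $\varphi(e_x)$ and $\varphi(e_{x_{[-n,n-1]},n})$ are $\Cl H$-equivalent in $F$; since an $\Cl H$-class of a finite semigroup contains at most one idempotent, and both elements are images of idempotents (namely $e_x$ and $e_{x_{[-n,n-1]},n}=\hat\mu_n(\ell_{x_{[-n,n-1]},n})$), this will force $\varphi(e_x)=\varphi(e_{x_{[-n,n-1]},n})$. I stress at the outset that it is \emph{not} enough to place both images in a common $\Cl J$-class (which is all that Lemma~\ref{l:special-integer-uniformity} yields on its own), because distinct idempotents can share a $\Cl J$-class. The additional leverage comes from a common prefix and a common suffix of length $n$.

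First I would record the two structural facts. By Lemma~\ref{l:parametrization-of-R-L-classes} the idempotent $e_x$ has prefix $z_+:=x_{[0,n-1]}$ and suffix $z_-:=x_{[-n,-1]}$, and by Lemma~\ref{l:vertices-eti}/Lemma~\ref{l:facto-obvio} (exactly as in the proof of Lemma~\ref{l:convergence-of-evn}) so does $e_{x_{[-n,n-1]},n}$. Now $z_+$ and $z_-$ are factors of $x\in\Cl X$, hence lie in $L(\Cl X)\subseteq\Mir_{N_\varphi}(\Cl X)$ and have length $n\ge N_\varphi$, so Lemma~\ref{l:special-integer-uniformity} gives $\varphi(z_+),\varphi(z_-)\in\varphi(J(\Cl X))$. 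Similarly $e_{x_{[-n,n-1]},n}\in K_{2n+1}(\Cl X)\subseteq\Mir_{2n+1}(\Cl X)\subseteq\Mir_{N_\varphi}(\Cl X)$ and it lies in $\Om AS\setminus A^+$ (a free semigroup has no idempotents), so its image too lies in $\varphi(J(\Cl X))$ by Lemma~\ref{l:special-integer-uniformity}; and $\varphi(e_x)\in\varphi(J(\Cl X))$ trivially since $e_x\in J(\Cl X)$. As the homomorphic image of the single $\Cl J$-class $J(\Cl X)$, the set $\varphi(J(\Cl X))$ is contained in one $\Cl J$-class of $F$, so all of $\varphi(z_+),\varphi(z_-),\varphi(e_x),\varphi(e_{x_{[-n,n-1]},n})$ are $\Cl J$-equivalent.

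Next I would extract the $\Cl H$-relation. Writing $e_x=z_+\,r$ and $e_{x_{[-n,n-1]},n}=z_+\,s$ with $r,s\in\Om AS$ (stripping the common prefix via $z_+^{-1}(\cdot)$) gives $\varphi(e_x)\le_{\Cl R}\varphi(z_+)$ and $\varphi(e_{x_{[-n,n-1]},n})\le_{\Cl R}\varphi(z_+)$. Because all three images lie in one $\Cl J$-class and $F$ is stable, these upgrade to $\varphi(e_x)\mathrel{\Cl R}\varphi(z_+)\mathrel{\Cl R}\varphi(e_{x_{[-n,n-1]},n})$. Writing each idempotent in the form $w\,z_-$ with $w\in\Om AS$ (possible since $z_-$ is a suffix) gives the dual inequalities $\le_{\Cl L}\varphi(z_-)$, which stability upgrades to $\varphi(e_x)\mathrel{\Cl L}\varphi(z_-)\mathrel{\Cl L}\varphi(e_{x_{[-n,n-1]},n})$. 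Hence $\varphi(e_x)\mathrel{\Cl H}\varphi(e_{x_{[-n,n-1]},n})$, and idempotent uniqueness in an $\Cl H$-class yields the desired equality.

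The point that must be gotten right — and the main conceptual obstacle — is precisely that $\Cl J$-equivalence of the images is insufficient, so one needs a mechanism pinning the common $\Cl R$- and $\Cl L$-classes. That mechanism is the observation that the shared finite prefix $z_+$ and suffix $z_-$ are themselves \emph{long enough} (length $n\ge N_\varphi$) to be sent by $\varphi$ into $\varphi(J(\Cl X))$, so that $\varphi(z_+)$ and $\varphi(z_-)$ can serve as common $\Cl R$- and $\Cl L$-anchors inside the $\Cl J$-class. This is where the defining property of $N_\varphi$ is invoked three times: on $e_{x_{[-n,n-1]},n}$, on $z_+$, and on $z_-$.
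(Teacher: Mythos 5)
Your proof is correct and follows essentially the same route as the paper's: both use the common prefix $x_{[0,n-1]}$ and suffix $x_{[-n,-1]}$ as $\Cl R$- and $\Cl L$-anchors, place all images in $\varphi(J(\Cl X))$ via Lemma~\ref{l:special-integer-uniformity}, upgrade the order relations to $\Cl R$- and $\Cl L$-equivalence by stability of $F$, and conclude by uniqueness of idempotents in an $\Cl H$-class. The only differences are cosmetic (e.g.\ which of Lemmas~\ref{l:parametrization-of-R-L-classes}, \ref{l:facto-obvio}, \ref{l:vertices-eti} is cited for the prefix/suffix facts, and your extra justification that $\varphi(J(\Cl X))$ lies in a single $\Cl J$-class).
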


 \begin{proof}
   By Lemmas~\ref{l:facto-obvio} and~\ref{l:vertices-eti},
   the word  $x_{[0,n-1]}$
   is a common prefix of
   $e_{x_{[-n,n-1]},n}$ and $e_x$. 
   Note also that, for $n\geq N_\varphi$,
   $x_{[0,n-1]}$, $e_{x_{[-n,n-1]},n}$, and $e_x$ belong
   to $\Mir_{N_\varphi}(\Cl X)$.
   In view of Lemma~\ref{l:special-integer-uniformity},
   we conclude that the elements of the set
   \begin{equation*}
   \{\varphi(x_{[0,n-1]}),\varphi(e_{x_{[-n,n-1]},n}),\varphi(e_x)\}     
   \end{equation*}
   belong to $\varphi(J(\Cl X))$.
   By stability of $F$, we deduce that
   \begin{equation*}
     \varphi(e_{x_{[-n,n-1]},n})
     \mathrel{\Cl R}
     \varphi(x_{[0,n-1]})
     \mathrel{\Cl R}
     \varphi(e_x).
   \end{equation*}
   Similarly, we have
   \begin{equation*}
     \varphi(e_{x_{[-n,n-1]},n})
     \mathrel{\Cl L}
     \varphi(x_{[-n,-1]})
     \mathrel{\Cl L}
     \varphi(e_x).
   \end{equation*}
   Hence $\varphi(e_{x_{[-n,n-1]},n})\mathrel{\Cl H}\varphi(e_x)$,
   and since $e_{x_{[-n,n-1]},n}$ and $e_x$ are idempotents,
   we actually have $\varphi(e_{x_{[-n,n-1]},n})=\varphi(e_x)$.   
 \end{proof}

 Let $\Cl X$ be an irreducible subshift.
 Consider the graph homomorphism
$\psi_n\colon \Sigma_{2n}(\Cl X)\to (\Om AS)_E$
defined by
\begin{equation*}
  \psi_n(s)=(e_{\alpha(s),n},e_{\alpha(s),n}\cdot \hat\mu_n(s)\cdot
  e_{\omega(s),n},e_{\omega(s),n})
\end{equation*}
for each edge $s$ of $\Sigma_{2n}(\Cl X)$.
By the freeness of the profinite semigroupoid  $\hat\Sigma_{2n}(\Cl X)$,
the graph homomorphism $\psi_n$ extends in a unique way to a continuous
semigroupoid homomorphism
$\hat\psi_n\colon \hat\Sigma_{2n}(\Cl X)\to (\Om AS)_E$.

\begin{Lemma}\label{l:the-image-is-in-the-groupoid}
  For every irreducible subshift $\Cl X$,
  the image of $\hat\psi_n$ is contained in
  the groupoid $K_{2n+1}(\Cl X)_E$.
\end{Lemma}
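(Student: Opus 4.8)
The plan is to verify the inclusion first on the generating graph $\Sigma_{2n}(\Cl X)$ and then propagate it to all of $\hat\Sigma_{2n}(\Cl X)$ by a closedness argument. Since $K_{2n+1}(\Cl X)_E$ is a closed subcategory of $(\Om AS)_E$ by Proposition~\ref{p:JX_E-is-a-category}, and since $\hat\Sigma_{2n}(\Cl X)=\lceil\Sigma_{2n}(\Cl X)\rceil$, I would apply Lemma~\ref{l:image-of-semigroupoid-homomorphism} to $\hat\psi_n$ to get $\hat\psi_n(\hat\Sigma_{2n}(\Cl X))\subseteq\lceil\psi_n(\Sigma_{2n}(\Cl X))\rceil$; as $K_{2n+1}(\Cl X)_E$ is a closed subsemigroupoid, it then suffices to show that $\psi_n$ already maps $\Sigma_{2n}(\Cl X)$ into $K_{2n+1}(\Cl X)_E$. (Equivalently, one may argue directly from the density of $\Sigma_{2n}(\Cl X)^+$ in $\hat\Sigma_{2n}(\Cl X)$ together with the continuity of $\hat\psi_n$.)

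On vertices there is nothing to prove: each $e_{w,n}$ is by construction an idempotent of $K_{2n+1}(\Cl X)$, which is precisely a vertex of $K_{2n+1}(\Cl X)_E$. The substance lies in the edges. Fixing an edge $s$ of $\Sigma_{2n}(\Cl X)$ and writing $u=e_{\alpha(s),n}\cdot\hat\mu_n(s)\cdot e_{\omega(s),n}$, so that $\psi_n(s)=(e_{\alpha(s),n},u,e_{\omega(s),n})$, I must show $u\in K_{2n+1}(\Cl X)$. The crucial point is that $u$ is not merely a formal product of three elements of $\Mir_{2n+1}(\Cl X)$, but is genuinely the $\hat\mu_n$-label of a path, namely of $\ell_{\alpha(s),n}\cdot s\cdot\ell_{\omega(s),n}$ (which is composable because $\ell_{\alpha(s),n}$ and $\ell_{\omega(s),n}$ are loops at $\alpha(s)$ and $\omega(s)$). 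Consequently $u\in\hat\mu_n(\hat\Sigma_{2n}(\Cl X))=\Mir_{2n+1}(\Cl X)$ by Lemma~\ref{l:labels-of-hatsigma2n}.

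With $u\in\Mir_{2n+1}(\Cl X)$ in hand, the conclusion follows from a two-sided $\Cl J$-comparison. On one side, $e_{\alpha(s),n}$ is a prefix, hence a factor, of $u$, so $u\leq_{\Cl J}e_{\alpha(s),n}$. On the other side, since $K_{2n+1}(\Cl X)$ is the apex of $\Mir_{2n+1}(\Cl X)$, every element of $\Mir_{2n+1}(\Cl X)$ is a factor of every element of $K_{2n+1}(\Cl X)$; applying this to $u\in\Mir_{2n+1}(\Cl X)$ and $e_{\alpha(s),n}\in K_{2n+1}(\Cl X)$ yields $e_{\alpha(s),n}\leq_{\Cl J}u$. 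Hence $u\mathrel{\Cl J}e_{\alpha(s),n}$, and as the $\Cl J$-class of $e_{\alpha(s),n}$ is $K_{2n+1}(\Cl X)$, we obtain $u\in K_{2n+1}(\Cl X)$, as required.

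The one genuinely delicate step is the membership $u\in\Mir_{2n+1}(\Cl X)$: a product $e\cdot w\cdot f$ of three members of $\Mir_{2n+1}(\Cl X)$ need not itself lie in $\Mir_{2n+1}(\Cl X)$, since concatenation can create short factors outside $L(\Cl X)$, so this membership has to be obtained structurally—by exhibiting $u$ as an honest path label—rather than by a naive closure argument. An alternative route would go through the minimum ideal, observing that the chosen loops belong to $\KerS{\hat\Sigma_{2n}(\Cl X)}$ and invoking Lemma~\ref{l:labels-in-the-minimal-ideal}; I would nonetheless prefer the $\Cl J$-comparison above, as it relies only on the defining property of the apex and is independent of the particular construction of the loops $\ell_{w,n}$.
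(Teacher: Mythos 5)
Your proposal is correct and follows essentially the same route as the paper's proof: the key step of exhibiting $u=e_{\alpha(s),n}\cdot\hat\mu_n(s)\cdot e_{\omega(s),n}$ as the label $\hat\mu_n(\ell_{\alpha(s),n}\cdot s\cdot\ell_{\omega(s),n})$ of an actual path, hence an element of $\Mir_{2n+1}(\Cl X)$ via Lemma~\ref{l:labels-of-hatsigma2n}, followed by the $\leq_{\Cl J}$-minimality of the apex, and the final propagation to all of $\hat\Sigma_{2n}(\Cl X)$ by Lemma~\ref{l:image-of-semigroupoid-homomorphism} and closedness of $K_{2n+1}(\Cl X)_E$, is exactly the paper's argument.
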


\begin{proof}
  Let $s$ be an edge of $\Sigma_{2n}(\Cl X)$.
  By their definition,
  the idempotents $e_{\alpha(s),n}$ and $e_{\omega(s),n}$
  belong to $K_{2n+1}(\Cl X)$.
  Take $u=e_{\alpha(s),n}\cdot \hat\mu_n(s)\cdot e_{\omega(s),n}$.
  We have $u=\hat\mu_n(\ell_{\alpha(s),n}\cdot s\cdot \ell_{\omega(s),n})$.
  Since $\ell_{\alpha(s),n}\cdot s\cdot \ell_{\omega(s),n}$
  belongs to $\hat\Sigma_{2n}(\Cl X)$,
  we must have $u\in\Mir_{2n+1}(\Cl X)$ by
  Lemma~\ref{l:labels-of-hatsigma2n}.
  But $u=e_{\alpha(s),n}\cdot u\cdot e_{\omega(s),n}$, and so $u\in K_{2n+1}(\Cl X)$
  by the $\leq_{\Cl J}$-minimality of $K_{2n+1}(\Cl X)$, establishing that
  $\hat\psi_n(s)$ belongs to $K_{2n+1}(\Cl X)_E$. 
  Since $K_{2n+1}(\Cl X)_E$ is a closed subcategory of
  $(\Om AS)_E$,
  applying     Lemma~\ref{l:image-of-semigroupoid-homomorphism}
  we conclude that the image of $\hat\psi_n$ is contained in $K_{2n+1}(\Cl X)_E$.
\end{proof}

Denote by $\gamma$ the continuous semigroupoid homomorphism
$(\Om AS)_E\to\Om AS$ defined by $\gamma(e,u,f)=u$.
Consider the following sequence of continuous semigroupoid homomorphisms:
\begin{equation*}
  \xymatrix{
    \hat\Sigma(\Cl X)\ar[r]^{\hat p_n}
    &\hat\Sigma_{2n}(\Cl X)\ar[r]^{\hat\psi_n}
    &(\Om AS)_E\ar[r]^{\gamma}
    &\Om AS.
        }
\end{equation*}
Let $\Psi_n=\gamma\circ\hat\psi_n\circ \hat p_n$ be the resulting composite.

For the next proposition, we take into account
the metric $d$ of $\Om AS$
such that if $u$ and $v$ are distinct elements of $\Om AS$,
then $d(u,v)=2^{-r(u,v)}$,
where $r(u,v)$ is the minimum possible cardinality of a finite semigroup~$F$
for which there is a continuous
homomorphism $\varphi\colon \Om AS\to F$
satisfying $\varphi(u)\neq\varphi(v)$. The hypothesis
which we have been using that $A$ is finite guarantees
that the metric $d$ generates the topology of
$\Om AS$~\cite{Almeida:2002a,Almeida:2003b}.

\begin{Prop}\label{p:Psi-n-converges-unif-to-Psi}
  Suppose $\Cl X$ is a minimal subshift.
  Endow the function space $(\Om AS)^{\hat\Sigma(\Cl X)}$
  with the pointwise topology.
  Then the sequence $(\Psi_n)_n$ converges uniformly to $\Psi$.
\end{Prop}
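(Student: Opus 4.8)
The plan is to exploit the description of the metric $d$ recalled just above the statement: uniform convergence $\Psi_n\to\Psi$ amounts to showing that for each integer $k\geq 1$ there is an index $N$ such that, for all $n\geq N$ and \emph{every} edge $s$ of $\hat\Sigma(\Cl X)$, no continuous homomorphism $\varphi\colon\Om AS\to F$ into a finite semigroup with $|F|\leq k$ separates $\Psi_n(s)$ from $\Psi(s)$. Since $A$ is finite, a continuous homomorphism into a fixed finite semigroup is determined by the images of the finitely many letters, so there are only finitely many homomorphisms $\varphi\colon\Om AS\to F$ with $|F|\leq k$. It therefore suffices to prove, for each individual continuous homomorphism $\varphi\colon\Om AS\to F$ into a finite semigroup, that there is an integer $N_\varphi$ with $\varphi\circ\Psi_n=\varphi\circ\Psi$ for all $n\geq N_\varphi$; the desired $N$ is then the maximum of the finitely many relevant $N_\varphi$.

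Fix such a $\varphi$. Both $\Psi$ and $\Psi_n=\gamma\circ\hat\psi_n\circ\hat p_n$ are continuous semigroupoid homomorphisms from $\hat\Sigma(\Cl X)$ into $\Om AS$, viewed as a one-vertex semigroupoid, so $\varphi\circ\Psi$ and $\varphi\circ\Psi_n$ are continuous semigroupoid homomorphisms $\hat\Sigma(\Cl X)\to F$. By Theorem~\ref{t:free-profinite-semigroupoids-in-minimal-case}, the set $\Sigma(\Cl X)^+$ of finite paths is dense in $\hat\Sigma(\Cl X)$; as $F$ is Hausdorff, two such homomorphisms that coincide on $\Sigma(\Cl X)^+$ coincide everywhere. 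Being homomorphisms, they coincide on $\Sigma(\Cl X)^+$ as soon as they coincide on the edges of $\Sigma(\Cl X)$. Thus the whole matter reduces to checking the single-edge identity $\varphi(\Psi_n(s))=\varphi(\Psi(s))$ for every edge $s$ of $\Sigma(\Cl X)$, with $N_\varphi$ still to be chosen.

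The last step is an explicit computation on an edge $s=(x,\sigma(x))$ of $\Sigma(\Cl X)$. Here $\Psi(s)=e_x\cdot\hat\mu(s)\cdot e_{\sigma(x)}$ by Lemma~\ref{l:what-Psi-is}, while unwinding the definitions of $\hat p_n$, $\hat\psi_n$ and $\gamma$ (using $\hat\mu_n\circ\hat p_n=\hat\mu$ together with $\hat\mu(s)=x_0$) gives $\Psi_n(s)=e_{x_{[-n,n-1]},n}\cdot x_0\cdot e_{\sigma(x)_{[-n,n-1]},n}$. Taking $N_\varphi$ as in Lemma~\ref{l:special-integer-uniformity}, Lemma~\ref{l:uniform-convergence-of-evn} yields, for every $n\geq N_\varphi$, the equalities $\varphi(e_x)=\varphi(e_{x_{[-n,n-1]},n})$ and $\varphi(e_{\sigma(x)})=\varphi(e_{\sigma(x)_{[-n,n-1]},n})$; applying $\varphi$ to the two products then gives $\varphi(\Psi_n(s))=\varphi(\Psi(s))$. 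I expect the only genuinely delicate point to be the \emph{uniformity} in the edge $s$: pointwise convergence would already follow from Lemma~\ref{l:convergence-of-evn} alone, but the uniform statement hinges on the fact that Lemma~\ref{l:uniform-convergence-of-evn} provides a \emph{single} threshold $N_\varphi$ valid simultaneously for all $x\in\Cl X$. Once the edge identity holds for $n\geq N_\varphi$, the homomorphism-plus-density argument propagates it to all of $\hat\Sigma(\Cl X)$ at no extra cost, including the infinite edges, which is precisely what upgrades the convergence from pointwise to uniform.
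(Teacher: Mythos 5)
Your proof is correct and follows essentially the same route as the paper's: reduce uniform convergence to the identity $\varphi\circ\Psi_n=\varphi\circ\Psi$ for finite-semigroup quotients $\varphi$ with $n\geq N_\varphi$ (via Lemmas~\ref{l:special-integer-uniformity} and~\ref{l:uniform-convergence-of-evn}), verify it on length-one edges, and propagate by density of $\Sigma(\Cl X)^+$ in $\hat\Sigma(\Cl X)$. The only (cosmetic) difference is how uniformity in $s$ is extracted from the metric: the paper invokes an auxiliary lemma giving a single homomorphism $\varphi$ with $d(u,v)<2^{-k}\iff\varphi(u)=\varphi(v)$, whereas you take the maximum of $N_\varphi$ over the finitely many homomorphisms into semigroups of cardinality at most $k$ --- which is in effect the proof of that auxiliary lemma.
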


\begin{proof}
  Fix a positive integer $k$.
  We want to show that
  there is a positive integer $N_k$
  such that if $n\geq N_k$
  then $d(\Psi_n(s),\Psi(s))<\frac{1}{2^k}$
  for every $s\in  \hat\Sigma(\Cl X)$.
  For that purpose we use the following auxiliary
  lemma, whose proof is a standard exercise. It appears implicitly in
  the first part of the proof of Proposition 7.4
  from  \cite{Almeida:2002a}.
  
  \begin{Lemma}
    Fix a positive integer $k$.
    There is a continuous
    semigroup homomorphism $\varphi$ from $\Om AS$ onto a finite semigroup $F$
  such that
  \begin{equation}
        \label{eq:ball-homo}
  d(u,v)<\frac{1}{2^k}\Longleftrightarrow \varphi(u)=\varphi(v).
\end{equation}
\end{Lemma}

Proceeding with the proof of Proposition~\ref{p:Psi-n-converges-unif-to-Psi},
let $\varphi:\Om AS\to F$ be a continuous homomorphism onto a finite
semigroup $F$ such that the equivalence~\eqref{eq:ball-homo} holds.
Let $N_\varphi$ be an integer as in
Lemmas~\ref{l:special-integer-uniformity} and~\ref{l:uniform-convergence-of-evn}.
Consider an integer $n$ with $n\geq N_\varphi$.
  In view of equivalence~\eqref{eq:ball-homo},
  the proposition is proved once
  we show that, for every edge $s$ of $\hat\Sigma(\Cl X)$,
  we have
  \begin{equation}
    \label{eq:uniformity-Psi}
    \varphi(\Psi_n(s))=\varphi(\Psi(s)).
  \end{equation}   
If $s$ has length $1$, that is, if $s$ is an edge $(x,\sigma(x))$ of
$\Cl X$, for some $x\in\Cl X$, then we have
\begin{equation}
  \label{eq:p:properties-of-Psi-a}
  \Psi_n(s)=
  e_{\alpha(\hat p_{n}(s)),n}
  \cdot
  \hat\mu_{n}(\hat p_{n}(s))
  \cdot
  e_{\omega(\hat p_{n}(s)),n}.
\end{equation}
Because $n\geq N_\varphi$,
it follows from Lemma~\ref{l:uniform-convergence-of-evn} that
\begin{equation}
  \label{eq:p:properties-of-Psi-b}
\varphi(e_{\alpha(\hat p_{n}(s)),n})=
\varphi(e_{\alpha(s)})\quad
\text{and}\quad
\varphi(e_{\omega(\hat p_{n}(s)),n})=
\varphi(e_{\omega(s)}).  
\end{equation}
Since $\hat\mu_n\circ\hat p_n=\hat\mu$,
from~\eqref{eq:p:properties-of-Psi-a}
and~\eqref{eq:p:properties-of-Psi-b}
we obtain~\eqref{eq:uniformity-Psi} in the case where
$s$ has length $1$.
Hence, $\varphi\circ\Psi_n$ and $\varphi\circ\Psi$
are continuous semigroupoid homomorphisms coinciding in
$\Sigma(\Cl X)$. Since $\Sigma(\Cl X)^+$ is dense
in $\hat\Sigma(\Cl X)$
by Theorem~\ref{t:free-profinite-semigroupoids-in-minimal-case},
it follows that we actually have
$\varphi\circ\Psi_n=\varphi\circ\Psi$, thereby
 establishing~\eqref{eq:uniformity-Psi}.
\end{proof}

Suppose $\Cl X$ is an irreducible subshift.
As the graph $\Sigma_{2n}(\Cl X)$ is strongly connected,
for each edge $s\colon v_1\to v_2$ of $\Sigma_{2n}(\Cl X)$
one can choose a path $s'\colon v_2\to v_1$ in $\Sigma_{2n}(\Cl X)$.
Denote by $s^\ast$ the edge $(s's)^{\omega-1}s'$
of $\hat\Sigma_{2n}(\Cl X)$ from $v_2$ to $v_1$.

\begin{Rmk}\label{rmk:groupoid-image-of-paths}
  For every edge $s$ of $\Sigma_{2n}(\Cl X)$, the loops
  $s^\ast\cdot s$ and $s\cdot s^\ast$ are idempotents.
  Therefore, if $\varphi$ is a semigroupoid homomorphism
  from $\hat\Sigma_{2n}(\Cl X)$ into a groupoid, then
  $\varphi(s^\ast)=\varphi(s)^{-1}$ for every edge $s$.
\end{Rmk}

Recall how in Section~\ref{sec:fundamental-groupoid}
we defined the graph $\widetilde\Gamma$ from a graph $\Gamma$,
and denote $\widetilde{\Sigma_{2n}(\Cl X)}$
by~$\widetilde\Sigma_{2n}(\Cl X)$.
Let $t^{\varepsilon}$
be an edge of  $\widetilde\Sigma_{2n}(\Cl X)$,
where $t$ is an edge of $\Sigma_{2n}(\Cl X)$, $\varepsilon\in\{-1,1\}$
and $t^1=t$.
We define
\begin{equation*}
  (t^\varepsilon)^+=
  \begin{cases}
    t&\text{if $\varepsilon=1$},\\
    t^\ast&\text{if $\varepsilon=-1$}.\\
  \end{cases}
\end{equation*}
If $s=s_1s_2\cdots s_k$ is a path,
where each $s_i$ is an edge of $\widetilde\Sigma_{2n}(\Cl X)$,
then
we define
$s^+=s_1^+ s_2^+\cdots s_k^+$.
Note that $s^+$
is an edge of $\hat\Sigma_{2n}(\Cl X)$
such that $\alpha(s^+)=\alpha(s)$
and $\omega(s^+)=\omega(s)$. We also follow the usual definition
$s^{-1}=s_k^{-1}\cdots s_2^{-1}s_1^{-1}$.

If $1_v$ is the empty path at some vertex $v$
of $\widetilde\Sigma_{2n}(\Cl X)$, then one takes
$1_v=1_v^{-1}=1_v^\ast=1_v^+$,
and if $\varphi$ is a semigroupoid homomorphism
from $\hat\Sigma_{2n}(\Cl X)$ into a groupoid,
then one defines $\varphi(1_v)$ as being the local unit at
$\varphi(v)$.

\begin{Lemma}\label{l:an-exercise-on-hat}
  Consider an irreducible subshift $\Cl X$.
  Let $\varphi$ be a semigroupoid homomorphism
  from $\hat\Sigma_{2n}(\Cl X)$ into a groupoid,
  and let $t$ be a (possible empty) path of $\widetilde\Sigma_{2n}(\Cl X)$.
  Then we have
  \begin{equation}\label{eq:an-exercise-on-hat-1}
      \varphi(t^+)^{-1}
  =\varphi((t^{-1})^+).
  \end{equation}
\end{Lemma}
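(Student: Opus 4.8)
The plan is to argue by induction on the length $k$ of the path $t$, with the single substantive input being the identity $\varphi(s^\ast)=\varphi(s)^{-1}$ recorded in Remark~\ref{rmk:groupoid-image-of-paths}. All the definitions involved ($(\,\cdot\,)^+$ and $(\,\cdot\,)^{-1}$ on paths, and the conventions for empty paths) are purely formal, so the work is essentially reduced to combining the homomorphism property of $\varphi$ with the fact that inversion reverses products in a groupoid.

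For the base cases, I would first treat $t=1_v$: by convention $t^+=1_v=(t^{-1})^+$ and $\varphi(1_v)$ is the local identity at $\varphi(v)$, which is its own inverse, so both sides of~\eqref{eq:an-exercise-on-hat-1} equal $\varphi(1_v)$. Next suppose $t=s^\varepsilon$ has length one, where $s$ is an edge of $\Sigma_{2n}(\Cl X)$. If $\varepsilon=1$, then $t^+=s$ while $(t^{-1})^+=(s^{-1})^+=s^\ast$, so~\eqref{eq:an-exercise-on-hat-1} reads $\varphi(s)^{-1}=\varphi(s^\ast)$, which is exactly Remark~\ref{rmk:groupoid-image-of-paths}. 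If $\varepsilon=-1$, then $t^+=s^\ast$ and $(t^{-1})^+=s$, so the required equality $\varphi(s^\ast)^{-1}=\varphi(s)$ follows from the same remark by inverting.

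For the inductive step, I would factor $t=s\cdot r$ with $s$ a single edge of $\widetilde\Sigma_{2n}(\Cl X)$ and $r$ a shorter path. The definitions give $t^+=s^+r^+$ and $t^{-1}=r^{-1}s^{-1}$, whence $(t^{-1})^+=(r^{-1})^+(s^{-1})^+$. Applying $\varphi$, using that it is a semigroupoid homomorphism and that inversion in a groupoid reverses products, I obtain $\varphi(t^+)^{-1}=\varphi(r^+)^{-1}\varphi(s^+)^{-1}$. The induction hypothesis applied to $r$ together with the length-one case applied to $s$ then rewrite the right-hand side as $\varphi((r^{-1})^+)\varphi((s^{-1})^+)=\varphi\bigl((r^{-1})^+(s^{-1})^+\bigr)=\varphi((t^{-1})^+)$, completing the induction.

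The only point requiring care is the bookkeeping of incidences: one must confirm that $r^{-1}$ and $s^{-1}$ are consecutive in $\widetilde\Sigma_{2n}(\Cl X)$ (which holds since $\omega(r^{-1})=\alpha(r)=\omega(s)=\alpha(s^{-1})$) and that, because $(\,\cdot\,)^+$ preserves source and target, this carries over to a legitimate composite in $\hat\Sigma_{2n}(\Cl X)$, so that $\varphi$ may be applied factor by factor. Beyond this routine verification the argument presents no real obstacle, since all of the content is concentrated in the length-one case, i.e.\ in Remark~\ref{rmk:groupoid-image-of-paths}.
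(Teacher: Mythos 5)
Your proof is correct and follows essentially the same route as the paper's: induction on the length of $t$, with the empty path handled by convention, the length-one case (split into $\varepsilon=\pm1$) reduced to Remark~\ref{rmk:groupoid-image-of-paths}, and the inductive step combining the homomorphism property with reversal of products under inversion. The only cosmetic difference is that you split off the first edge ($t=sr$) where the paper splits off the last ($t=rs$), which changes nothing of substance.
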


\begin{proof}
    The case where $t$ is an empty path is immediate.
  We show~\eqref{eq:an-exercise-on-hat-1} by induction
  on the length of $t$.
  Suppose that $t$ has length $1$.
  Either $t\in\Sigma_{2n}(\Cl X)$
  or $t^{-1}\in\Sigma_{2n}(\Cl X)$.
  In the first case we have
  $t^+=t$
  and $(t^{-1})^+=t^\ast$,
  while in the second
  case we have
  $t^+=(t^{-1})^\ast$
  and $(t^{-1})^+=t^{-1}$.
  In either case, \eqref{eq:an-exercise-on-hat-1}
  follows from Remark~\ref{rmk:groupoid-image-of-paths}.

  Suppose that \eqref{eq:an-exercise-on-hat-1}
  holds for paths of length less than $k$, where $k>1$.
  Let  $t$ be a path of $\widetilde\Sigma_{2n}(\Cl X)$ of length
  $k$, and consider a factorization in $\widetilde\Sigma_{2n}(\Cl X)^+$
  of the form $t=rs$ with $s$ an edge of $\widetilde\Sigma_{2n}(\Cl X)$.

  Then $t^+=r^+ s^+$.
  Therefore, applying the inductive hypothesis, we get
  \begin{align*}
    \varphi(t^+)^{-1}&
    =\varphi(s^+)^{-1}\cdot
    \varphi(r^+)^{-1}
    \\
    &=
    \varphi((s^{-1})^+)\cdot
    \varphi((r^{-1})^+)
    =\varphi\bigl((s^{-1}r^{-1})^+)
    =\varphi\bigl((t^{-1})^+),
  \end{align*}
  which establishes the inductive step and concludes the proof.
\end{proof}

Recall that a \emph{spanning tree} $T$ of a graph $\Gamma$
is a subgraph of $\Gamma$ which, with respect to inclusion,
is maximal for the property
that the undirected graph underlying $T$ is both connected and
without cycles. In what follows, we say that a path $t_1\ldots t_k$
of $\widetilde \Gamma$ lies in $T$ if for each $i\in \{1,\ldots,k\}$
we have $t_i\in T$ or $t_i^{-1}\in T$.

Fix an element $x\in\Cl X$
and, for each $n$, fix a spanning tree $T$ of the graph $\Sigma_{2n}(\Cl X)$.
For each pair of vertices $v_1,v_2$ of $\Sigma_{2n}(\Cl X)$,
let $T_{v_1,v_2}$ be the unique path of $\widetilde\Sigma_{2n}(\Cl X)$
from $v_1$ to $v_2$ that does not repeat vertices and lies in~$T$.
Note that $T_{v_1,v_2}=T_{v_2,v_1}^{-1}$, and that $T_{v,v}$ is the empty path at
$v$.
For each vertex $v$
let $\gamma_v= T_{v,x_{[-n,n-1]}}$
and $\delta_v= T_{x_{[-n,n-1]},v}$. In particular, we have
  $\gamma_v=\delta_v^{-1}$.  

  For each edge $s$ of $\Sigma_{2n}(\Cl X)$, consider the element
  $g_s$ of the local group $\Pi_{2n}(\Cl X,x)$ given by $g_s=
  (\delta_{\alpha(s)}\cdot s\cdot \gamma_{\omega(s)})/{\sim}$. Note
  that $g_s$ is the identity of $\Pi_{2n}(\Cl X,x)$ if $s$ belongs
  to~$T$. Denote by $Y$ the set of edges of $\Sigma_{2n}(\Cl X)$ not
  in~$T$. It is a well known fact that the set $B=\{g_s\mid s\in Y\}$
  is a free basis of the fundamental group $\Pi_{2n}(\Cl
  X,x)$~\cite{Lyndon&Schupp:1977}. Hence, $B$ is a basis of the
  free profinite group $\hat \Pi_{2n}(\Cl X,x)$. In view
  of~Lemma~\ref{l:the-image-is-in-the-groupoid}, we may therefore
  consider the unique continuous group homomorphism $\zeta_n$ from
  $\hat \Pi_{2n}(\Cl X,x)$ into the local group of the profinite
  groupoid $K_{2n+1}(\Cl X)_E$ at $e_n=e_{x_{[-n,n-1]},n}$ such that
  \begin{equation*}
    \zeta_n(g_s)=
    \hat\psi_n
    \Bigl(\delta_{\alpha(s)}^+
    \cdot s\cdot
    \gamma_{\omega(s)}^+
    \Bigr)
  \end{equation*}
  for every $s\in Y$.

  \begin{Lemma}
    \label{l:factorization-of-psin}
  Consider an irreducible subshift $\Cl X$.    
    Let $u$ be a loop
    of  $\hat\Sigma_{2n}(\Cl X)$
    rooted at vertex $x_{[-n,n-1]}$.
    Then we have $\zeta_n(\hat h_n(u))=\hat\psi_n(u)$.
  \end{Lemma}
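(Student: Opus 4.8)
The plan is to prove the identity first for the finite loops of $\Sigma_{2n}(\Cl X)$ rooted at $v_0:=x_{[-n,n-1]}$ and then to extend it by density and continuity. Both $\hat\psi_n$ and $\zeta_n\circ\hat h_n$ restrict to continuous homomorphisms from the local semigroup $\hat\Sigma_{2n}(\Cl X,x)$ of loops at $v_0$ into the local group of $K_{2n+1}(\Cl X)_E$ at $e_n=e_{x_{[-n,n-1]},n}$: indeed $\hat\psi_n$ sends $v_0$-loops to $e_n$-loops by Lemma~\ref{l:the-image-is-in-the-groupoid}, while $\hat h_n$ sends them into $\hat\Pi_{2n}(\Cl X,x)$, on which $\zeta_n$ is defined. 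Since $\hat\Sigma_{2n}(\Cl X)$ is the profinite completion of $\Sigma_{2n}(\Cl X)^+$, the latter is dense, and as source and target take values in the finite vertex set, the finite loops $\Sigma_{2n}(\Cl X,x)^+$ are dense in $\hat\Sigma_{2n}(\Cl X,x)$. Hence it suffices to treat $u=s_1s_2\cdots s_k$, with each $s_i$ an edge of $\Sigma_{2n}(\Cl X)$.

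For such a finite loop, I would use the standard description of the fundamental group from the spanning tree $T$: telescoping the null-homotopic factors $\gamma_{\omega(s_i)}\delta_{\alpha(s_{i+1})}\sim1$ (here $\omega(s_i)=\alpha(s_{i+1})$ and $\gamma_v=\delta_v^{-1}$) together with $\delta_{\alpha(s_1)}=\gamma_{\omega(s_k)}=1_{v_0}$ yields
\[
  \hat h_n(u)=h_n(u)=\prod_{i=1}^{k} g_{s_i}
  \quad\text{in }\Pi_{2n}(\Cl X,x),
\]
where $g_{s_i}$ is the identity whenever $s_i\in T$. Applying the group homomorphism $\zeta_n$ then gives $\zeta_n(\hat h_n(u))=\prod_{i=1}^{k}\zeta_n(g_{s_i})$.

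The core of the argument is the parallel factorization $\hat\psi_n(u)=\prod_{i=1}^{k}\hat\psi_n\bigl(\delta_{\alpha(s_i)}^+\,s_i\,\gamma_{\omega(s_i)}^+\bigr)$ together with the identification of each factor with $\zeta_n(g_{s_i})$. For the factorization, I would expand the right-hand side, writing each term as $\hat\psi_n(\delta_{\alpha(s_i)}^+)\hat\psi_n(s_i)\hat\psi_n(\gamma_{\omega(s_i)}^+)$ and telescope: a consecutive pair $\hat\psi_n(\gamma_{\omega(s_i)}^+)\hat\psi_n(\delta_{\alpha(s_{i+1})}^+)$ collapses to the local identity at $e_{\omega(s_i),n}$, since $\gamma_v=\delta_v^{-1}$ and the image of $\hat\psi_n$ lies in the groupoid $K_{2n+1}(\Cl X)_E$, so that Lemma~\ref{l:an-exercise-on-hat} gives $\hat\psi_n(\gamma_v^+)=\hat\psi_n(\delta_v^+)^{-1}$; the two extreme factors are local units because $\delta_{v_0}$ and $\gamma_{v_0}$ are empty paths. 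What remains is $\prod_i\hat\psi_n(s_i)=\hat\psi_n(u)$. For the identification of factors, when $s_i\in Y$ the factor is $\zeta_n(g_{s_i})$ by the very definition of $\zeta_n$; when $s_i\in T$ the closed path $\delta_{\alpha(s_i)}\,s_i\,\gamma_{\omega(s_i)}$ lies in the tree $T$ and hence freely reduces to the empty path at $v_0$, and Lemma~\ref{l:an-exercise-on-hat} shows that $\hat\psi_n\bigl((\cdot)^+\bigr)$ is invariant under free reduction (a cancelled pair $t\,t^{-1}$ contributes $\hat\psi_n(t^+)\hat\psi_n(t^+)^{-1}$, a local identity), so the factor equals the local identity at $e_n=\zeta_n(g_{s_i})$. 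Combining these gives $\hat\psi_n(u)=\prod_i\zeta_n(g_{s_i})=\zeta_n(\hat h_n(u))$, as required.

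The step I expect to be the main obstacle is establishing, uniformly for tree and non-tree edges, the identity $\hat\psi_n(\delta_{\alpha(s)}^+\,s\,\gamma_{\omega(s)}^+)=\zeta_n(g_s)$: this is where the bookkeeping of the $(\cdot)^+$ and $(\cdot)^\ast$ conventions, the groupoid structure of $K_{2n+1}(\Cl X)_E$, and free reduction in $\widetilde\Sigma_{2n}(\Cl X)$ must be reconciled, all of it resting on Lemma~\ref{l:an-exercise-on-hat}. Everything else is routine telescoping together with the density argument of the first paragraph.
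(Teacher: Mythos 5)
Your proof is correct, and its skeleton --- reduction to finite loops by density and continuity, the spanning-tree description of the fundamental group, and telescoping cancellations inside the groupoid $K_{2n+1}(\Cl X)_E$ powered by Lemmas~\ref{l:the-image-is-in-the-groupoid} and~\ref{l:an-exercise-on-hat} --- is the same as the paper's. But the combinatorial decomposition genuinely differs. The paper factorizes the finite loop as $u = u_0 s_1 u_1 \cdots s_k u_k$ with $s_i \in Y$ and the $u_i$ (possibly empty) paths lying in~$T$, so that $\hat h_n(u) = g_{s_1}\cdots g_{s_k}$ involves only basis elements; the cost is that the cancellation between $\hat\psi_n(\gamma^+_{\omega(s_i)})$ and $\hat\psi_n(\delta^+_{\alpha(s_{i+1})})$ is only partial, and the paper handles it by splitting off the longest common prefix $w_i$ of $\gamma_{\omega(s_i)}^{-1}$ and $\delta_{\alpha(s_{i+1})}$, writing $\gamma_{\omega(s_i)} = z_i w_i^{-1}$, $\delta_{\alpha(s_{i+1})} = w_i t_i$, noting $u_i = z_i t_i$, and cancelling $\hat\psi_n\bigl((w_i^{-1})^+\bigr)\hat\psi_n(w_i^+)$. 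You instead treat every edge of the loop uniformly, writing $\hat h_n(u) = \prod_i g_{s_i}$ over all edges (tree edges contributing identities); then the cancellations $\hat\psi_n(\gamma_v^+)\hat\psi_n(\delta_v^+)$ are complete, since consecutive edges share a vertex, which is cleaner --- but you must pay with a step the paper never needs: showing that for a tree edge $s$ the factor $\hat\psi_n\bigl(\delta^+_{\alpha(s)}\, s\, \gamma^+_{\omega(s)}\bigr)$ is the local identity at $e_n$, because the defining formula for $\zeta_n$ is only stipulated on $Y$ while $\zeta_n(g_s)$ is trivially the identity. Your free-reduction-invariance argument for that step is sound and rests again on Lemma~\ref{l:an-exercise-on-hat} together with the groupoid structure of the image. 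Net effect: your route avoids the $w_i, z_i, t_i$ bookkeeping (and the implicit observation that a directed path lying in $T$ is the reduced tree geodesic between its endpoints), at the price of the tree-edge verification; both proofs are of comparable length and rest on exactly the same supporting lemmas.
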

  
  \begin{proof}
    Since we are dealing with finite-vertex graphs,
    we have $\ov{\Sigma_{2n}(\Cl X)^+}=\hat\Sigma_{2n}(\Cl X)$.
    And since the vertex space of  $\hat\Sigma_{2n}(\Cl X)$
    is discrete,  it follows that any loop
    of  $\hat\Sigma_{2n}(\Cl X)$ rooted at $x_{[-n,n-1]}$
    is the limit of a net of finite loops of
    $\hat\Sigma_{2n}(\Cl X)$ rooted at $x_{[-n,n-1]}$.
    Hence, since $\zeta_n\circ\hat h_n$
    and $\hat\psi_n$ are continuous, the lemma is proved
    once we show  that the equality
    $\zeta_n(\hat h_n(u))=\hat\psi_n(u)$  holds whenever $u$
    is a finite loop rooted at $x_{[-n,n-1]}$.
    For such a finite loop $u$,
    let
  \begin{equation*}
  u=u_0s_1u_1s_2u_2\cdots u_{k-1}s_ku_k    
  \end{equation*}
  be a factorization in $\Sigma_{2n}(\Cl X)^+$ such that $u_0,\ldots,
  u_k$ are (possibly empty) paths that lie in~$T$ and $s_1,\ldots,s_k$
  are edges belonging to $Y$. Let $w_i$ be the longest common prefix
  of $\gamma_{\omega(s_i)}^{-1}$ and $\delta_{\alpha(s_{i+1})}$ and let
  $z_i$ and $t_i$ be such that the equalities
  $\gamma_{\omega(s_i)}=z_iw_i^{-1}$ and
  $\delta_{\alpha(s_{i+1})}=w_it_i$ hold in~$\tilde{\Sigma}_{2n}(\Cl X)$.
  Note that
\begin{equation}\label{eq:delta-gamma-u}
  u_0=\delta_{\alpha(s_1)},\;
  u_k=\gamma_{\omega(s_k)},\;\;
  \text{and}\;\;
  u_i=z_it_i
  \quad\text{for $i\in \{1,\ldots,k-1\}$}.
\end{equation}
  It follows that
  \begin{equation*}
    \hat h_n(u)=g_{s_1}g_{s_2}\cdots g_{s_{k}}
  \end{equation*}
  and so
  \begin{equation}\label{eq:zeta_n-h_n}
    \zeta_n(\hat h_n(u))=
\hat\psi_n
\Bigl(
\delta_{\alpha(s_1)}^+\cdot s_1\cdot \gamma_{\omega(s_1)}^+
\cdot
\delta_{\alpha(s_2)}^+\cdot s_2\cdot \gamma_{\omega(s_2)}^+
\cdots
\delta_{\alpha(s_k)}^+\cdot s_k\cdot \gamma_{\omega(s_k)}^+
\Bigr).
\end{equation}
On the other hand, by \eqref{eq:delta-gamma-u},
we have $\delta_{\alpha(s_1)}^+=u_0$, $\gamma_{\omega(s_k)}^+=u_k$
and, in view of Lemmas~\ref{l:the-image-is-in-the-groupoid}
and~\ref{l:an-exercise-on-hat}, for $i\in \{1,\ldots,k-1\}$, the
following chain of equalities holds:
\begin{align*}
  \hat\psi_n(\gamma_{\omega(s_{i})}^+
  \cdot\delta_{\alpha(s_{i+1})}^+)
  &=\hat\psi_n\bigl(z_i^+(w_i^{-1})^+\cdot w_i^+t_i^+\bigr) \\
  &=\hat\psi_n(z_i^+)\cdot
  \hat\psi_n\bigl((w_i^{-1})^+\bigr)\cdot\hat\psi_n(w_i^+)\cdot
  \hat\psi_n(t_i^+) \\
  &=\hat\psi_n(z_i^+)\cdot
  \hat\psi_n(w_i^+)^{-1}\cdot\hat\psi_n(w_i^+)\cdot
  \hat\psi_n(t_i^+) \\
  &=\hat\psi_n(z_i^+)\cdot
  \hat\psi_n(t_i^+)
  = \hat\psi_n(u_{i}).
\end{align*}
Therefore, \eqref{eq:zeta_n-h_n} simplifies to
$\zeta_n(\hat h_n(u))=\hat\psi_n(u)$, as we wished to show.
  \end{proof}

  \begin{Thm}\label{t:the-geometric-interpretation}
    Let $\Cl X$ be a minimal subshift. Then, the restriction of the
    mapping $\hat{h}$ to $\hat{\Sigma}_\infty(\Cl X)$ is an
    isomorphism of topological groupoids onto
    $\varprojlim\hat\Pi_{2n}(\Cl X)$.
  \end{Thm}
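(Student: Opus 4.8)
The plan is to apply the same criterion used at the end of the proof of Theorem~\ref{t:globalized-version-of-labeling-isomorphism}: a continuous functor between connected topological groupoids that is bijective on vertices and restricts to an isomorphism between one pair of local groups is an isomorphism of topological groupoids. Both $\hat\Sigma_\infty(\Cl X)$ (by Theorem~\ref{t:sigma-infty-is-a-connected-groupoid}) and $\varprojlim\hat\Pi_{2n}(\Cl X)$ are connected groupoids, the latter because between any two of its vertices the inverse limit of the nonempty compact sets of connecting edges in the $\hat\Pi_{2n}(\Cl X)$ is nonempty. Moreover $\hat h$ restricts to a continuous groupoid homomorphism $\hat\Sigma_\infty(\Cl X)\to\varprojlim\hat\Pi_{2n}(\Cl X)$, a semigroupoid homomorphism out of a groupoid automatically preserving inverses. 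On vertices, $\hat h$ is the identification $\Cl X=\varprojlim L_{2n}(\Cl X)$ of the two vertex sets, hence bijective. Fixing $x\in\Cl X$, it then remains to prove that the restriction $\hat h|\colon\hat\Sigma_\infty(\Cl X,x)\to\varprojlim\hat\Pi_{2n}(\Cl X,x)$ is an isomorphism, the target being the local group of $\varprojlim\hat\Pi_{2n}(\Cl X)$ at $x$ (the local group of an inverse limit of groupoids being the inverse limit of the local groups, exactly as for $\hat\Sigma(\Cl X,x)=\varprojlim\hat\Sigma_{2n}(\Cl X,x)$).

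For injectivity, the key is that $\Psi_n$ factors through $\hat h$. Indeed, for $s\in\hat\Sigma_\infty(\Cl X,x)$ the path $\hat p_n(s)$ is a loop of $\hat\Sigma_{2n}(\Cl X)$ at $x_{[-n,n-1]}$, so Lemma~\ref{l:factorization-of-psin} gives $\hat\psi_n(\hat p_n(s))=\zeta_n(\hat h_n(\hat p_n(s)))$, whence $\Psi_n(s)=\gamma(\zeta_n(\hat h_n(\hat p_n(s))))$ depends only on the $n$th component $\hat h_n(\hat p_n(s))$ of $\hat h(s)$. Thus, if $s,s'\in\hat\Sigma_\infty(\Cl X,x)$ satisfy $\hat h(s)=\hat h(s')$, then $\Psi_n(s)=\Psi_n(s')$ for every $n$, and by Proposition~\ref{p:Psi-n-converges-unif-to-Psi} we obtain $\Psi(s)=\Psi(s')$. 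Since $s$ and $s'$ are infinite edges, Lemma~\ref{l:what-Psi-is} yields $\hat\mu(s)=\Psi(s)=\Psi(s')=\hat\mu(s')$; as $s$ and $s'$ are coterminal and $\hat\mu$ is faithful, $s=s'$.

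For surjectivity, note first that $\hat h_n$ is onto by Lemma~\ref{l:projection-onto-fundamental-groupoid-for-strongly-connected-semigroupoids}, and since it preserves vertices its restriction $\hat\Sigma_{2n}(\Cl X,x)\to\hat\Pi_{2n}(\Cl X,x)$ is onto as well. Given $g=(g_n)_n\in\varprojlim\hat\Pi_{2n}(\Cl X,x)$, the sets $T_n=\{t\in\hat\Sigma_{2n}(\Cl X,x)\mid\hat h_n(t)=g_n\}$ are nonempty and compact and satisfy $\hat p_{m,n}(T_m)\subseteq T_n$, so $\varprojlim T_n\neq\emptyset$ produces $t\in\hat\Sigma(\Cl X,x)$ with $\hat h(t)=g$. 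Now $\ell_x t\,\ell_x$ is a loop at $x$, and it is an infinite edge because the infinite edges form an ideal of $\hat\Sigma(\Cl X)$: an edge is infinite precisely when its faithful image under $\hat\mu$ lies in the ideal $\Om AS\setminus A^+$ (Lemma~\ref{l:vertices-eti}). Hence $\ell_x t\,\ell_x\in\hat\Sigma_\infty(\Cl X,x)$, and since $\hat h(\ell_x)$ is an idempotent loop of the groupoid $\varprojlim\hat\Pi_{2n}(\Cl X)$, hence the local identity at $x$, we get $\hat h(\ell_x t\,\ell_x)=\hat h(\ell_x)\,g\,\hat h(\ell_x)=g$. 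This completes the proof that $\hat h|$ is an isomorphism of local groups, and therefore, by the criterion above, an isomorphism of topological groupoids.

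The main obstacle is arranging surjectivity uniformly in the periodic and non-periodic cases: when $\Cl X$ is periodic, $\hat\Sigma_\infty(\Cl X,x)$ is a proper subset of $\hat\Sigma(\Cl X,x)$, so a lift $t$ obtained from the $\hat h_n$ need not a priori lie in $\hat\Sigma_\infty(\Cl X)$; it is the ideal property of the infinite edges, used to pass from $t$ to $\ell_x t\,\ell_x$ without altering the $\hat h$-image, that resolves this. The remaining delicate point is the bookkeeping behind injectivity, namely the factorization $\Psi_n(s)=\gamma(\zeta_n(\hat h_n(\hat p_n(s))))$ combined with the convergence $\Psi_n\to\Psi$, which together transport the faithfulness of $\hat\mu$ into the injectivity of $\hat h|$.
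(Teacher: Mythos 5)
Your proof is correct, and its hard core coincides with the paper's: injectivity is extracted from the factorization of $\Psi_n$ through $\hat h$ via Lemma~\ref{l:factorization-of-psin}, the convergence $\Psi_n\to\Psi$ of Proposition~\ref{p:Psi-n-converges-unif-to-Psi}, the identity $\Psi=\hat\mu$ on infinite edges (Lemma~\ref{l:what-Psi-is}), and the faithfulness of $\hat\mu$; surjectivity comes from each $\hat h_n$ being onto (Lemma~\ref{l:projection-onto-fundamental-groupoid-for-strongly-connected-semigroupoids}), a compactness argument on inverse limits, and composition with the idempotent loop $\ell_x$ to correct a lift lying in $\hat\Sigma(\Cl X)$ into one lying in $\hat\Sigma_\infty(\Cl X)$ --- your fiberwise argument with the sets $T_n$ is exactly what the paper's citation of Willard encapsulates, and your appeal to the ideal property of infinite edges (via Lemma~\ref{l:vertices-eti}) matches the paper's use of $\ell_{\alpha(s)}s$. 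Where you genuinely diverge is the globalization strategy. The paper proves bijectivity of the restriction of $\hat h$ on the whole groupoid at once: for injectivity it takes arbitrary coterminal edges $s,t\in\hat\Sigma_\infty(\Cl X)$ with $\hat h(s)=\hat h(t)$ and replaces them by the loops $ss^{-1}$ and $ts^{-1}$, to which Lemma~\ref{l:factorization-of-psin} applies. You instead prove that $\hat h$ restricts to an isomorphism between the local groups at one basepoint $x$ and to a bijection between the vertex sets, and then invoke the criterion quoted at the end of the proof of Theorem~\ref{t:globalized-version-of-labeling-isomorphism} (a functor between connected groupoids bijective on vertices and on one pair of local groups is an isomorphism). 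This buys a cleaner injectivity step (elements of the local group are already loops at $x$, so no $ss^{-1}$ manipulation is needed) and puts the emphasis on the local statement that becomes Corollary~\ref{c:the-geometric-interpretation-reformulation}; the price is that you must verify connectivity of $\varprojlim\hat\Pi_{2n}(\Cl X)$ (your inverse-limit-of-nonempty-compact-edge-sets argument is fine) and rely on Theorem~\ref{t:sigma-infty-is-a-connected-groupoid} for connectedness of the source, inputs the paper's direct route does not need. One cosmetic point: the quoted criterion yields an abstract groupoid isomorphism, so to get an isomorphism of \emph{topological} groupoids, as the statement requires, you should add the standard remark that a continuous bijective homomorphism between compact Hausdorff groupoids has continuous inverse; the same remark is left implicit in the paper's proof.
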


  \begin{proof}
    By
    Lemma~\ref{l:projection-onto-fundamental-groupoid-for-strongly-connected-semigroupoids}, $\hat h_n$~is onto
    for every $n\geq 1$,
    which shows that $\hat h$ is
    onto (cf.~\cite[Theorem 29.13]{Willard:1970}).
    Therefore, by Theorem~\ref{t:free-profinite-semigroupoids-in-minimal-case},
    the equality
    $\hat h(\hat\Sigma(\Cl X))=\varprojlim\hat\Pi_{2n}(\Cl X)$ holds.
     If $s$ is a finite edge in
     $\hat\Sigma(\Cl X)$, then $\ell_{\alpha(s)}s$
     is an edge in $\hat\Sigma_\infty(\Cl X)$
     such that $\hat h(\ell_{\alpha(s)}s)=\hat h(s)$,
     whence
     $\hat h(\hat\Sigma_\infty(\Cl X))=\varprojlim\hat\Pi_{2n}(\Cl X)$.
    
    Let $s,t$ be elements of $\hat\Sigma_\infty(\Cl X)$
    such that $\hat h(s)=\hat h(t)$. Since $\hat{h}$ is the identity
    mapping on vertices, we may assume that $s$ and $t$ are edges and,
    therefore, they are coterminal edges.
    Then, for every $n\geq 1$, we have
    $\hat h_n(\hat p_n(ss^{-1}))=\hat h_n(\hat p_n(ts^{-1}))$, and so
    from Lemma~\ref{l:factorization-of-psin}
    we deduce the equality
    \begin{equation*}
      \hat \psi_n(\hat p_n(ss^{-1}))=\hat \psi_n(\hat p_n(ts^{-1})).
    \end{equation*}
    This shows that $\Psi_n(ss^{-1})=\Psi_n(ts^{-1})$
    every $n\geq 1$.
    From
    Proposition~\ref{p:Psi-n-converges-unif-to-Psi}
    we then obtain $\Psi(ss^{-1})=\Psi(ts^{-1})$.
    By Lemma~\ref{l:what-Psi-is}, this means that
    $\hat\mu(ss^{-1})=\hat\mu(ts^{-1})$.
    Since $\hat\mu$ is faithful,
    we conclude that $ss^{-1}=ts^{-1}$, whence $s=t$. This establishes
    that $\hat h$ is injective.
  \end{proof}

  In view of Theorem~\ref{t:isomorphism}, we may now obtain our main
  result as an immediate consequence of
  Theorem~\ref{t:the-geometric-interpretation}.

  \begin{Cor}\label{c:the-geometric-interpretation-reformulation}
    If $\Cl X$ is a minimal subshift
    then $G(\Cl X)$ is isomorphic with $\varprojlim\hat\Pi_{2n}(\Cl X,x)$
    as a profinite group, for every $x\in\Cl X$.\qed
  \end{Cor}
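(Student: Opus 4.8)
The plan is to obtain the stated isomorphism by chaining together the two main theorems of this section with the very definition of $G(\Cl X)$. First I would recall that, by Lemma~\ref{l:parametrization-of-R-L-classes}, for each $x\in\Cl X$ the $\Cl H$-class $G_x$ is a maximal subgroup of $J(\Cl X)$, so that $G(\Cl X)$, being the abstract profinite group underlying those (mutually isomorphic) maximal subgroups, is isomorphic to $G_x$ for every $x$. Thus it suffices to produce, for an arbitrary fixed $x\in\Cl X$, an isomorphism $G_x\cong\varprojlim\hat\Pi_{2n}(\Cl X,x)$ of profinite groups.

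The middle link of the chain is supplied by Theorem~\ref{t:isomorphism}: the restriction $\hat\lambda|\colon G_x\to\hat\Sigma_\infty(\Cl X,x)$ is an isomorphism of profinite groups onto the local group of $\hat\Sigma_\infty(\Cl X)$ at the vertex $x$. It then remains to transport this local group across the groupoid isomorphism of Theorem~\ref{t:the-geometric-interpretation}. Since $\hat h|\colon\hat\Sigma_\infty(\Cl X)\to\varprojlim\hat\Pi_{2n}(\Cl X)$ is an isomorphism of topological groupoids which is the identity on vertices (under the identification $\Sigma(\Cl X)=\varprojlim\Sigma_{2n}(\Cl X)$ of the common vertex set with $\Cl X$), it restricts to a topological isomorphism from the local group at $x$ of the source groupoid onto the local group at $x$ of the target groupoid. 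Composing with $\hat\lambda|$ then yields a continuous isomorphism from $G_x$ onto the local group of $\varprojlim\hat\Pi_{2n}(\Cl X)$ at~$x$.

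The one point I would check carefully, and which I expect to be the only real (if minor) obstacle, is the identification of the local group of the inverse-limit groupoid $\varprojlim\hat\Pi_{2n}(\Cl X)$ at the vertex $x$ with the inverse limit $\varprojlim\hat\Pi_{2n}(\Cl X,x)$ of the local groups. Because the bonding morphisms $\hat q_{m,n}$ are groupoid homomorphisms sending the vertex $x_{[-m,m-1]}$ to $x_{[-n,n-1]}$, they restrict to continuous homomorphisms $\hat\Pi_{2m}(\Cl X,x)\to\hat\Pi_{2n}(\Cl X,x)$ between the corresponding local groups, forming an inverse subsystem; and a loop at $x$ in the inverse limit is exactly a compatible family of loops at the basepoints $x_{[-n,n-1]}$, so that taking the local group at a compatible choice of basepoint commutes with the inverse limit. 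With this identification in hand, the composite isomorphism $G_x\cong\varprojlim\hat\Pi_{2n}(\Cl X,x)$ is immediate, and since $G(\Cl X)\cong G_x$ for every $x\in\Cl X$, this establishes the stated isomorphism.
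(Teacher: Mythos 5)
Your proposal is correct and takes essentially the same route as the paper, which obtains the corollary precisely by composing the isomorphism $G_x\cong\hat\Sigma_\infty(\Cl X,x)$ of Theorem~\ref{t:isomorphism} with the groupoid isomorphism of Theorem~\ref{t:the-geometric-interpretation}, using that the latter is the identity on vertices. The one detail you spell out that the paper leaves implicit---identifying the local group at $x$ of $\varprojlim\hat\Pi_{2n}(\Cl X)$ with $\varprojlim\hat\Pi_{2n}(\Cl X,x)$---is exactly the right point to check, and your justification of it is sound.
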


\bibliographystyle{amsplain}

\bibliography{sgpabb,ref-sgps}

\end{document}